\newcommand{\beq}{\begin{equation}}
\newcommand{\eeq}{\end{equation}}
\theoremstyle{plain}
\newtheorem{theorem}{Theorem}[section]
\newtheorem{proposition}[theorem]{Proposition}
\newtheorem{lemma}[theorem]{Lemma}
\theoremstyle{definition}
\newtheorem{remark}[theorem]{Remark}
\numberwithin{equation}{section}
\begin{document}

\title{Future stability of perfect fluids with extreme tilt and linear equation of state $p=c_s^2\rho$ for the Einstein-Euler system with positive cosmological constant: The range $\frac{1}{3}<c_s^2<\frac{3}{7}$}

\author{Grigorios Fournodavlos,\footnote{Department of Mathematics \& Applied Mathematics, University of Crete, Voutes Campus, 70013 Heraklion, Greece, gfournodavlos@uoc.gr.} \footnote{Institute of Applied and Computational Mathematics,
FORTH, 70013 Heraklion, Greece.}\;\; Elliot Marshall,\footnote{School of Mathematics,
9 Rainforest Walk,
Monash University, VIC 3800, Australia, elliot.marshall@monash.edu.}\;\; Todd A. Oliynyk\footnote{School of Mathematics,
9 Rainforest Walk,
Monash University, VIC 3800, Australia, todd.oliynyk@monash.edu.}}

\date{}

\maketitle

\begin{abstract}
We study the future stability of cosmological fluids, in spacetimes with an accelerated expansion, which exhibit extreme tilt behavior, ie. their fluid velocity becoming asymptotically null at timelike infinity. It has been predicted in the article \cite{LEUW} that the latter behavior is dominant for sound speeds beyond radiation $c_s=1/\sqrt{3}$, hence, bifurcating off of the stable orthogonal fluid behavior modeled by the classical FLRW family of solutions, for $c_s^2\in[0,\frac{1}{3}]$. First, we construct homogeneous solutions to the Einstein-Euler system with the latter behavior, in $\mathbb{S}^3$ spatial topology, for sound speeds $c_s^2\in(\frac{1}{3},1)$. Then, we study their future dynamics and prove a global stability result in the restricted range $c_s^2\in(\frac{1}{3},\frac{3}{7})$. In particular, we show that extreme tilt behavior persists to sufficiently small perturbations of the homogeneous backgrounds, without any symmetry assumptions or analyticity. Our method is based on a bootstrap argument, in weighted Sobolev spaces, capturing the exponential decay of suitable renormalized variables. Extreme tilt behavior is associated with a degeneracy in the top order energy estimates that we derive, which allows us to complete our bootstrap argument only in the aforementioned restricted range of sound speeds. Interestingly, this is a degeneracy that does not appear in the study of formal series expansions. Moreover, for the Euler equations on a fixed FLRW background, our estimates can be improved to treat the entire beyond radiation interval $c_s^2\in(\frac{1}{3},1)$, a result already obtained in \cite{MO}. The latter indicates that the former issue is related to the general inhomogeneous geometry of the perturbed metric in the coupled to Einstein case.
\end{abstract}

\tableofcontents

\parskip = 0 pt


%
\section{Introduction}

The standard model in cosmology is the Einstein-Euler system with a positive cosmological constant:
\begin{align}
\label{EE}
{\bf Ric}_{\mu\nu}-\frac{1}{2}{\bf g}_{\mu\nu}{\bf R}+\Lambda {\bf g}_{\mu\nu}=&\,T_{\mu\nu},\\
\label{eq.motion}{\bf D}^\mu T_{\mu\nu}=&\,0,
\end{align}
where $T_{\mu\nu}$ is the energy momentum tensor of a perfect fluid,
\begin{align}
\label{Tmunu}T_{\mu\nu}=(\rho+p)u_\mu u_\nu +p{\bf g}_{\mu\nu},
\end{align}
with linear equation of state $p=c_s^2\rho$ and sound speed in the interval $c_s\in[0,1]$. Different values of $c_s$ are used to model different states of matter in the universe, e.g. dust $c_s=0$, radiation $c_s=\frac{1}{\sqrt{3}}$, stiff $c_s=1$. Here, $p,\rho$ are the pressure and density respectively, $u$ is the unit fluid velocity, ${\bf g}(u,u)=-1$, and we have chosen geometric units so that in particular the speed of light is 1.

It is convenient to consider the renormalized fluid speed 
\begin{align}\label{v.rs}
v_\mu = \rho^{r_s}u_\mu,\qquad r_s=\frac{c_s^2}{1+c^2_s}\in[0,\frac{1}{2}]. 
\end{align}
Then, the equations of motion \eqref{eq.motion} read (Lemma \ref{lem:eq.motion}): 
\begin{align}
\label{divT.v}({\bf D}_v v)_\nu+\frac{1}{2}{\bf D}_\nu(\rho^{2r_s})=&\,0,\\
\label{divT.rho}(1-2r_s){\bf D}_v\log\rho+{\bf D}^\mu v_\mu=&\,0.
\end{align}

The initial data for \eqref{EE}, \eqref{divT.v}, \eqref{divT.rho} on a Cauchy hypersurface $\Sigma$ consist of ($\mathring{g},\mathring{k},\mathring{\rho},\mathring{\underline{v}},\mathring{v}_0$), satisfying the constraint equations: 
\begin{align}
\label{Hamconst.intro}\mathring{R}-|\mathring{k}|^2+(\text{tr}\mathring{k})^2=&\,2\Lambda+2(1+c_s^2)\mathring{\rho}^{1-2r_s}\mathring{v}_0^2-c_s^2\mathring{\rho},\\
\label{momconst.intro}\mathring{\text{div}}\mathring{k}-\text{d}\text{tr}\mathring{k}=&-(1+c_s^2)\mathring{\rho}^{1-2r_s}\mathring{v}_0\mathring{\underline{v}},
\end{align}
and the identity $\mathring{v}_0^2=\mathring{g}(\mathring{\underline{v}},\mathring{\underline{v}})+\mathring{\rho}^{2r_s}$.
It is well-known that the above system of equations is locally well-posed (see \cite[Section 3.2]{Sp1} for a detailed discussion), giving rise to a development $(\mathcal{M},{\bf g},\rho,v)$.

\subsection{The space of homogeneous solutions: Orthogonal vs tilted fluids, radiation bifurcation}\label{SS:hom.class}

There are numerous papers in the physics literature \cite{CW,GN,GE,MMBE,ME,RM,SR,W} that study the space of homogeneous solutions to the Einstein-Euler system \eqref{EE}-\eqref{momconst.intro}. Most relevant to the present paper is the work \cite{GE} (see also \cite{GN,RM}), where the authors, using dynamical systems techniques, notice the following classification of homogeneous solutions in various Bianchi symmetry classes (for the whole picture below to be valid, a certain number of non-trivial structure coefficients are required): 
\begin{enumerate}
\item The metric approaches a de Sitter-like state\label{class:1}
\begin{align}\label{gen.hom.metric.intro}
{\bf g}=-dt^2+g_{ij}dx^idx^j,\qquad\qquad (t,x)\in [T,+\infty)\times\Sigma\cong\mathcal{M}.
\end{align}
That is to say, the rate of expansion of the universe, $g_{ij}\sim e^{2Ht}c_{ij}$, is uniform in all spatial directions and is dictated by the Hubble constant, equal to leading order to $H=\sqrt{\Lambda/3}$.

\item \label{class:2} Asymptotically orthogonal fluids, $0\leq c_s< 1/\sqrt{3}$: For sound speeds below radiation, the fluid velocity becomes asymptotically orthogonal to the level sets of $t$, ie. $u=-u_0\partial_t+u_Ie_I$ with $u_0\rightarrow-1$, $u_I\rightarrow0$, as $t\rightarrow+\infty$, where $g(e_I,e_J)=\delta_{IJ}$.

\item Tilted fluids, $c_s=1/\sqrt{3}$: The fluid velocity can pick up a `tilt' at infinity, relative to the level sets of $t$, while all components of the fluid remain bounded, ie. there exist constants $u^\infty_0,u^\infty_I$, such that $u_0\rightarrow u^\infty_0$, $u_I\rightarrow u^\infty_I$, as $t\rightarrow+\infty$, where $u=-u_0\partial_t+u_Ie_I$ and $g(e_I,e_J)=\delta_{IJ}$.

\item  
\label{class:4} Fluids with extreme tilt, $1/\sqrt{3}<c_s<1$: The components of the fluid velocity become unbounded, while the fluid vector field $u\in T\mathcal{M}$ approaches the null cone at infinity. More precisely, there exist constants $u_0^\infty,u_I^\infty$ such that 
\begin{align}\label{gen.hom.tilt.intro}
e^{-A_sHt}u_0\overset{t\rightarrow+\infty}{\longrightarrow} u^\infty_0,\qquad e^{-A_sHt}u_I\overset{t\rightarrow+\infty}{\longrightarrow} u^\infty_I,\qquad A_s=\frac{3c_s^2-1}{1-c_s^2},\qquad(u^\infty_0)^2=u^\infty_Iu^\infty_I,
\end{align}
where $u=-u_0\partial_t+u_Ie_I$ and $g(e_I,e_J)=\delta_{IJ}$.
Note that $A_s>0$, for $c_s^2\in(\frac{1}{3},1)$. In other words, the leading order behavior of $u$ is a null vector field. 

\item \label{class:5} The density $\rho$ decays exponentially in all above cases, as $t\rightarrow+\infty$, at a rate which depends on $c_s$, compensating especially in case \ref{class:4} for the exponential growth of the fluid components $u_0,u_I$. 
\end{enumerate}
An immediate observation from the above classification is that there is a bifurcation phenomenon, regarding the behavior of the fluid at infinity, relative to the different sound speeds at exactly the radiation case $c_s=1/\sqrt{3}$. Of course, this is a behavior exhibited by homogeneous solutions and the natural question that arises is whether the latter phenomenon persists to inhomogeneous solutions.

\subsection{Analytic solutions without symmetries: Stability/instability predictions}

In \cite{LEUW}, the authors carried out a heuristic analysis of general, inhomogeneous, solutions to the Einstein-Euler system \eqref{EE}-\eqref{momconst.intro} based on formal series expansions from infinity. They concluded that the behavior of the fluid for the different sound speeds is consistent with the picture \ref{class:1}-\ref{class:5} in the homogeneous case, the difference being that the asymptotic data at infinity are now functions instead of constants, e.g. $c_{ij}:=c_{ij}(x)$, $u^\infty_0=u^\infty_0(x)$ etc. In particular, extreme tilt behavior \ref{class:4}, for sound speeds in the range $1/\sqrt{3}<c_s<1$, persists to inhomogeneous solutions enjoying all degrees of freedom, in a function counting sense. The latter is an indication that the obtained qualitative picture is generic. 

A rigorous construction of analytic solutions, without symmetries, was subsequently achieved by Rendall \cite{Ren} using Fuchsian techniques. Once more, the behavior of solutions is determined in detail by expansions from infinity to any order, in the spirit of \cite{LEUW}, containing all functional degrees of freedom. The only deviation from the heuristic picture becomes apparent in the range $1/\sqrt{3}<c_s<1$, for solutions having extreme tilt. In the latter regime, Rendall's result \cite{Ren} only applies to asymptotic data having non-vanishing spatial velocity, ie. $u^\infty_I(x)u^\infty_I(x)\neq0$. In the presence of a vanishing point, he predicted the blowup of the density contrast $\nabla\log\rho$, and hence a form of breakdown in the fluid, which in his words ``is reminiscent of spikes'' that are observed in Big Bang singularities. Numerical support for Rendall's breakdown prediction was recently given in \cite{BMO}, and rigorously proven in \cite{Ol3}, for a restricted range of sound speeds beyond radiation, under symmetry assumptions, albeit for the relativistic Euler equations on a fixed FLRW background and not coupled to Einstein. An interesting feature of the inhomogenous solutions of the relativistic Euler equations from \cite{Ol3} is that they behave like orthogonal fluids at points where the spatial velocity vanishes at timelike infinity while at points with non-vanishing spatial velocity they exhibit extreme tilt.

\subsection{Future stability of FLRW: The range up to radiation $0\leq c_s\leq 1/\sqrt{3}$}

The current work concerns the future stability of certain homogeneous background solutions in finite regularity spaces. To that extent, the simplest background solution one can consider is a member of the FLRW family of solutions. This is an orthogonal fluid and according to the predictions from the study of formal series expansions, it can only be expected to be stable for sound speeds in the range $c_s^2\in[0,\frac{1}{3}]$. 

Indeed, future stability of FLRW was first established for $c_s^2\in(0,\frac{1}{3})$ in the irrotational case \cite{RS} and subsequently in the presence of vorticity \cite{Sp1}, showing in particular that the perturbed solution becomes asymptotically orthogonal as well. Remarkably, the latter works demonstrated that an accelerated expansion can silence fluid degeneracies, like shocks, which are in general expected to occur otherwise.\footnote{See for example Christodoulou's breakthrough work \cite{Christ} on the formation of shocks for the relativistic Euler equations in Minkowski spacetime.}\footnote{Fluid stabilisation, as an effect of an accelerated expansion, was first rigorously established by Brauer-Rendall-Reula \cite{BRR} for Newtonian cosmological models.} The dust case $c_s=0$ was treated in \cite{HS}, where the fluid is asymptotically orthogonal as well, and the radiation case $c_s^2=\frac{1}{3}$ was treated via conformal methods \cite{LK}, where as in the heuristic picture, the fluid can pick up a slight tilt at infinity. An alternative approach that provides a uniform treatment of future stability in the parameter range $0<c_s^2\leq \frac{1}{3}$ can be found in \cite{Ol1}. For fluid stabilisation in higher dimensions, see the more recent \cite{M}.

\subsection{Main results: Going beyond the radiation threshold $c_s>1/\sqrt{3}$}

For sound speeds above radiation, ie. $c_s>1/\sqrt{3}$, FLRW is not the right background to consider, since the heuristic analysis \cite{LEUW} suggests that extreme tilt should be the dominant behavior, a feature qualitatively different from FLRW. In order to study the future stability of extreme tilt behavior in fluids, it is appropriate to consider homogeneous backgrounds that have this property.

\subsubsection{The tilted homogeneous backgrounds}

To allow for extreme tilt behavior in homogeneous solutions, a certain number of non-trivial structure coefficients and anisotropy is needed to satisfy the constraints. We find it convenient to look at homogeneous background metrics of the form 
\begin{align}\label{metric.hom}
\widetilde{\bf g}=-dt^2+\sum_{i=1}^3G^2_i(t)\psi^i\otimes \psi^i+G^2(t)(\psi^2\otimes \psi^3+\psi^3\otimes \psi^2),
\end{align}
defined over $[T,+\infty)\times\mathbb{S}^3$,
where $\psi^i$ are the 1-forms dual to the non-holonomic basis of spatial vector fields $Y_i$ defined in \eqref{Yi}.

Also, we assume that the homogeneous fluid speeds $\widetilde{u}$ are tilted only relative to the first spatial coordinate: 
\begin{align}\label{fluid.hom}
\widetilde{u}=\cosh(\theta(t))\partial_t+\sinh(\theta(t))G_1^{-1}(t)Y_1.
\end{align}
The existence of such solutions, in the range $c_s^2\in(\frac{1}{3},1)$, is obtained by an ODE analysis at infinity (see Appendix \ref{app:ODE}). In other words, a backwards construction, assuming that the metric \eqref{metric.hom} asymptotically isotropizes:
\begin{align}\label{metric.lead}
G_i^2(t)= (G_i^\infty)^2e^{2Ht}+\text{l.o.t.}\qquad G^2(t)=\text{l.o.t.},
\end{align}
for some constants $G_i^\infty>0$. $T>0$ is some sufficiently large time given by the interval of existence of the solution to the homogeneous Einstein-Euler system.

An important feature of the fluids that we consider is that both components of the fluid speed \eqref{fluid.hom} grow exponentially, while the fluid speed itself becomes asymptotically null, as $t\rightarrow+\infty$, in the sense that
\begin{align}\label{fluid.lead.null}
\cosh(\theta(t))= a_0 e^{\frac{3c_s^2-1}{1-c_s^2}Ht}+\text{l.o.t.},\qquad\cosh(\theta(t))= a_1 e^{\frac{3c_s^2-1}{1-c_s^2}Ht}+\text{l.o.t.},\qquad a_0^2=a_1^2\neq0,
\end{align}
for $\frac{1}{\sqrt{3}}<c_s<1$. The growth of the fluid components is compensated by the exponential decay of the density:
\begin{align}\label{rho.hom}
\widetilde{\rho}(t)= \widetilde{\rho}^\infty e^{-2\frac{1+c_s^2}{1-c_s^2}Ht}+\text{l.o.t.}
\end{align}
The precise estimates satisfied by the above variables and their derivatives are stated in Lemma \ref{lem:hom.est}, and derived in Appendix \ref{app:ODE}.

\subsubsection{Brief framework}\label{sss:framework}

We express all spacetime metrics in this article 
relative to a coframe, propagated along the gradient of a time function $t\in[T,+\infty)$: 
\begin{align}\label{metric}
{\bf g}=-n^2dt^2+g_{ij}\psi^i\psi^j.
\end{align}
Here $n$ is the lapse of the $t$-foliation, $\Sigma_t\cong\mathbb{S}^3$, the shift vector field is set to zero, and $g$ is the induced metric on $\Sigma_t$. When we compare variables associated to different metrics, e.g. a homogeneous solution and a perturbation of the latter, we use the natural identification of points in the different manifolds relative to the above splitting. In this regard, the time function of the perturbed solution is fixed by requiring that the lapse of its level sets equals
\begin{align}\label{parab.gauge}
n-1=\text{tr}\widetilde{k}-\text{tr}k,
\end{align}
where $\text{tr}\widetilde{k}$ is the mean curvature of $\Sigma_t$ in the homogeneous background, see \eqref{red.var.hom}.
As it turns out, the condition \eqref{parab.gauge} leads to a parabolic equation for the lapse $n$, see \eqref{n.eq}, which is well-posed in the future direction. 

The 1-forms $\psi^1,\psi^2,\psi^3$ are dual to the basis of $\Sigma_t$-tangent vector fields $Y_1,Y_2,Y_3$ that are defined as follows. Consider the basis of standard $d\mathbb{S}^3$-Killing vector fields
\begin{align}\label{Yi}
\notag Y_1=&\,(x^1\frac{\partial}{\partial x^2}-x^2\frac{\partial}{\partial x^1})\big|_{\mathbb{S}^3}+(x^3\frac{\partial}{\partial x^4}-x^4\frac{\partial}{\partial x^3})\big|_{\mathbb{S}^3},\\ 
Y_2=&\,(x^1\frac{\partial}{\partial x^4}-x^4\frac{\partial}{\partial x^1})\big|_{\mathbb{S}^3}
+(x^2\frac{\partial}{\partial x^3}-x^3\frac{\partial}{\partial x^2})\big|_{\mathbb{S}^3},\\
\notag Y_3=&\,(x^1\frac{\partial}{\partial x^3}-x^3\frac{\partial}{\partial x^1})\big|_{\mathbb{S}^3}+(x^4\frac{\partial}{\partial x^2}-x^2\frac{\partial}{\partial x^4})\big|_{\mathbb{S}^3},
\end{align}
where $\frac{\partial}{\partial x^j}$ here denote the Cartesian coordinate vector fields in $\mathbb{R}^4$. Since $\Sigma_T$ is diffeomorphic to $\mathbb{S}^3$, we pull back $Y_i$ and then Lie propagate them along $\partial_t$,
\begin{align}\label{[Yi,dt]}
[\partial_t,Y_i]=0.
\end{align}
Note that $Y_i$ are non-holonomic, since initially on $\Sigma_T$
\begin{align}\label{[Yi,Yj]}
[Y_i,Y_j]=2\epsilon_{ij}{}^lY_l.
\end{align}
Also, note that the relation \eqref{[Yi,Yj]} is propagated in evolution by virtue of \eqref{[Yi,dt]}. The benefit of working with the vector fields $Y_i$ is that they are non-degenerate. We will also use them as commutation vector fields to derive higher order energy estimates in Section \ref{sec:FSE}.

Moreover, we consider an orthonormal frame 
\begin{align}\label{ei}
e_I=e^i_IY_i,\qquad e_0=n^{-1}\partial_t,
\end{align}
which is Fermi propagated along $e_0$ according to 
\begin{align}\label{De0eI}
{\bf D}_{e_0}e_I=n^{-1}(e_In)e_0,
\end{align}
where ${\bf D}$ is the Levi-Civita connection of ${\bf g}$. Note that $e_I$ is $\Sigma_t$-tangent, provided it is initially the case. It also holds that
\begin{align}\label{De0e0}
{\bf D}_{e_0}e_0=n^{-1}(e_In)e_I.
\end{align}
The connection coefficients associated to $e_\mu$ are
\begin{align}\label{kIJgammaIJB}
k_{IJ}:=-{\bf g}({\bf D}_{e_I}e_0,e_J)=k_{IJ},\qquad\gamma_{IJB}={\bf g}({\bf D}_{e_I},e_J,e_B)=g(\nabla_{e_I}e_J,e_B)=-\gamma_{IBJ},
\end{align}
where $\nabla$ is the Levi-Civita connection of $g$. 

Lastly, the fluid vector field $u$ is projected onto $e_\mu$:
\begin{align}\label{u0uI}
u_0={\bf g}(u,e_0)<0,\qquad u_I={\bf g}(u,e_I),\qquad -u_0^2+u_Cu_C=-1.
\end{align}
Similarly, the renormalized fluid components equal
\begin{align}\label{v0vI}
v_0=\rho^{r_s}u_0,\qquad v_I=\rho^{r_s}u_I,\qquad -v_0^2+v_Cv_C=-\rho^{2r_s}.
\end{align}

\subsubsection{Main theorem}

We are now ready to state our overall stability theorem.
\begin{theorem}\label{thm:A}
Let $\widetilde{\bf g},\widetilde{\rho},\widetilde{u}$ be a homogeneous solution \eqref{metric.hom}-\eqref{rho.hom} with extreme tilt, for $c_s^2\in(\frac{1}{3},\frac{3}{7})$. Let $(\Sigma_T\cong\mathbb{S}^3,\mathring{g},\mathring{k},\mathring{\rho},\mathring{u})$ denote a perturbed initial data set around the latter homogeneous solution, satisfying the constraints \eqref{Hamconst.intro}-\eqref{momconst.intro}, and let $(\mathcal{M},{\bf g},\rho,u)$ be the corresponding maximal solution to \eqref{EE}, \eqref{divT.v}, \eqref{divT.rho}. Consider the geometric and fluid variables $k_{IJ},\gamma_{IJB},e_I^i,n,\rho^{2r_s},v_\mu$ introduced above and let $\widehat{k}_{IJ},\widehat{\gamma}_{IJB},\widehat{e}^i_I,\widehat{n},\widehat{\rho^{2r_s}},\widehat{v}_\mu$ denote the corresponding differences, after subtracting their background values, see \eqref{frame.hom}, \eqref{red.var.hom}, \eqref{diff.var}. Assume that their initial data, induced by $(\mathring{g},\mathring{k},\mathring{\rho},\mathring{u})$ and our choice of $e_I$ on $\Sigma_T$ (see Section \ref{subsec:red.ID}), are sufficiently small in $H^N(\Sigma_T)$, for some $N\ge7$ (see Section \ref{SS:SOBOLEVNORMS} for the precise definition of the norms): 
\begin{align}\label{thm.main.init}
\begin{split}
\mathring{\varepsilon}^2=&\,e^{2HT}\big\{\|\widehat{k}\|_{H^N(\Sigma_T)}^2
+\|\widehat{\gamma}\|_{H^N(\Sigma_T)}^2+\|\widehat{e}\|_{H^N(\Sigma_T)}^2+e^{HT}\|\widehat{n}\|_{H^N(\Sigma_T)}^2\big\}\\
&+e^{2HT}\|\widehat{v}\|_{H^N(\Sigma_T)}^2+e^{\frac{8r_s}{1-2r_s}HT}\|\widehat{\rho^{2r_s}}\|_{H^N(\Sigma_T)}^2.
\end{split}
\end{align}
Then, the perturbed solution is globally defined in the future of $\Sigma_T$, relative to the time function $t$ normalized by \eqref{parab.gauge}, and the following estimates hold true:
\begin{align}\label{thm.main.est}
\notag e^{2Ht}\big\{\|\widehat{k}\|_{H^N(\Sigma_t)}^2
+\|\widehat{\gamma}\|_{H^N(\Sigma_t)}^2+\|\widehat{e}\|_{H^N(\Sigma_t)}^2
+e^{Ht}\|\widehat{n}\|_{H^N(\Sigma_t)}^2\big\}\leq &\,C\mathring{\varepsilon}^2,\\
e^{2Ht}\|\widehat{v}\|_{H^{N-1}(\Sigma_t)}^2+e^{\frac{8r_s}{1-2r_s}Ht}\|\widehat{\rho^{2r_s}}\|^2_{H^{N-2}(\Sigma_t)}\leq &\,C\mathring{\varepsilon}^2,\\
\notag e^{-2A_sHt}\big\{e^{2Ht}\|\widehat{v}\|_{\dot{H}^N(\Sigma_t)}^2+e^{\frac{8r_s}{1-2r_s}Ht}\big(\|\widehat{\rho^{2r_s}}\|^2_{\dot{H}^{N-1}(\Sigma_t)}+
\|\widehat{\rho^{2r_s}}\|^2_{\dot{H}^{N}(\Sigma_t)}\big)\big\}\leq &\,C\mathring{\varepsilon}^2,
\end{align}
for all $t\in[T,+\infty)$, where the constant $C>0$ depends on $N$ and the background homogeneous solution, while 
\begin{align}\label{As}
r_s=\frac{c_s^2}{1+c_s^2},\qquad A_s=\frac{3c_s^2-1}{1-c_s^2}.
\end{align} 
Moreover, the following improved estimates are valid:
\begin{align}\label{thm.k.n.hat.imp.est}
\|\widehat{k}_{IJ}\|_{W^{N-4,\infty}(\Sigma_t)}+\|\widehat{n}\|_{W^{N-4,\infty}(\Sigma_t)}\leq C\mathring{\varepsilon} e^{-2Ht}
\end{align}
and there exist functions $(\widehat{e}^\infty)_I^i\in W^{N-4,\infty}(\mathbb{S}^3)$, $\widehat{\rho^{2r_s}}^\infty,\widehat{v}_0^\infty,\widehat{v}_I^\infty\in W^{N-5,\infty}(\mathbb{S}^3)$ such that 
\begin{align}
\notag\|\widehat{e}^i_I-(\widehat{e}^i_I)^\infty(\omega)e^{-Ht}\|_{W^{N-4,\infty}(\Sigma_t)}\leq &\, C\mathring{\varepsilon} e^{-3Ht},\\
\label{thm.e.v.rho.hat.infty}\|\widehat{v}_\mu-\widehat{v}_\mu^\infty(\omega)e^{-Ht}\|_{W^{N-5,\infty}(\Sigma_t)}\leq &\,C\mathring{\varepsilon} e^{-2Ht},\\
\notag\|\widehat{\rho^{2r_s}}-\widehat{\rho^{2r_s}}^\infty(\omega)e^{-\frac{4r_s}{1-2r_s}Ht}\|_{W^{N-5,\infty}(\Sigma_t)}\leq &\,C\mathring{\varepsilon}(e^{-Ht}+e^{2Ht-\frac{4r_s}{1-2r_s}Ht})e^{-\frac{4r_s}{1-2r_s}Ht},
\end{align}
for all $t\in[T,+\infty)$, where $\frac{4r_s}{1-2r_s}>2$. The leading order coefficients satisfy as well: 
\begin{align}\label{e.rho.v.infty.est}
\|(\widehat{e}^i_I)^\infty(\omega)\|_{W^{N-4,\infty}(\mathbb{S}^3)},\|\widehat{v}_\mu^\infty(\omega)\|_{W^{N-5,\infty}(\mathbb{S}^3)},\|\widehat{\rho^{2r_s}}^\infty(\omega)\|_{W^{N-5,\infty}(\mathbb{S}^3)}\leq C\mathring{\varepsilon}.
\end{align}
\end{theorem}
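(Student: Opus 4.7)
The plan is to establish the theorem by a continuity/bootstrap argument in the weighted Sobolev spaces built from the commutation vector fields $Y_1,Y_2,Y_3$ of \eqref{Yi}. First I would set up the perturbed system in the frame \eqref{ei} with parabolic lapse gauge \eqref{parab.gauge}: subtracting the homogeneous equations produces a coupled system of transport/wave-type equations for the geometric differences $\widehat{k},\widehat{\gamma},\widehat{e}$, a parabolic equation for $\widehat{n}$ that is forward well-posed by \eqref{parab.gauge}, and a symmetric-hyperbolic system for the matter variables $\widehat{v}_\mu,\widehat{\rho^{2r_s}}$ derived from \eqref{divT.v}--\eqref{divT.rho}. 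The constraints \eqref{Hamconst.intro}--\eqref{momconst.intro} are used to initialize and are propagated along the evolution.

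The bootstrap assumes the bounds \eqref{thm.main.est} with a larger constant on some maximal interval $[T,T^*)$ and aims to improve them. For the geometric variables, $L^2$ pairings commuted $m\le N$ times with monomials in $Y_1,Y_2,Y_3$ yield energy identities whose coercive friction term comes from the mean curvature $\text{tr}\widetilde{k}$ of the accelerated expansion; this drives the $e^{-2Ht}$ decay of $\widehat{k},\widehat{\gamma},\widehat{e}$ in $H^N$, with matter source terms controlled by the already-assumed bounds on $\widehat{v},\widehat{\rho^{2r_s}}$. The parabolic equation for $\widehat{n}$ produces the additional $e^{-Ht}$ factor and hence the improved decay in \eqref{thm.k.n.hat.imp.est} after Sobolev embedding $H^{N-3}(\mathbb{S}^3)\hookrightarrow W^{N-5,\infty}(\mathbb{S}^3)$.

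The main difficulty is the fluid energy estimate at top order. The symmetrizer for $(\widehat{v}_\mu,\widehat{\rho^{2r_s}})$ is dictated by the background density $\widetilde{\rho}^{2r_s}$ and by $\widetilde{u}$, and the exponential growth $\cosh(\theta)\sim e^{A_sHt}$ from \eqref{fluid.lead.null} enters the principal symbol in the spatial direction picked out by the tilt. After commuting $N$ copies of the $Y_i$ through the fluid system and pairing with the weighted symmetrizer, there remains a top-order commutator error carrying a factor of $e^{A_sHt}$ that must be absorbed into the expansion-generated friction. A careful computation of the residual coefficient shows it has a favorable sign precisely when $A_s$ lies below a certain threshold, equivalent to $c_s^2<\tfrac{3}{7}$; this is the degeneracy flagged in the abstract and the main obstacle of the proof. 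It is invisible to formal series analysis because the commutator structure that generates it arises only in genuine $L^2$-based estimates. In the restricted range the top-order estimate closes with the weakened weight $e^{-2A_sHt}$ appearing in front of $\|\widehat{v}\|_{\dot{H}^N}^2$ and $\|\widehat{\rho^{2r_s}}\|_{\dot{H}^N}^2$ in \eqref{thm.main.est}.

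Finally, once the bootstrap closes and the solution is global on $[T,\infty)$, the pointwise decay from \eqref{thm.k.n.hat.imp.est} and Sobolev embedding gives time-integrability of $\partial_t(e^{Ht}\widehat{e}^i_I)$, $\partial_t(e^{Ht}\widehat{v}_\mu)$, and $\partial_t(e^{\frac{4r_s}{1-2r_s}Ht}\widehat{\rho^{2r_s}})$ in $W^{N-5,\infty}(\Sigma_t)$. Integrating from $t$ to $+\infty$ produces the asymptotic profiles $(\widehat{e}^i_I)^\infty(\omega)$, $\widehat{v}_\mu^\infty(\omega)$, $\widehat{\rho^{2r_s}}^\infty(\omega)$ on $\mathbb{S}^3$; the error rates in \eqref{thm.e.v.rho.hat.infty} follow by estimating the tail integrals of the corresponding decay envelopes, and the uniform bound \eqref{e.rho.v.infty.est} is inherited from the closed bootstrap.
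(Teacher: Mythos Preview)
Your overall architecture matches the paper's: bootstrap in weighted $H^N$ norms built from the $Y_i$, geometric energy estimates exploiting the expansion, a degenerate top-order fluid estimate that forces the restriction $c_s^2<\tfrac{3}{7}$, and extraction of asymptotic profiles by integrating the renormalized evolution equations from $t$ to $+\infty$. Two points deserve correction, however.

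First, your description of the mechanism behind the range restriction is off. There is no sign condition on a residual coefficient, and nothing is absorbed into the friction. What actually happens is that the top-order energy identity for the fluid (built from $\rho^{2r_s}Y^\iota\widehat{v}_I$, $Y^\iota\widehat{\rho^{2r_s}}$, and the auxiliary quantity $v_IY^\iota\widehat{v}_I$) produces an integral inequality of the form
\[
\mathcal{E}_{tot}(t)\le C\mathcal{E}_{tot}(T)+C\int_T^t e^{(2A_s-1)H\tau}\,\mathcal{E}_{tot}(\tau)\,d\tau+\text{(better terms)}.
\]
The factor $e^{2A_sH\tau}$ comes from the extreme-tilt degeneracy $v_0^2/\rho^{2r_s}\sim e^{2A_sHt}$ in the symmetrizer, while the $e^{-H\tau}$ is the best available decay of the inhomogeneous error terms (those in $\mathfrak{V}^\iota_I,\mathfrak{S}^\iota$ that lack a $\rho^{2r_s}$ factor, such as $\widehat{k}_{CI}v_C$ and $\widehat{\gamma}_{11I}\widetilde{v}_1^2/v_0$; these are not commutator terms). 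Gronwall then closes if and only if $2A_s<1$, i.e.\ $c_s^2<\tfrac{3}{7}$. The coefficient never changes sign; the threshold is purely about integrability.

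Second, the $H^N$ decay you state for $\widehat{k},\widehat{\gamma},\widehat{e}$ is $e^{-Ht}$, not $e^{-2Ht}$ (the weight in $\mathcal{E}_{geom}$ is $e^{2Ht}$). The improved $e^{-2Ht}$ rate in \eqref{thm.k.n.hat.imp.est} holds only for $\widehat{k}$ and $\widehat{n}$ in $W^{N-4,\infty}$, and it is obtained \emph{after} the bootstrap closes by revisiting the equations \eqref{k.hat.eq}, \eqref{n.hat.eq} as ODEs in $t$ with right-hand sides now known to be $O(e^{-2Ht})$; parabolicity of the lapse equation plays no role in that step.
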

\begin{proof}
The proof of the main estimates \eqref{thm.main.est} consists of a bootstrap argument \eqref{Boots} that is split into Propositions \ref{prop:geom.est}, \ref{prop:fluid.est}, \ref{prop:cont.arg}. The improved asymptotic behaviors \eqref{thm.k.n.hat.imp.est}-\eqref{thm.e.v.rho.hat.infty} are a consequence of \eqref{thm.main.est} and are derived in Proposition \ref{prop:asym.beh}.
\end{proof}
\begin{remark}
For $c_s^2\in(\frac{1}{3},\frac{3}{7})$, the other parameters range in $r_s\in(\frac{1}{4},\frac{3}{10})$, $A_s\in(0,\frac{1}{2})$. If we were to treat the whole beyond radiation range of sound speeds $c_s^2\in(\frac{1}{3},1)$, then we would have $r_s\in(\frac{1}{4},\frac{1}{2})$, $A_s\in(0,+\infty)$. The reason for the restrictive assumption $c_s^2\in(\frac{1}{3},\frac{3}{7})$ is due to a degenerate top order estimate for the fluid variables, see \eqref{fluid.top.est} in Proposition \ref{prop:fluid.est}, which is only useful when $A_s<1/2$, ie. $c_s<\sqrt{3/7}$, see also Remark \ref{rem:top.fluid.est}. We elaborate more on this issue in Section \ref{subsec:method}.
\end{remark}
\begin{remark}\label{rem:asym.beh}
The asymptotic behavior of the original variables ${\bf g},\rho,u$ is easily deduced from Theorem \ref{thm:A}: 
\begin{align}\label{g.asym}
\|n-1\|_{W^{N-4,\infty}(\Sigma_t)}\leq C e^{-2Ht},\qquad \|g_{ij}-g_{ij}^\infty(\omega)e^{2Ht}\|_{W^{N-4,\infty}(\Sigma_t)}\leq C,
\end{align}
\begin{align}\label{rho.u.asym}
\begin{split}
\|e^{-A_sHt}u_\mu -u_\mu^\infty(\omega)\|_{W^{N-5,\infty}(\Sigma_t)}\leq&\, C(e^{-Ht}+e^{2Ht-\frac{4r_s}{1-2r_s}Ht}),\\
\|e^{\frac{2r_s}{1-2r_s}Ht}\rho-\rho^\infty(\omega)\|_{W^{N-5,\infty}(\Sigma_t)}
\leq&\, C(e^{-Ht}+e^{2Ht-\frac{4r_s}{1-2r_s}Ht}),
\end{split}
\end{align}
for all $[T,+\infty)$, where 
\begin{align}\label{g.rho.u.infty}
g_{ij}^\infty=(\Omega^C_i)^\infty(\Omega^C_j)^\infty,\qquad (\Omega^C_i)^\infty(e_C^b)^\infty=\delta_i^b,\qquad (e^b_C)^\infty=(\widehat{e}^b_C)^\infty+(\widetilde{e}^b_C)^\infty,\\
\rho^\infty=\big[\widehat{\rho^{2r_s}}^\infty+(\widetilde{\rho}^\infty)^{2r_s}\big]^\frac{1}{2r_s},\qquad u_\mu^\infty=(\rho^\infty)^{r_s}v_\mu^\infty,\qquad v_\mu^\infty=\widehat{v}_\mu^\infty+\widetilde{v}^\infty_\mu.
\end{align}
Here, $(\widetilde{e}_C^b)^\infty,\widetilde{\rho}^\infty,\widetilde{v}^\infty_\mu$ are the leading order coefficients of the corresponding background variables, see \eqref{hom.exp}.
\end{remark}
\begin{remark}
The asymptotic behavior of the metric \eqref{g.asym} is de Sitter-like, as anticipated. On the other hand, the asymptotic behavior of the fluid variables \eqref{rho.u.asym}-\eqref{g.rho.u.infty} confirms that extreme tilt behavior persists to general perturbations of the homogeneous backgrounds. Indeed, from \eqref{thm.e.v.rho.hat.infty} and \eqref{hom.exp} we have 
\begin{align}\label{extreme.tilt}
-(v_0^\infty)^2+v_I^\infty v_I^\infty=\lim_{t\rightarrow+\infty}e^{2Ht}(-v_0^2+v_Iv_I)
=-\lim_{t\rightarrow+\infty}e^{2Ht}\rho^{2r_s}=0.
\end{align}
Hence, 
\begin{align}\label{extreme.tilt.2}
(u_0^\infty)^2=(\rho^\infty)^{2r_s}(v_0^\infty)^2=[(\rho^\infty)^{r_s}v_I^\infty] (\rho^\infty)^{r_s}v_I^\infty=u_I^\infty u_I^\infty,
\end{align}
that is, the leading order behavior of $u_\mu$ is null. 
\end{remark}
%


%

%
\subsection{Previous work on the relativistic Euler equations: The entire beyond radiation interval $1/\sqrt{3}<c_s<1$}

Our result is preceded by works on the relativistic Euler equations on a fixed (flat) FLRW background, proving the future stability of homogeneous perfect fluids with extreme tilt, first for the restricted range $c_s^2\in(\frac{1}{3},\frac{1}{2})$ \cite{Ol2} and then for all sound speeds beyond radiation $c_s^2\in(\frac{1}{3},1)$ \cite{MO}. This raises the question as to whether our method can be improved to treat all $1/\sqrt{3}<c_s<1$. In Appendix \ref{app:Euler}, we sketch a proof of the analogue of the future stability result in \cite{MO} for the Euler equations on fixed $\mathbb{S}^3$-FLRW metrics, incorporating the entire interval $c_s^2\in(\frac{1}{3},1)$. The argument works independently of the value of $c_s$, thanks to the absence of certain error terms in the case of the simplified isotropic FLRW metric. Hence, we pinpoint in a sense where the problem lies when studying the Euler equations on general inhomogeneous metrics, as in the coupled to Einstein case. At the moment, we cannot see an improvement in our estimates that would lead to an extension of Theorem \ref{thm:A}, for $c_s^2\ge3/7$. We comment more on the degenerate top order estimates that we are able to obtain in the next subsection. We should also emphasize that this seems to be a problem related to energy estimates in finite regularity Sobolev spaces, which is not encountered in the study of formal series expansions, making it even more peculiar should a real instability be present for $c_s^2\ge3/7$.
\subsection{Method of proof: Degenerate top order estimates}\label{subsec:method}
Our proof is based on bootstrapping the smallness of a weighted $H^N$-type of energy for the variables $\widehat{k},\widehat{\gamma},\widehat{e},\widehat{n},\widehat{v},\widehat{\rho^{2r_s}}$, precisely, \eqref{geom.en}-\eqref{Boots}. Improving the bootstrap assumptions yields a global solution satisfying the main stability estimate \eqref{thm.main.est} by a standard continuation argument, see Proposition \ref{prop:cont.arg}. 

Assuming that the fluid variables are `well-behaved', the stability analysis for the geometric variables is similar to the stability of de Sitter, which by now has been well-understood in various different settings \cite{And,Fr,FK,F,MV,Rin}. Hence, the crux of the matter lies in controlling the part of the energy corresponding to the fluid variables $\widehat{v},\widehat{\rho^{2r_s}}$. As it turns out, the lower than top order energy in \eqref{fluid.en} can be controlled by treating the resulting Euler equations \eqref{vI.hat.eq}, \eqref{rho.hat.eq} as transport equations, using the bootstrap assumptions to view all other terms in the equations as error (here written at zeroth order):
\begin{align}\label{ODE.error.intro}
\partial_t\widehat{v}_I+H\widehat{v}_I=\text{error},\qquad\qquad
\partial_t\widehat{\rho^{2r_s}}+\frac{4r_s}{1-2r_s}\widehat{\rho^{2r_s}}=\text{error}.
\end{align}
The argument actually resembles the analysis of formal series expansions \cite{LEUW,Ren} and it works for all sound speeds beyond radiation $c_s^2\in(\frac{1}{3},1)$. 

The stability analysis becomes more intricate when deriving top order energy estimates, due to the extreme tilt behavior that we wish to bootstrap in the first place. Firstly, it requires a formulation of the Euler equations which is useful for reading off the asymptotic behavior of the renormalized fluid variables, like \eqref{ODE.error.intro}, and in addition, one where its symmetric hyperbolic structure can be exploited. At top order, this is achieved by complementing the top order analogue of \eqref{ODE.error.intro} with an equation for $v_IY^\iota\widehat{v}_I$, see \eqref{vI.hat.eq.diff}, \eqref{rho.hat.eq.diff}, \eqref{vI.hat.top.eq}. Regardless of the weights one chooses in the definition of the total energy $\mathcal{E}_{tot}(t)$ of the system, a top order energy estimate for the Euler equations as we formulated them will have the form 
\begin{align}\label{top.en.est.intro}
\mathcal{E}_{tot}(t)\leq \text{initial data}-\int^t_T C_{error}(\tau)\cdot\frac{v_0^2}{{\bf g}(v,v)}\mathcal{E}_{tot}(\tau)d\tau,
\end{align}
where the coefficient of the energy in the last integral contains the ${\bf g}$-norm of the fluid vector field. On the other hand, extreme tilt behavior implies that $|{\bf g}(v,v)|$ decays faster than $v_0^2$. Indeed, based on the behavior of the fluid that we are trying to derive, see \eqref{thm.e.v.rho.hat.infty}, we have
\begin{align}\label{top.en.coeff.intro}
-\frac{v_0^2}{{\bf g}(v,v)}=\frac{v_0^2}{\rho^{2r_s}}\sim e^{-2Ht+\frac{4r_s}{1-2r_s}Ht}=e^{2A_sHt}.
\end{align}
Since $A_s\rightarrow +\infty$ as $c_s\rightarrow1$, we observe that the energy estimate \eqref{top.en.est.intro} becomes more and more degenerate the larger the sound speed is. 

It is clear that in order to obtain a useful estimate for $\mathcal{E}_{tot}(t)$ we need 
\begin{align}\label{Cerror.est}
C_{error}(t)e^{2A_sHt}\lesssim e^{-\delta Ht}, \qquad\text{for some $\delta>0$.}
\end{align}
However, for general inhomogeneous metrics, $C_{error}(t)\lesssim e^{-Ht}$ is the best bound we have, see Remarks \ref{rem:top.fluid.error}. The latter allows us to complete our bootstrap argument only when
\begin{align}\label{range.intro}
2A_s<1\qquad\Leftrightarrow\qquad c_s^2<\frac{3}{7},
\end{align} 
which leads to the restricted range of sound speeds beyond radiation, $c_s^2\in(\frac{1}{3},\frac{3}{7})$, see also Remark \ref{rem:top.fluid.est}.

At the moment, we have no improved estimate for $C_{error}(t)$, apart from when the metric is exactly FLRW, see Appendix \ref{app:Euler}. In that case, we can show that actually $C_{error}(t)\lesssim e^{-\delta Ht}e^{-2A_sHt}$, which in turn yields a uniform energy estimate in $c_s^2\in(\frac{1}{3},1)$. This is consistent with the results in \cite{MO}. We do not know whether for general metrics the above degeneracy is actually present or merely a defect of our energy argument. We should emphasize that it is only encountered at the level of energy estimates and not at the level of a formal series expansion argument using \eqref{ODE.error.intro}, which indicates that if it is a real phenomenon, it cannot be seen by studying analytic solutions.

\subsection{Outline of the paper}

In Section \ref{sec:red.eq}, we express the Einstein-Euler system in the framework introduced in Section \ref{sss:framework}. We set up the equations for the perturbed variables minus their homogeneous counterparts, whose precise asymptotic behavior is derived in Appendix \ref{app:ODE}. In Section \ref{sec:Boots}, we introduce the weighted norms and bootstrap assumptions that we will use to prove the future stability of the homogeneous background. In Section \ref{sec:FSE}, we derive the main energy estimates that complete the bootstrap argument, for the restricted range of sound speeds $c_s^2\in(\frac{1}{3},\frac{3}{7})$. The bootstrap estimates are reiterated in Section \ref{sec:refined.est} to obtain the precise asymptotic behavior of all variables at infinity. Finally, in Appendix \ref{app:Euler}, we consider the Euler equations on a fixed $\mathbb{S}^3$-FLRW background and sketch how our main energy estimates can improved to encompass the entire beyond radiation interval $c_s^2\in(\frac{1}{3},\frac{3}{7})$.

\subsection{Notation}

\begin{itemize}
\item We will use the symbol $\mathcal{O}(e^{Bt})$, $B\in\mathbb{R}$, to denote smooth homogeneous functions $\mathbb{R}\to\mathbb{R}$ that satisfy $|\partial_t^N\mathcal{O}(e^{Bt})|\leq C_{N,B}e^{Bt}$, for any $N\in\mathbb{N}$, where the constant $C_{N,B}$ depends on $N,B,\Lambda,c_s$, as well as the background tilted solutions. 

\item $H=\sqrt{\frac{\Lambda}{3}}$.

\item $C>0$ denotes a generic constant that depends on the number $N$ of derivatives in our norms and the homogeneous background we are perturbing about. Also, it will be allowed to change from one line to the next. 

\item Certain key parameters related to the speed of sound appear in the definition of our norms and renormalized quantities:  
\begin{align}\label{key.param}
\notag
\text{(unrestricted range)}&\qquad c_s^2\in(\frac{1}{3},1),\qquad r_s=\frac{c_s^2}{1+c_s^2}\in(\frac{1}{4},\frac{1}{2}),\qquad A_s=\frac{3c_s^2-1}{1-c_s^2}\in(0,+\infty)\\
\text{(restricted range)}&\qquad c_s^2\in(\frac{1}{3},\frac{3}{7}),\qquad r_s=\frac{c_s^2}{1+c_s^2}\in(\frac{1}{4},\frac{3}{10}),\qquad A_s=\frac{3c_s^2-1}{1-c_s^2}\in(0,\frac{1}{2})\\
\text{(algebraic relations)}&\qquad\frac{4r_s}{1-2r_s}>2,\qquad A_s=-1+\frac{2r_s}{1-2r_s}
\notag
\end{align}

\item We use Einstein summation for repeated indices. Whenever a sum is computed relative to the spatial orthonormal frame, we do not raise indices, e.g. $(e_In)e_I=\sum_{I=1}^3(e_In)e_I$.

\item Latin indices $a,b,i,j,A,B,I,J$  
range over $\{1,2,3\}$. Small letters correspond to the vector fields $Y_1,Y_2,Y_3$, while capital letters are reserved for the spatial orthonormal frame $e_I$. In few instances we use Greek letters $\alpha,\beta,\mu,\nu$, ranging over $\{0,1,2,3\}$. For example, $v^\alpha e_\alpha=-v_0e_0+v_Ie_I$.

\item The Riemann curvature ${\bf Riem}$, Ricci curvature ${\bf Ric}$, and scalar curvature ${\bf R}$ of ${\bf g}$ are defined as follows:
\begin{align} \label{curv.bf.g}
\notag	{\bf Riem}(e_{\alpha},e_{\beta},e_{\mu},e_{\nu}):=&\,{\bf g}
	({\bf D}^2_{e_{\alpha} e_{\beta}} e_\nu 
		- 
		{\bf D}^2_{e_{\beta} e_{\alpha}} e_\nu,
		e_\mu ), \\
	{\bf Ric}(e_{\alpha},e_{\beta})
	 :=&-{\bf Riem}(e_{\alpha},e_0,e_{\beta},e_0)+{\bf Riem}(e_{\alpha},e_I,e_{\beta},e_I),\\
\notag	{\bf R }
	 :=&-{\bf Ric}(e_0,e_0)+{\bf Ric}(e_I,e_I),
\end{align}
where ${\bf D}^2_{e_{\alpha} e_{\beta}}e_\nu:={\bf D}_{e_\alpha}({\bf D}_{e_\beta}e_\nu)-{\bf D}_{{\bf D}_{e_\alpha}e_{\beta}}e_\nu$.
The corresponding curvature tensors of $g$, denoted by $Riem$, $Ric$, $R$, are defined analogously.

\end{itemize}

\subsection{Acknowledgments}

G.F. would like to thank Hans Ringstr\"om, Jared Speck, and Claes Uggla for useful discussions. 
Also, G.F. would like to thank everyone at MFO for their hospitality during the workshop ``Mathematical Aspects of General Relativity'' 29 August - 4 September, 2021, which was beneficial for the current project.
G.F. gratefully acknowledges the support of the ERC starting grant 101078061 SINGinGR, under the European Union's Horizon Europe program for research and innovation.

\section{Setting up the stability problem}\label{sec:red.eq}

In this section, we express the Einstein-Euler system \eqref{EE}, \eqref{divT.v}, \eqref{divT.rho} and the background homogeneous variables in the gauge \eqref{metric}-\eqref{v.rs}. Moreover, we derive the resulting equations for the variables that measure the closeness of the perturbed solution to the homogeneous background.

\subsection{The reduced Einstein-Euler equations}
\begin{lemma}\label{lem:red.eq}
The geometric variables $k_{IJ},\gamma_{IJB},e^i_I,n$ satisfy the evolution equations: 
\begin{align}
\label{k.eq}e_0k_{IJ}+(n-1-\mathrm{tr}\widetilde{k})k_{IJ}=&-n^{-1} e_I e_J n+e_C \gamma_{IJC}-e_I\gamma_{CJC}\\
\notag&+n^{-1}\gamma_{IJC} e_C n-\gamma_{CID}\gamma_{DJC}-\gamma_{IJD}\gamma_{CCD}\\
\notag&-\Lambda\delta_{IJ}-(1+c_s^2)\rho^{1-2r_s} v_I v_J-\frac{1}{2}\delta_{IJ}(1-c_s^2)\rho,\\
\label{gamma.eq}e_0\gamma_{IJB}-k_{IC} \gamma_{CJB}=&\,e_B k_{IJ}-e_J k_{BI}-k_{IC}\gamma_{BJC}-k_{CJ} \gamma_{BIC}+k_{IC} \gamma_{JBC}
+k_{BC} \gamma_{JIC}\\
&+n^{-1}(e_B n)k_{JI}-n^{-1}(e_J n)k_{BI},\notag\\
\label{eIi.eq}e_0e_I^i=&\,k_{IC} e_C^i,\\
\label{n.eq}\partial_tn-e_C e_C n=&-\gamma_{CCD}e_Dn-nk_{CD}k_{CD}+n\Lambda+\partial_t\mathrm{tr}\widetilde{k}\\
\notag&+(1+c_s^2)n\rho^{1-2r_s} v_Cv_C+\frac{3}{2}(1-c_s^2)n\rho.
\end{align}
The renormalized fluid variables $\rho,v_0,v_I$ satisfy the evolution equations:
\begin{align}
\label{v0.eq}v^\alpha e_\alpha v_0+k_{CD}v_Cv_D=&-\frac{1}{2}e_0(\rho^{2r_s})-n^{-1}(e_Cn)v_0v_C,\\
\label{vI.eq}
v^\alpha e_\alpha v_I+k_{CI}v_0v_C=&-\frac{1}{2}e_I(\rho^{2r_s})-n^{-1}(e_In)v_0^2-\gamma_{CDI}v_Cv_D,\\
\label{rho.eq}(1-2r_s)v^\alpha e_\alpha\log\rho+v_0\mathrm{tr}k=&\,e_0v_0-e_Cv_C-n^{-1}(e_Cn)v_C-\gamma_{CDC}v_D,
\end{align}
where $v^\alpha e_\alpha=-v_0e_0+v_Ce_C$.
Also, the following constraint equations hold:
\begin{align}
\label{Hamconst}2e_C \gamma_{DDC}
-
\gamma_{CDE}\gamma_{EDC}
-
\gamma_{CCD}\gamma_{EED}
=&\,k_{CD}k_{CD}-(n-1-\mathrm{tr}\widetilde{k})^2+2\Lambda+2(1+c_s^2)\rho^{1-2r_s}v_0^2-2c_s^2\rho,\\
e_C k_{CI}+e_In+ 
k_{ID} \gamma_{CDC}
-
k_{CD} \gamma_{CID}=&-(1+c_s^2)\rho^{1-2r_s} v_0v_I.
\label{momconst}
 \end{align}
\end{lemma}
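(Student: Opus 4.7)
My plan is to derive every equation of the lemma by projecting the Einstein equation \eqref{EE} and the Euler system \eqref{divT.v}, \eqref{divT.rho} onto the adapted frame $e_0,e_I$ of Section 1.3.2, and then eliminating $\mathrm{tr}\,k$ via the parabolic gauge \eqref{parab.gauge}. First I would record the basic algebraic identities implied by the Fermi propagation \eqref{De0eI}, \eqref{De0e0} and the definitions \eqref{kIJgammaIJB}: namely
\[
{\bf D}_{e_I}e_0=-k_{IJ}e_J,\qquad {\bf D}_{e_I}e_J=\gamma_{IJB}\,e_B-k_{IJ}\,e_0,
\]
\[
[e_0,e_I]=n^{-1}(e_In)\,e_0+k_{IJ}\,e_J,\qquad [e_I,e_J]=(\gamma_{IJB}-\gamma_{JIB})\,e_B,
\]
using the symmetry of $k_{IJ}$ (hypersurface orthogonality of $e_0$). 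Combined with the Lie propagation relation $[\partial_t,Y_i]=0$, these immediately yield \eqref{eIi.eq}: expanding $[e_0,e_I]=(e_0 e^i_I)\,Y_i+n^{-1}(e_In)\,e_0$ and matching the spatial parts with $k_{IJ}e^i_J Y_i$ gives $e_0 e^i_I=k_{IC}e^i_C$.

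The evolution equations \eqref{k.eq}, \eqref{gamma.eq} then come from the Gauss--Codazzi--Ricci decomposition of the spacetime curvature relative to the frame. The Ricci identity for ${\bf Riem}(e_0,e_I,e_0,e_J)$ produces $-e_0 k_{IJ}-k_{IC}k_{CJ}-n^{-1}e_Ie_Jn$ plus the connection terms $e_C\gamma_{IJC}-e_I\gamma_{CJC}+n^{-1}\gamma_{IJC}e_Cn-\gamma_{CID}\gamma_{DJC}-\gamma_{IJD}\gamma_{CCD}$, while the trace-reversed Einstein equation ${\bf Ric}_{\mu\nu}=T_{\mu\nu}-\tfrac12\mathrm{tr}\,T\,{\bf g}_{\mu\nu}+\Lambda\,{\bf g}_{\mu\nu}$, with $T$ from \eqref{Tmunu} rewritten using $v_\mu=\rho^{r_s}u_\mu$, supplies the matter terms $-\Lambda\delta_{IJ}-(1+c_s^2)\rho^{1-2r_s}v_Iv_J-\tfrac12(1-c_s^2)\rho\,\delta_{IJ}$. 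The mean-curvature factor $(n-1-\mathrm{tr}\,\widetilde k)$ in \eqref{k.eq} appears after substituting $\mathrm{tr}\,k=1+\mathrm{tr}\,\widetilde k-n$ in the leading $e_0 k_{IJ}+\mathrm{tr}\,k\,k_{IJ}$ combination. Analogously, \eqref{gamma.eq} follows by applying ${\bf D}_{e_0}$ to $\gamma_{IJB}=g(D_{e_I}e_J,e_B)$ and using the Ricci identity ${\bf Riem}(e_0,e_I)e_J=({\bf D}_{e_0}{\bf D}_{e_I}-{\bf D}_{e_I}{\bf D}_{e_0}-{\bf D}_{[e_0,e_I]})e_J$ contracted with $e_B$; this is just the Codazzi equation packaged as an evolution statement for $\gamma$.

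The lapse equation \eqref{n.eq} is the subtlest step. Differentiating the gauge condition \eqref{parab.gauge} in $t$ yields $\partial_t n=\partial_t\mathrm{tr}\,\widetilde k-n\,e_0\mathrm{tr}\,k$, and the trace of \eqref{k.eq} expresses $e_0\mathrm{tr}\,k$ in terms of $-n^{-1}\Delta n$, the scalar combination $2e_C\gamma_{DDC}-\gamma_{CDE}\gamma_{EDC}-\gamma_{CCD}\gamma_{EED}$, and the matter sources. I would then feed in the Hamiltonian constraint \eqref{Hamconst}, itself the Gauss equation combined with the $(0,0)$ Einstein equation, to eliminate this second-order spatial curvature combination; what remains is precisely the parabolic form \eqref{n.eq}. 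I expect this bookkeeping to be the main, though still routine, obstacle: the constants and signs must line up exactly, and the identification $(n-1-\mathrm{tr}\,\widetilde k)^2=(\mathrm{tr}\,k)^2$ has to be tracked consistently throughout the substitutions.

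Finally, the fluid equations \eqref{v0.eq}--\eqref{rho.eq} arise by projecting \eqref{divT.v} onto $e_0$ and onto $e_I$, using $({\bf D}_v v)_\nu=v^\alpha e_\alpha v_\nu+v^\alpha v^\beta {\bf g}({\bf D}_{e_\alpha}e_\nu,e_\beta)$ combined with the connection formulas recorded above; the terms $k_{CD}v_Cv_D$ and $n^{-1}(e_Cn)v_0v_C$ in \eqref{v0.eq}, and $k_{CI}v_0v_C$, $n^{-1}(e_In)v_0^2$, $\gamma_{CDI}v_Cv_D$ in \eqref{vI.eq}, emerge directly from these projections after noting that $v^\alpha e_\alpha=-v_0 e_0+v_C e_C$ and $v^0=-v_0$. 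Equation \eqref{divT.rho} expands in the same way into \eqref{rho.eq} via ${\bf D}^\mu v_\mu=-e_0 v_0+e_C v_C+n^{-1}(e_Cn)v_C+v_0\mathrm{tr}\,k+\gamma_{CDC}v_D$. The constraints \eqref{Hamconst}, \eqref{momconst} are the standard Gauss and Codazzi identities contracted with the $(0,0)$ and $(0,I)$ projections of Einstein's equation, with the parabolic gauge \eqref{parab.gauge} once more producing $(n-1-\mathrm{tr}\,\widetilde k)^2$ in place of $(\mathrm{tr}\,k)^2$ in the Hamiltonian constraint.
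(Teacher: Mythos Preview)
Your proposal is correct and follows essentially the same route as the paper: compute $e_0 k_{IJ}$ via the Ricci identity combined with the Gauss equation and the trace-reversed Einstein equation, obtain \eqref{gamma.eq} from the Codazzi equation after applying ${\bf D}_{e_0}$ to $\gamma_{IJB}$, derive \eqref{eIi.eq} from the commutator $[e_0,e_I]$, get \eqref{n.eq} by tracing \eqref{k.eq} and substituting the Hamiltonian constraint, and obtain the fluid equations and constraints by expanding the covariant operations in \eqref{divT.v}--\eqref{divT.rho} and the Gauss--Codazzi identities. One minor slip: the combination arising before the gauge substitution is $e_0 k_{IJ}-\mathrm{tr}\,k\,k_{IJ}$ (not $+\mathrm{tr}\,k$), which is why inserting $\mathrm{tr}\,k=-(n-1-\mathrm{tr}\,\widetilde k)$ gives the $+(n-1-\mathrm{tr}\,\widetilde k)$ factor in \eqref{k.eq}.
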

\begin{proof}
First, note that the Einstein equations \eqref{EE}, the formula \eqref{Tmunu} of $T_{\mu\nu}$, for $p=c_s^2\rho$, and \eqref{v.rs} imply the formula:
\begin{align}\label{EE2}
{\bf Ric}_{\mu\nu}=&\,\Lambda{\bf g}_{\mu\nu}+(1+c_s^2)\rho^{1-2r_s} v_\mu v_\nu+\frac{1}{2}(1-c_s^2)\rho{\bf g}_{\mu\nu}.
\end{align}
To prove \eqref{k.eq}, we use the formula \eqref{kIJgammaIJB} for $k_{IJ}$, the propagation relations \eqref{De0eI}, \eqref{De0e0}, and \eqref{EE2} to compute its $e_0$ derivative:
\begin{align}\label{R0I0J}
\notag e_0k_{IJ}=&-{\bf g}({\bf D}_{e_0}({\bf D}_{e_I}e_0),e_J)-n^{-1}(e_Jn){\bf g}({\bf D}_{e_I}e_0,e_0)\\
\notag=&-{\bf g}({\bf D}_{e_0e_I}^2e_0,e_J)-{\bf g}({\bf D}_{{\bf D}_{e_0}e_I}e_0,e_J)\\
\notag=&\,{\bf Riem}(e_I,e_0,e_J,e_0)-{\bf g}({\bf D}_{e_Ie_0}^2e_0,e_J)-n^{-2}(e_In)(e_Jn)\\
=&-{\bf Ric}(e_I,e_J)+{\bf Riem}(e_I,e_C,e_J,e_C)-{\bf g}({\bf D}_{e_I}({\bf D}_{e_0}e_0),e_J)+{\bf g}({\bf D}_{{\bf D}_{e_I}e_0}e_0,e_J)\\
\notag&-n^{-2}(e_In)(e_Jn)\\
\notag=&-\Lambda\delta_{IJ}-(1+c_s^2)\rho^{1-2r_s} v_Iv_J-\frac{1}{2}\delta_{IJ}(1-c_s^2)\rho+{\bf Riem}(e_I,e_C,e_J,e_C)\\
\notag&-n^{-1}e_Ie_Jn+n^{-1}\gamma_{IJC}e_Cn+k_{IC}k_{CJ}.
\end{align}
Next, we use the Gauss equations
\begin{align}\label{Gauss}
{\bf Riem}(e_I,e_C,e_J,e_C)=&\,Ric(e_I,e_J)-k_{IC}k_{CJ}+\text{tr}kk_{IJ}
\end{align}
and expand the Ricci term in the last RHS:
\begin{align}\label{RIJ}
\notag Ric(e_I,e_J)=&\, Riem(e_I,e_C,e_J,e_C)=g(\nabla_{e_Ie_C}^2e_C-\nabla_{e_Ce_I}^2e_C,e_J)\\
\notag=&\,g(\nabla_{e_I}(\nabla_{e_C}e_C),e_J)-g(\nabla_{e_C}(\nabla_{e_I}e_C),e_J)-g(\nabla_{\nabla_{e_I}e_C}e_C,e_J)+g(\nabla_{\nabla_{e_C}e_I}e_C,e_J)\\
=&\,g(\nabla_{e_I}(\gamma_{CCD}e_D),e_J)-g(\nabla_{e_C}(\gamma_{ICD}e_D),e_J)-\gamma_{ICD}\gamma_{DCJ}+\gamma_{CID}\gamma_{DCJ}\\
\notag=&\,e_I\gamma_{CCJ}+\gamma_{CCD}\gamma_{IDJ}-e_C\gamma_{ICJ}-\gamma_{ICD}\gamma_{CDJ}-\gamma_{ICD}\gamma_{DCJ}+\gamma_{CID}\gamma_{DCJ}\\
\notag=&-e_I\gamma_{CJC}+e_C\gamma_{IJC}-\gamma_{CID}\gamma_{DJC}-\gamma_{CCD}\gamma_{IJD},
\end{align}
where in the last equality we used the anti-symmetry $\gamma_{IJB}=-\gamma_{IBJ}$.
Plugging \eqref{Gauss}-\eqref{RIJ} into \eqref{R0I0J} and using the gauge condition \eqref{parab.gauge} yields \eqref{k.eq}.

Also, contracting \eqref{Gauss}, \eqref{RIJ} gives
\begin{align}\label{Gauss2}
{\bf R}+2{\bf Ric}(e_0,e_0)=&\,2e_C\gamma_{DDC}-\gamma_{CED}\gamma_{DEC}-\gamma_{CCD}\gamma_{EED}-k_{IC}k_{CJ}+(\text{tr}k)^2,
\end{align}
which after using the Einstein equations \eqref{EE}, the gauge condition \eqref{parab.gauge}, and re-arranging the terms, results in the Hamiltonian constraint equation \eqref{Hamconst}. The momentum constraint equation is a rewriting of \eqref{momconst.intro} (its version along $\Sigma_t$ instead of the initial hypersurface) using \eqref{parab.gauge} and expanding the divergence of $k$ relative to the frame $e_I$.

For the lapse equation \eqref{n.eq}, we take the trace of \eqref{k.eq}, use the condition \eqref{parab.gauge}, multiply both sides with $n$, and plug in \eqref{Hamconst}. 

The evolution equation \eqref{eIi.eq} for the frame coefficients is a direct consequence of the identity \eqref{ei} and the propagation condition \eqref{De0eI}:
\begin{align*}
[e_0,e_I]=&\,{\bf D}_{e_0}e_I-{\bf D}_{e_I}e_0=n^{-1}(e_In)e_0+k_{IC}e_C=n^{-1}(e_In)e_0+k_{IC}e_C^i\partial_i,\\
[e_0,e_I]=&\,[n^{-1}\partial_t,e_I]=n^{-1}[\partial_t,e_I]+n^{-1}(e_In)e_0=n^{-1}(\partial_t e^i_I)\partial_i+n^{-1}(e_In)e_0.
\end{align*}

For \eqref{gamma.eq}, we use the formula \eqref{kIJgammaIJB} for $\gamma_{IJB}$, the propagation condition \eqref{De0eI}, and the Codazzi equations to compute its $e_0$ derivative:
\begin{align*}
e_0\gamma_{IJB}=&\,{\bf g}({\bf D}_{e_0}({\bf D}_{e_I}e_J),e_B)+{\bf g}({\bf D}_{e_I}e_J,{\bf D}_{e_0}e_B)\\
=&\,{\bf g}({\bf D}^2_{e_0e_I}e_J,e_B)+{\bf g}({\bf D}_{{\bf D}_{e_0}e_I}e_J,e_B)+n^{-1}(e_Bn)k_{IJ}\\
=&\,{\bf Riem}(e_0,e_I,e_B,e_J)+{\bf g}({\bf D}^2_{e_Ie_0}e_J,e_B)+n^{-2}(e_In)(e_Jn){\bf g}(e_0,e_B)+n^{-1}(e_Bn)k_{IJ}\\
=&\,{\bf Riem}(e_B,e_J,e_0,e_I)+{\bf g}({\bf D}_{e_I}({\bf D}_{e_0}e_J),e_B)-{\bf g}({\bf D}_{{\bf D}_{e_I}e_0}e_J,e_B)+n^{-1}(e_Bn)k_{IJ}\\
=&\,\nabla_{e_B}k_{JI}-\nabla_{e_J}k_{BI}+{\bf g}({\bf D}_{e_I}[n^{-1}(e_Jn)e_0],e_B)+k_{IC}\gamma_{CJB}+n^{-1}(e_Bn)k_{IJ}\\
=&\,\nabla_{e_B}k_{JI}-\nabla_{e_J}k_{BI}-n^{-1}(e_Jn)k_{BI}+k_{IC}\gamma_{CJB}+n^{-1}(e_Bn)k_{IJ}
\end{align*}
We arrive at the desired equation after expanding the covariant derivative terms in the last RHS.

To derive the fluid equations \eqref{v0.eq}-\eqref{rho.eq}, we expand the covariant operations:
\begin{align}
\label{divv}
{\bf D}^\mu v_\mu=&\,{\bf m}^{\mu\nu}{\bf g}({\bf D}_{e_\mu}(v^\alpha e_\alpha),e_\nu)=-e_0v_0+e_Cv_C+v_0\text{tr}k+v_D\gamma_{CDC}+n^{-1}(e_Cn)v_C,\\
\label{Dvv0}({\bf D}_vv)_0=&\,{\bf g}({\bf D}_{v^\mu e_\mu}(v^\nu e_\nu),e_0)
=-v_0 (e_0 v_0)+v_C(e_Cv_0)+v_C v_Dk_{CD}+n^{-1}(e_Cn)v_0v_C,\\
\label{DvvI}({\bf D}_vv)_I=&\,{\bf g}({\bf D}_{v^\mu e_\mu}(v^\nu e_\nu),e_I)=-v_0(e_0 v_I)+v_C(e_Cv_I)+n^{-1}(e_In)v_0^2
+v_C v_0 k_{CI}+v_C v_D \gamma_{CD I},
\end{align}
and plug them in \eqref{divT.v}-\eqref{divT.rho}.
\end{proof}
Although the system \eqref{v0.eq}-\eqref{rho.eq} is symmetric hyperbolic in $v_0,v_I,\rho$, we find it more suitable in the derivations of the main estimates for the latter variables to eliminate $v_0$ in favor of $v_I,\rho$ using the identity $-v_0^2+v_Iv_I=-\rho^{2r_s}$. 
\begin{lemma}\label{lem:red.eq2}
The renormalized density $\rho^{2r_s}$ satisfies the equation:
\begin{align}\label{rho.eq2}
\notag&\big(\frac{1-2r_s}{2r_s}+\frac{1}{2}\frac{\rho^{2r_s}}{v_0^2}\big)e_0( \rho^{2r_s})
+\big(\frac{v_Cv_I}{v_0^2}k_{CI}-\mathrm{tr}k\big)\rho^{2r_s}\\
=&-\frac{\rho^{2r_s}}{v_0^2}\frac{v_Iv_D}{v_0}e_Dv_I
+\frac{\rho^{2r_s}}{v_0}e_Cv_C+\big(\frac{1-2r_s}{2r_s}-\frac{1}{2}\frac{\rho^{2r_s}}{v_0^2}\big)\frac{v_C}{v_0}e_C(\rho^{2r_s})\\
\notag&-\frac{\rho^{2r_s}}{v_0^2}\frac{v_I}{v_0}
\big\{n^{-1}(e_In)v_0^2+\gamma_{CDI}v_Cv_D\big\}
+\frac{\rho^{2r_s}}{v_0}\big\{n^{-1}(e_Cn)v_C+\gamma_{CDC}v_D\big\}.
\end{align}
\end{lemma}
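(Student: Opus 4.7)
The plan is to eliminate $v_0$ time-derivatives from equation \eqref{rho.eq} by combining it with the normalization identity $-v_0^2 + v_Cv_C = -\rho^{2r_s}$ and the evolution equation \eqref{vI.eq} for the spatial components $v_I$. Differentiating the normalization along $e_0$ yields
\begin{align*}
e_0 v_0 \;=\; \frac{v_I}{v_0}\, e_0 v_I \;+\; \frac{1}{2 v_0}\, e_0(\rho^{2r_s}),
\end{align*}
while solving \eqref{vI.eq} for $v_0 \,e_0 v_I$ (using $v^\alpha e_\alpha = -v_0 e_0 + v_C e_C$) produces an expression free of $e_0$ derivatives of the fluid variables, involving only the spatial derivatives $e_C v_I$, $e_I(\rho^{2r_s})$ and undifferentiated factors of $k$, $\gamma$, $n$, $v_\mu$, $\rho^{2r_s}$. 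Substituting the second identity into the first expresses $e_0 v_0$ purely in terms of $e_0(\rho^{2r_s})$ and spatial quantities.

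Next, I would rewrite the logarithmic term in \eqref{rho.eq} using $(1-2r_s)\, v^\alpha e_\alpha \log \rho = \frac{1-2r_s}{2r_s\,\rho^{2r_s}}\, v^\alpha e_\alpha(\rho^{2r_s})$, multiply the equation through by $\rho^{2r_s}$, substitute the above expression for $e_0 v_0$, and divide by $-v_0$. The two resulting $e_0(\rho^{2r_s})$ contributions then assemble with coefficient $\tfrac{1-2r_s}{2r_s} + \tfrac{1}{2}\tfrac{\rho^{2r_s}}{v_0^2}$, matching the left-hand side of \eqref{rho.eq2}. The $k_{CI} v_0 v_C$ piece inside the substituted $v_0 e_0 v_I$ pairs with $-\mathrm{tr}k \cdot \rho^{2r_s}$ (coming from the $v_0 \mathrm{tr}k$ term of \eqref{rho.eq}) to produce the factor $\big(\tfrac{v_C v_I}{v_0^2} k_{CI} - \mathrm{tr}k\big)\rho^{2r_s}$ isolated on the left of \eqref{rho.eq2}.

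All remaining terms distribute naturally onto the right-hand side: the $v_C e_C(\rho^{2r_s})$ pieces coming from the logarithmic term and from the $\tfrac{1}{2}e_I(\rho^{2r_s})$ inside $v_0 e_0 v_I$ combine into $\big(\tfrac{1-2r_s}{2r_s} - \tfrac{1}{2}\tfrac{\rho^{2r_s}}{v_0^2}\big)\tfrac{v_C}{v_0}\, e_C(\rho^{2r_s})$; the $v_I v_C e_C v_I$ piece yields $-\tfrac{\rho^{2r_s}}{v_0^2}\tfrac{v_I v_D}{v_0}\, e_D v_I$; and the lapse and connection contributions regroup into the remaining two brace expressions of \eqref{rho.eq2}. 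The derivation is entirely algebraic with no analytic subtlety, so I do not anticipate any real obstacle. The only care required is to track the signs carefully through the $-v_0 e_0$ component of $v^\alpha e_\alpha$ and the final division by $-v_0$, and to recognize that the apparently mismatched factors of $v_0$ in the denominators all organize themselves around the common quantity $\rho^{2r_s}/v_0^2$ featured in the statement.
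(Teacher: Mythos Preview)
Your proposal is correct and follows essentially the same route as the paper: rewrite \eqref{rho.eq} in terms of $\rho^{2r_s}$, use the differentiated normalization $e_0 v_0 = \tfrac{v_I}{v_0}e_0 v_I + \tfrac{1}{2v_0}e_0(\rho^{2r_s})$ to eliminate $e_0 v_0$, then substitute the expression for $e_0 v_I$ obtained from \eqref{vI.eq}, and rearrange. The paper carries this out in exactly that order, so there is nothing materially different to compare.
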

\begin{proof}
Rewrite the equation \eqref{rho.eq} as
\begin{align*}
\frac{1-2r_s}{2r_s}(-v_0 e_0+v_Ce_C)\rho^{2r_s}+v_0\mathrm{tr}k\rho^{2r_s}=&\,\rho^{2r_s}e_0v_0-\rho^{2r_s}e_Cv_C-\rho^{2r_s}\big\{n^{-1}(e_Cn)v_C+\gamma_{CDC}v_D\big\}
\end{align*}
and replace $e_0v_0$ by 
\begin{align}\label{e0v0}
e_0v_0=\frac{v_I}{v_0}e_0v_I+\frac{1}{2}e_0(\rho^{2r_s}).
\end{align}
Using the evolution equation \eqref{vI.eq} to replace $e_0v_I$ with
\begin{align}\label{e0vI}
e_0 v_I=\frac{1}{v_0}\big\{k_{CI}v_0v_C+\frac{1}{2}e_I(\rho^{2r_s})+n^{-1}(e_In)v_0^2+\gamma_{CDI}v_Cv_D\big\}
\end{align}
and rearranging the terms in the resulting equation gives \eqref{rho.eq2}.
\end{proof}

\subsection{The background tilted homogeneous solutions and the associated reduced variables}

In this subsection, we provide the precise asymptotic behavior and estimates satisfied by the homogeneous solutions.
\begin{lemma}\label{lem:hom.est}
There exist homogeneous solutions to the Einstein-Euler equations, of the form \eqref{metric.hom}-\eqref{fluid.hom}, such that the functions $G_i(t),G(t),\theta(t),\widetilde{\rho}(t):[T,+\infty)\to\mathbb{R}$ satisfy the estimates:
\begin{align}\label{hom.est}
\begin{split}
|\partial_t^NG_i(t)-G_i^\infty H^Ne^{Ht}|\leq C_N e^{-Ht},
\qquad|\partial_t^NG(t)|\leq C_N e^{-\frac{1}{2}Ht},\\
\bigg|\partial_t^N\bigg[\theta(t)-\frac{3c_s^2-1}{1-c_s^2}Ht+\theta_0\bigg]\bigg|\leq C_Ne^{-2Ht},\qquad
\bigg|\partial_t^N\bigg[\widetilde{\rho}(t)-\widetilde{\rho}^\infty e^{-2\frac{1+c_s^2}{1-c_s^2}Ht}\bigg]\bigg|\leq C_Ne^{-4\frac{1+c_s^2}{1-c_s^2}Ht},
\end{split}
\end{align}
for every $N\in\mathbb{N}$, $\frac{1}{3}<c_s^2<1$, where $G_i^\infty,G^\infty,\widetilde{\rho}^\infty>0$ are positive constants and $C_N$ also depends on $c_s$. 
\end{lemma}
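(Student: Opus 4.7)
The plan is to substitute the ansatz \eqref{metric.hom}--\eqref{fluid.hom} into the Einstein--Euler system \eqref{EE}, \eqref{divT.v}, \eqref{divT.rho} and reduce the problem to an autonomous ODE system for the six unknowns $(G_1,G_2,G_3,G,\theta,\widetilde{\rho})$ on $[T,+\infty)$. Using the Bianchi IX commutation relations $[Y_i,Y_j]=2\epsilon_{ij}{}^{l}Y_l$ from \eqref{[Yi,Yj]}, one can compute the Ricci tensor of $\widetilde{\bf g}$ in closed form, and the tilt ansatz with only a $Y_1$-component reduces the fluid divergence identities \eqref{divT.v}--\eqref{divT.rho} to two ODEs for $\theta$ and $\widetilde\rho$. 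The Hamiltonian and momentum constraints \eqref{Hamconst.intro}--\eqref{momconst.intro} then provide algebraic/first order relations that are preserved by the flow, leaving a well-posed evolution in the free parameters.

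The key step is a change to renormalized variables that strips out the anticipated leading exponentials, e.g.
\begin{align*}
\bar G_i := e^{-Ht}G_i,\qquad \bar G := e^{\frac{1}{2}Ht}G,\qquad \bar\theta := \theta - A_s H t,\qquad \bar\rho := e^{2\frac{1+c_s^2}{1-c_s^2}Ht}\widetilde\rho,
\end{align*}
together with their first $t$-derivatives, cast as a first-order system. In these variables the target asymptotic state corresponds to a fixed point $P_\infty=(G_i^\infty,\theta_0,\widetilde\rho^\infty,0,\dots)$ at $t=+\infty$, and a direct computation of the linearization about $P_\infty$ identifies the eigenvalues with precisely the decay rates appearing in \eqref{hom.est}. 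The three parameters $G_i^\infty$ correspond to the null modes (the zero eigenvalues that label the asymptotic state), while the remaining modes are strictly stable with rates at least $-H$, $-\tfrac12 H$, $-2H$ and $-4\tfrac{1+c_s^2}{1-c_s^2}H$ respectively for $c_s^2\in(1/3,1)$.

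Existence of a solution realizing $P_\infty$ at infinity then follows by a standard stable manifold (contraction) argument at $t=+\infty$. Concretely, I would rewrite the ODE system as a system of integral equations by integrating each stable mode backwards from $+\infty$ against the associated Green's function, and solve this fixed point equation via Banach's theorem in a closed ball of a weighted space $C^0_w([T,+\infty))$ with the appropriate exponential weights, for $T$ sufficiently large. This produces the $N=0$ case of \eqref{hom.est}. The higher order estimates, for arbitrary $N\in\mathbb N$, are then obtained by bootstrapping: once $C^0$ decay is known one plugs back into the ODEs to read off the decay of the first $t$-derivative of each variable, differentiates the system and iterates, the rates being preserved at each step because the linearization is autonomous.

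The main obstacle I anticipate is verifying the spectral picture of the linearization in the presence of the slowly decaying off-diagonal coefficient $G$ and of the tilt $\theta$. Because $\bar G$ decays only like $e^{-\frac12 Ht}$, a priori quadratic couplings of the form $\bar G^2$ or $\bar G\,\bar\theta$ could in principle contaminate the decay rates of the other variables; one must check carefully that these couplings enter the linearized system only through strictly faster-decaying combinations so that the claimed $e^{-Ht}$, $e^{-2Ht}$ and $e^{-4\frac{1+c_s^2}{1-c_s^2}Ht}$ rates survive. A second, more technical point is to consistently enforce the Hamiltonian and momentum constraints on the asymptotic data $(G_i^\infty,\theta_0,\widetilde\rho^\infty)$, reducing the number of genuinely free parameters and ensuring the constructed solution satisfies the full system, not just the evolution part; this amounts to verifying that the constraints are first integrals of the reduced ODE, which follows by a standard computation using \eqref{eq.motion} and the second Bianchi identity.
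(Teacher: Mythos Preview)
Your proposal is correct and follows essentially the same strategy as the paper: a backwards construction from $t=+\infty$ via a contraction/Picard argument in exponentially weighted spaces, followed by constraint propagation. The only notable difference is the choice of unknowns. You work directly with the metric and tilt variables $(G_i,G,\theta,\widetilde\rho)$ and their first derivatives, whereas the paper passes to the frame/connection variables $\widetilde k_{IJ},\widetilde\gamma_{IJB},\widetilde e_I^i,\widetilde v_1,\widetilde\rho$ (writing $\widetilde k_{II}=-H+R_{II}$), which renders the system first-order from the outset and makes the Picard iteration immediate; the constraints are then shown to satisfy homogeneous linear ODEs $\partial_t\mathcal C_0-2\mathrm{tr}\widetilde k\,\mathcal C_0=0$, $\partial_t\mathcal C_1-(\mathrm{tr}\widetilde k+\widetilde k_{11})\mathcal C_1=0$, so vanishing of the appropriate expansion coefficient at infinity forces $\mathcal C_\mu\equiv0$. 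Your concern about the slow mode $\bar G\sim e^{-\frac12 Ht}$ is well placed but harmless: in the frame variables this appears only through $\widetilde\gamma_{221}=2G_2^{-2}G_1^{-1}G^2=\mathcal O(e^{-4Ht})$ and $\widetilde e_3^2=\mathcal O(e^{-4Ht})$, so the couplings are in fact faster-decaying than the other modes and do not contaminate the rates.
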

\begin{proof}
It is contained in Lemma \ref{lem:app.hom}.
\end{proof}
Given a homogeneous solution as above, we make the following choice of background orthonormal frame: 
\begin{align}\label{frame.hom}
\widetilde{e}_1=G^{-1}_1(t)Y_1,\qquad \widetilde{e}_2=G_2^{-1}(t)Y_2,\qquad\widetilde{e}_3=\frac{Y_3-G^2(t)G^{-2}_2(t)Y_2}{\sqrt{G_3^2(t)-G^4(t)G_2^{-2}(t)}},\qquad\widetilde{e}_0=\partial_t,
\end{align}
and compute its associated reduced variables.
\begin{lemma}\label{lem:red.var.hom}
The reduced variables of the fixed background solution behave like
\begin{align}\label{red.var.hom}
\notag
\widetilde{k}_{IJ}=-\delta_{IJ}H+\mathcal{O}(e^{-2Ht}),\qquad \widetilde{\gamma}_{IJB}=\mathcal{O}(e^{-Ht}),\qquad\widetilde{e}^i_I=\mathcal{O}(e^{-Ht}),\\
 \widetilde{n}=1,\qquad\widetilde{\rho}^{1-2r_s}=(\widetilde{\rho}^{1-2r_s})^{\infty,2}e^{-2Ht}+\mathcal{O}(e^{-4Ht}),\qquad 1-2r_s=\frac{1-c_s^2}{1+c_s^2},\\
\notag\widetilde{v}_0=\widetilde{\rho}^{r_s}\widetilde{u}_0=\widetilde{v}^{\infty,1}_0e^{-Ht}+\mathcal{O}(e^{-3Ht}),\qquad
\widetilde{v}_I=\widetilde{\rho}^{r_s}\widetilde{u}_I=\delta_{1I}[\widetilde{v}^{\infty,1}_1e^{-Ht}+\mathcal{O}(e^{-3Ht})],
\end{align}
where $(\widetilde{\rho}^{1-2r_s})^{\infty,2}>0$, $|\widetilde{v}_0^{\infty,1}|=|\widetilde{v}_0^{\infty,1}|>0$ are constant coefficients of the above leading order terms.
\end{lemma}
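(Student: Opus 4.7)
The lemma amounts to reading off the reduced variables from the explicit expressions \eqref{metric.hom}, \eqref{fluid.hom}, \eqref{frame.hom} and then substituting the asymptotic expansions of Lemma \ref{lem:hom.est}. The plan is to treat the geometric and fluid quantities separately; in each case I would Taylor-expand the relevant rational/exponential function of $G_i(t), G(t), \theta(t), \widetilde{\rho}(t)$ to the stated order in $e^{-Ht}$, using the sharp estimates \eqref{hom.est}.

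On the geometric side, $\widetilde{n}=1$ is immediate from the parabolic gauge \eqref{parab.gauge} applied to the background itself (the right-hand side vanishes identically). Next, from \eqref{frame.hom}, $G_i(t)\sim G_i^\infty e^{Ht}$, and $G(t)=\mathcal{O}(e^{-Ht/2})$, a direct expansion gives $\widetilde{e}_I^i = \mathcal{O}(e^{-Ht})$; the only mild subtlety here is that $\widetilde{e}_3$ involves $(G_3^2-G^4G_2^{-2})^{-1/2}$, which I would Taylor expand using that $G^4G_2^{-2}=\mathcal{O}(e^{-4Ht})$ is a genuine subleading perturbation of $G_3^2\sim e^{2Ht}$. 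For $\widetilde{k}_{IJ}$, the Koszul identity together with $[Y_i,\partial_t]=0$ and vanishing shift reduces the defining formula \eqref{kIJgammaIJB} to $\widetilde{k}_{IJ}=-\frac{1}{2}\widetilde{e}_I^i\widetilde{e}_J^j\partial_tg_{ij}$, where $g_{ij}=G_i^2\delta_{ij}+G^2(\delta_i^2\delta_j^3+\delta_i^3\delta_j^2)$; applying \eqref{hom.est} to the time derivative and using orthonormality yields the leading $-\delta_{IJ}H$ with an $\mathcal{O}(e^{-2Ht})$ remainder. The connection coefficients $\widetilde{\gamma}_{IJB}$ come from the orthonormal-frame version of Koszul, where the terms $e_Ig(e_J,e_B)$ vanish and only commutators survive; since the $\widetilde{e}_I^i$ depend only on $t$, one has $[\widetilde{e}_I,\widetilde{e}_J]=2\widetilde{e}_I^i\widetilde{e}_J^j\epsilon_{ij}{}^lY_l$, and since $|Y_l|_g=\mathcal{O}(e^{Ht})$ while $\widetilde{e}_I^i=\mathcal{O}(e^{-Ht})$, the commutators have $g$-norm $\mathcal{O}(e^{-Ht})$, giving the stated bound.

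On the fluid side, \eqref{fluid.hom} together with \eqref{frame.hom} gives $\widetilde{u}=\cosh\theta\,\widetilde{e}_0+\sinh\theta\,\widetilde{e}_1$, hence $\widetilde{u}_0=-\cosh\theta$, $\widetilde{u}_1=\sinh\theta$, and $\widetilde{u}_2=\widetilde{u}_3=0$. I would then multiply by $\widetilde{\rho}^{r_s}$ and combine the exponents: \eqref{hom.est} yields $\cosh\theta,\sinh\theta\sim\text{const}\cdot e^{A_sHt}$ and $\widetilde{\rho}^{r_s}\sim(\widetilde{\rho}^\infty)^{r_s}e^{-\frac{2r_s(1+c_s^2)}{1-c_s^2}Ht}$, and the algebraic identity $A_s=-1+\frac{2r_s}{1-2r_s}$ from \eqref{key.param} collapses the combined exponent to exactly $-Ht$, while the subleading corrections in \eqref{hom.est} contribute at order $e^{-3Ht}$. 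Similarly, raising the density expansion to the power $1-2r_s=\frac{1-c_s^2}{1+c_s^2}$ converts the exponent $-\frac{2(1+c_s^2)}{1-c_s^2}Ht$ to exactly $-2Ht$, yielding $\widetilde{\rho}^{1-2r_s}=(\widetilde{\rho}^{1-2r_s})^{\infty,2}e^{-2Ht}+\mathcal{O}(e^{-4Ht})$.

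The proof is mechanical; the main (minor) obstacle is bookkeeping. Several of the claimed remainder orders ($\mathcal{O}(e^{-2Ht})$ for $\widetilde{k}_{IJ}$, $\mathcal{O}(e^{-3Ht})$ for $\widetilde{v}_\mu$) require keeping the first correction in multiple products of the expansions \eqref{hom.est} simultaneously—in particular, the sharpness of the $\mathcal{O}(e^{-Ht})$ remainder for $G_i(t)-G_i^\infty e^{Ht}$ is used to avoid picking up an $\mathcal{O}(e^{-Ht})$ error in $\widetilde{k}_{IJ}$ that would be too weak for the later energy estimates to close.
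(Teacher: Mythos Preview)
Your approach is essentially the paper's: the paper simply cites the expansions \eqref{hom.exp} from Appendix~\ref{app:ODE}, where the reduced variables $\widetilde{k}_{IJ},\widetilde{\gamma}_{IJB},\widetilde{e}_I^i,\widetilde{v}_\mu,\widetilde{\rho}$ are constructed directly as solutions to the reduced ODE system \eqref{k11.hom.eq}--\eqref{rho.hom.eq} via Picard iteration from $t=+\infty$; you instead start from the metric/fluid ansatz together with the $G_i,G,\theta,\widetilde\rho$ estimates of Lemma~\ref{lem:hom.est} and compute the reduced variables from those. For the geometric quantities your route is fine and is exactly the content of \eqref{gamma.hom2}--\eqref{tilde:e=G}.

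There is one genuine oversight on the fluid side. When you expand $\widetilde v_\mu=\widetilde\rho^{\,r_s}\{\cosh\theta,\sinh\theta\}$, the hyperbolic functions are \emph{not} of the form $Ce^{A_sHt}(1+\mathcal O(e^{-2Ht}))$: the branch $\tfrac12 e^{-\theta}\sim e^{-A_sHt}$ survives, and after multiplication by $\widetilde\rho^{\,r_s}\sim e^{-(1+A_s)Ht}$ it contributes at order $e^{-(1+2A_s)Ht}$. In the regime of the paper, $A_s\in(0,\tfrac12)$, this is strictly slower than the $\mathcal O(e^{-3Ht})$ remainder you claim, so the sentence ``the subleading corrections in \eqref{hom.est} contribute at order $e^{-3Ht}$'' does not establish the stated remainder. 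The paper's route sidesteps this for $\widetilde v_1$ because the expansion is read off the decoupled ODE $\partial_t\widetilde v_1=\widetilde k_{11}\widetilde v_1$ with $\widetilde k_{11}=-H+\mathcal O(e^{-2Ht})$, which gives $\widetilde v_1=\widetilde v_1^{\infty,1}e^{-Ht}(1+\mathcal O(e^{-2Ht}))$ directly; you can patch your argument the same way rather than going through $\theta$.
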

\begin{proof}
See \eqref{hom.exp} in Appendix \ref{app:ODE} and Lemma \ref{lem:app.hom}.
\end{proof}
\subsection{Initial data for the perturbed variables on $\Sigma_T$}\label{subsec:red.ID}
The initial data set ($\mathring{g},\mathring{k},\mathring{\rho},\mathring{\underline{v}},\mathring{v}_0$) on $\Sigma$ induces initial data for the reduced variables $k_{IJ},\gamma_{IJB},e_I^i,n,v_0,v_I,\rho^{2r_s}$ on $\Sigma_T$, after choosing an initial orthonormal frame $e_I$. Indeed, we have the freedom to choose $t=T$, ie. $\Sigma_T$, to be the image of the embedding of $\Sigma$ in the development $(\mathcal{M},{\bf g},\rho,v)$. Identifying the underlying manifolds of the homogeneous and perturbed solution, according to the 1+3 splitting \eqref{metric}, we can then compare the two mean curvatures $\text{tr}\widetilde{k},\text{tr}k$ and obtain the initial datum of $n$ via \eqref{parab.gauge}. The initial data for $e_I^i$ can be any smooth perturbation of the homogeneous frame components $\widetilde{e}_I^i$ \eqref{frame.hom}. For example, we may consider the frame $e_I$ obtained by applying the Gram-Schmidt process to $\widetilde{e}_I$ relative to $g$. Since $g_{ij}$ is assumed to be sufficiently close to $\widetilde{g}_{ij}$ initially, so will the corresponding frame components $e_I^i,\widetilde{e}_I^i$. Having defined the frame $e_I$, the initial data for the rest of the reduced variables are readily induced by contracting the former with the original initial data set. In order to satisfy the initial closeness assumption \eqref{thm.main.init}, sufficient smallness in a norm of comparable order for $g_{ij}-\widetilde{g}_{ij}$ is needed.

\subsection{The resulting equations for the perturbed variables minus the homogeneous background variables}

First, notice that combining the gauge condition \eqref{parab.gauge} with \eqref{red.var.hom} gives the following relation:
\begin{align}\label{trk}
\text{tr}k=-\big[3H+\mathcal{O}(e^{-2Ht})+n-1\big].
\end{align}
In this subsection, we derive the equations satisfied by the differences:
\begin{align}\label{diff.var}
\begin{split}
\widehat{k}_{IJ}=k_{IJ}-\widetilde{k}_{IJ},\qquad
\widehat{\gamma}_{IJB}=\gamma_{IJB}-\widetilde{\gamma}_{IJB},\qquad
\widehat{n}=n-\widetilde{n}=n-1,\qquad
\widehat{e}^i_I=e^i_I-\widetilde{e}^i_I,\\
\widehat{v}_0=\rho^{r_s}u_0-\widetilde{\rho}^{r_s}\widetilde{u}_0,\qquad\widehat{v}_I=\rho^{r_s}u_I-\widetilde{\rho}^{r_s}\widetilde{u}_I,\qquad
\widehat{\rho^{2r_s}}=\rho^{2r_s}-\widetilde{\rho}^{2r_s},
\end{split}
\end{align}
using Lemma \ref{lem:red.eq}. More generally, we will use the notation $\widehat{\rho^\alpha}=\rho^\alpha-\widetilde{\rho}^\alpha$, for positive powers $\alpha>0$.
Note that since the background variables \eqref{red.var.hom} are homogeneous, it holds $e_I(\rho^{2r_s})=e_I(\widehat{\rho^{2r_s}})$ etc.
\begin{lemma}\label{lem:red.hat.eq}
The geometric variables $\widehat{k}_{IJ},\widehat{\gamma}_{IJB},\widehat{e}^i_I,\widehat{n}$ satisfy the evolution equations: 
\begin{align}
\label{k.hat.eq}\partial_t\widehat{k}_{IJ}+3H\widehat{k}_{IJ}=&-e_I e_J \widehat{n}+ne_C \widehat{\gamma}_{IJC}-ne_I\widehat{\gamma}_{CJC}\\
\notag&+\gamma_{IJC} e_C \widehat{n}-n\widehat{\gamma}_{CID}\gamma_{DJC}-n\widehat{\gamma}_{IJD}\gamma_{CCD}
-n\widetilde{\gamma}_{CID}\widehat{\gamma}_{DJC}
-n\widetilde{\gamma}_{IJD}\widehat{\gamma}_{CCD}\\
\notag&-(1+c_s^2)n(\widehat{\rho^{1-2r_s}} v_I v_J+\widetilde{\rho}^{1-2r_s} \widehat{v}_I v_J+\widetilde{\rho}^{1-2r_s} \widetilde{v}_I \widehat{v}_J)
-\frac{1}{2}\delta_{IJ}(1-c_s^2)\widehat{n}\rho\\
\notag&-\frac{1}{2}\delta_{IJ}(1-c_s^2)\widehat{\rho}
+\delta_{IJ}H(\widehat{n}+\widehat{n}^2)+\mathcal{O}(e^{-2Ht})\widehat{n}+\delta_{IJ}\mathcal{O}(e^{-2Ht})\widehat{n}^2\\
\notag&-\big[3H+\widehat{n}+\mathcal{O}(e^{-2Ht})\big]\widehat{n}\widehat{k}_{IJ}
-\big[\widehat{n}+\mathcal{O}(e^{-2Ht})\big]\widehat{k}_{IJ},\\
\label{gamma.hat.eq}\partial_t\widehat{\gamma}_{IJB}+H\widehat{\gamma}_{IJB}=&\,ne_B \widehat{k}_{IJ}-ne_J \widehat{k}_{BI}\\
\notag&-n\widehat{k}_{IC}\gamma_{BJC}-n\widehat{k}_{CJ} \gamma_{BIC}
+n\widehat{k}_{IC} \gamma_{JBC}
+n\widehat{k}_{BC} \gamma_{JIC}+n\widehat{k}_{IC} \gamma_{CJB}\\
\notag&-H\widehat{n}\gamma_{IJB}+\widehat{n}\mathcal{O}(e^{-2Ht})\gamma_{BJI}
+\widehat{n}\mathcal{O}(e^{-2Ht})\gamma_{BIJ}
+\widehat{n}\mathcal{O}(e^{-2Ht})\gamma_{JBI}\\
\notag&+\widehat{n}\mathcal{O}(e^{-2Ht})\gamma_{JIB}
+\widehat{n}\mathcal{O}(e^{-2Ht})\gamma_{IJB}
+\mathcal{O}(e^{-2Ht})\widehat{\gamma}_{BJI}
+\mathcal{O}(e^{-2Ht})\widehat{\gamma}_{BIJ}\\
\notag&+\mathcal{O}(e^{-2Ht})\widehat{\gamma}_{JBI}
+\mathcal{O}(e^{-2Ht})\widehat{\gamma}_{JIB}
+\mathcal{O}(e^{-2Ht})\widehat{\gamma}_{IJB}
+(e_B \widehat{n})k_{JI}-(e_J \widehat{n})k_{BI},\\
\label{eIi.hat.eq}\partial_t\widehat{e}_I^i+H\widehat{e}_I^i=&\,\widehat{n}k_{IC}e_C^i+\widehat{k}_{IC} \widehat{e}_C^i+\mathcal{O}(e^{-2Ht})\widehat{e}_I^i,\\
\label{n.hat.eq}\partial_t\widehat{n}+2H\widehat{n}=&\,e_C e_C \widehat{n}
-\gamma_{CCD}e_D\widehat{n}-n\widehat{k}_{CD}\widehat{k}_{CD}
+\mathcal{O}(e^{-2Ht})n\widehat{k}_{CC}\\
\notag&+(1+c_s^2)\big\{\widehat{n}\rho^{1-2r_s}v_Cv_C+\widehat{\rho^{1-2r_s}}v_Cv_C+\widetilde{\rho}^{1-2r_s}\widehat{v}_Cv_C
+\widetilde{\rho}^{1-2r_s}\widetilde{v}_1\widehat{v}_1\big\}\\
\notag&+\frac{3}{2}(1-c_s^2)\widehat{n}\rho+\frac{3}{2}(1-c_s^2)\widehat{\rho}
-2H\widehat{n}^2+\mathcal{O}(e^{-2Ht})\widehat{n}.
\end{align}
The fluid variables $\widehat{v}_I,\widehat{\rho^{2r_s}}$ satisfy the evolution equations:
\begin{align}
\label{vI.hat.eq}
\notag\partial_t\widehat{v}_I+H\widehat{v}_I=&\,\frac{1}{2}\frac{n}{v_0}e_I(\widehat{\rho^{2r_s}})+n\frac{v_C}{v_0}e_C\widehat{v}_I\\
&+(e_I\widehat{n})v_0+n\gamma_{CDI}\frac{v_D}{v_0}\widehat{v}_C
+n\gamma_{1DI}\frac{\widetilde{v}_1}{v_0}\widehat{v}_D
+\frac{n}{v_0}\widetilde{v}_1^2\widehat{\gamma}_{11I}\\
\notag&+\widehat{n}k_{CI}v_C+\widehat{k}_{CI}v_C+(\widetilde{k}_{CI}+\delta_{CI}H)\widehat{v}_C
\end{align}
and
\begin{align}
\label{rho.hat.eq}
\notag&\big(\frac{1-2r_s}{2r_s}+\frac{1}{2}\frac{\rho^{2r_s}}{v_0^2}\big)\partial_t(\widehat{\rho^{2r_s}})+2H\widehat{\rho^{2r_s}}\\
=&-n\frac{\rho^{2r_s}}{v_0^2}\frac{v_Iv_D}{v_0}e_D\widehat{v}_I
+n\frac{\rho^{2r_s}}{v_0}e_C\widehat{v}_C+n\big(\frac{1-2r_s}{2r_s}-\frac{1}{2}\frac{\rho^{2r_s}}{v_0^2}\big)\frac{v_C}{v_0}e_C (\widehat{\rho^{2r_s}})\\
\notag&-\frac{\rho^{2r_s}}{v_0^2}\frac{v_I}{v_0}
\big\{(e_I\widehat{n})v_0^2+n\gamma_{CDI}\widehat{v}_Cv_D
+n\gamma_{1DI}\widetilde{v}_1\widehat{v}_D+n\widehat{\gamma}_{11I}\widetilde{v}_1^2\big\}
+\frac{\rho^{2r_s}}{v_0}\big\{(e_C\widehat{n})v_C
+n\widehat{\gamma}_{CDC}v_D\big\}\\
\notag&-\frac{1}{2}\frac{\widehat{\rho^{2r_s}}}{v_0^2}\partial_t(\widetilde{\rho}^{2r_s})
+\frac{1}{2}\frac{\widetilde{\rho}^{2r_s}}{v_0^2\widetilde{v}^2_0}\partial_t(\widetilde{\rho}^{2r_s})(v_0+\widetilde{v}_0)\widehat{v}_0
-\widehat{n}\big(\frac{v_Cv_I}{v_0^2}k_{CI}-\mathrm{tr}k\big)\rho^{2r_s}
-\rho^{2r_s}\widehat{n}\\
\notag&+\mathcal{O}(e^{-2Ht})\widehat{\rho^{2r_s}}
-\frac{v_Cv_I}{v_0^2}\widehat{k}_{CI}\rho^{2r_s}
-H\frac{\widehat{\rho^{2r_s}}}{v_0^2}(\rho^{2r_s}+\widetilde{\rho}^{2r_s})
+H\frac{\widetilde{\rho}^{4r_s}}{v_0^2\widetilde{v}_0^2}(v_0+\widetilde{v}_0)\widehat{v}_0\\
\notag&-(\widetilde{k}_{CI}+\delta_{CI}H)
\big(\frac{v_Cv_I}{v_0^2}\widehat{\rho^{2r_s}}+\frac{\widehat{v}_Cv_I}{v_0^2}\widetilde{\rho}^{2r_s}+\frac{\widetilde{v}_C\widehat{v}_I}{v_0^2}\widetilde{\rho}^{2r_s}+\frac{\widetilde{v}_C\widetilde{v}_I}{v_0^2\widetilde{v}_0^2}(v_0+\widetilde{v}_0)\widehat{v}_0\widetilde{\rho}^{2r_s}\big)
\end{align}
where $v_0^2=v_Cv_C+\rho^{2r_s}$.

Also, the following constraint equation holds:
\begin{align}\label{mom.hat.const}
e_C \widehat{k}_{CI}
=&-e_I\widehat{n}-  
\widehat{k}_{ID} \gamma_{CDC}
+\widehat{k}_{CD} \gamma_{CID}-(\widetilde{k}_{ID}+\delta_{ID}H)\widehat{\gamma}_{CDC}+(\widetilde{k}_{CD}+\delta_{CD}H)\widehat{\gamma}_{CID}\\
\notag&-(1+c_s^2)\big\{\widehat{\rho^{1-2r_s}}v_0v_I+\widetilde{\rho}^{1-2r_s}\widehat{v}_0v_I+\widetilde{\rho}^{1-2r_s}\widetilde{v}_0\widehat{v}_I\big\}.
\end{align}
\end{lemma}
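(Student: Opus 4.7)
The strategy is entirely mechanical: each hat variable solves an equation obtained by subtracting the corresponding homogeneous equation from the general equation of Lemma \ref{lem:red.eq} (or Lemma \ref{lem:red.eq2} in the case of $\widehat{\rho^{2r_s}}$), and then rewriting every background occurrence via the asymptotic decomposition of Lemma \ref{lem:red.var.hom}. The homogeneous backgrounds satisfy the same equations because Lemma \ref{lem:hom.est} produces solutions of the Einstein-Euler system, so the subtraction is legitimate; the constants $3H,H,2H$ on the left-hand sides come from $\widetilde{k}_{IJ}=-\delta_{IJ}H+\mathcal{O}(e^{-2Ht})$, $\mathrm{tr}\widetilde{k}=-3H+\mathcal{O}(e^{-2Ht})$, and $\widetilde{n}=1$, $\widetilde{\gamma}_{IJB}=\mathcal{O}(e^{-Ht})$, while the remainder terms $\mathcal{O}(e^{-2Ht})$ scattered throughout the statement collect all the background contributions that do not produce structural principal terms. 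The gauge relation \eqref{parab.gauge} is used once, in the form \eqref{trk}, to eliminate $\mathrm{tr}k$ in the trace terms (this is what produces the $3H\widehat{k}_{IJ}$ on the left of \eqref{k.hat.eq} and the $2H\widehat{n}$ on the left of \eqref{n.hat.eq}).

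For the geometric variables, I would process \eqref{k.eq}, \eqref{gamma.eq}, \eqref{eIi.eq}, \eqref{n.eq} in turn. In each case, write $k=\widetilde{k}+\widehat{k}$ etc., expand, and cancel the background equation (homogeneous spatial derivatives of $\widetilde{n}=1$, $\widetilde{\gamma}$, $\widetilde{k}$ vanish, leaving only $\partial_t\widetilde{k},\partial_t\widetilde{\gamma}$-type terms that are already absorbed in the background identity). The nonlinear quadratic pieces split as $AB-\widetilde{A}\widetilde{B}=\widehat{A}B+\widetilde{A}\widehat{B}$; any factor $\widetilde{A}=\mathcal{O}(e^{-\lambda Ht})$ produces one of the $\mathcal{O}(e^{-2Ht})$ coefficients in the stated equations. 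The lapse equation \eqref{n.hat.eq} requires one extra care: after subtracting $\partial_t\mathrm{tr}\widetilde{k}$, one must use \eqref{Hamconst} only implicitly through the derivation of \eqref{n.eq}, so no further manipulation is needed beyond splitting $\rho^{1-2r_s}v_Cv_C=\widetilde{\rho}^{1-2r_s}\widetilde{v}_1^2+(\text{hat terms})$ and writing each hat product out explicitly.

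The fluid equations are the real work, and the $\widehat{\rho^{2r_s}}$ equation \eqref{rho.hat.eq} is the main obstacle. For \eqref{vI.hat.eq}, start from \eqref{vI.eq}, divide through by $v_0$ (consistent with our intended use of the identity $v_0^2=v_Cv_C+\rho^{2r_s}$ rather than the $v_0$ evolution equation \eqref{v0.eq}), subtract the background, and split the bilinear $\gamma v v$ terms carefully; the $\widetilde{\gamma}_{11I}$ factors combine with the nonvanishing background fluid component $\widetilde{v}_1$ to produce the $\widehat{\gamma}_{11I}\widetilde{v}_1^2$ term. For \eqref{rho.hat.eq}, I start from \eqref{rho.eq2}, subtract its homogeneous version, and then face the subtlety that the coefficient $\tfrac{1-2r_s}{2r_s}+\tfrac12\tfrac{\rho^{2r_s}}{v_0^2}$ depends on the full solution: I write $\partial_t(\rho^{2r_s})=\partial_t(\widehat{\rho^{2r_s}})+\partial_t(\widetilde{\rho}^{2r_s})$ and then split each background coefficient $\tfrac{\widetilde{\rho}^{2r_s}}{\widetilde{v}_0^2}$ against $\tfrac{\rho^{2r_s}}{v_0^2}$ using
\[
\frac{\rho^{2r_s}}{v_0^2}-\frac{\widetilde{\rho}^{2r_s}}{\widetilde{v}_0^2}
=\frac{\widehat{\rho^{2r_s}}}{v_0^2}-\frac{\widetilde{\rho}^{2r_s}}{v_0^2\widetilde{v}_0^2}(v_0+\widetilde{v}_0)\widehat{v}_0,
\]
which is where the conspicuous $(v_0+\widetilde{v}_0)\widehat{v}_0$ factors in the stated equation come from. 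The $H$-multiplied correction terms on the RHS then arise after using the background ODE for $\widetilde{\rho}^{2r_s}$ (a consequence of \eqref{rho.eq2} in the homogeneous case, which gives $\partial_t(\widetilde{\rho}^{2r_s})\sim -\tfrac{4r_s}{1-2r_s}H\widetilde{\rho}^{2r_s}$, matching the $2H\widehat{\rho^{2r_s}}$ on the LHS once the coefficient is accounted for).

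Finally, the momentum constraint \eqref{mom.hat.const} is obtained directly from \eqref{momconst} by subtracting its homogeneous counterpart: the background constraint holds with $\widetilde{n}=1$, $\widetilde{\gamma}=\mathcal{O}(e^{-Ht})$ and only $\widetilde{v}_1\neq 0$, and expanding the bilinears $k\gamma$ and $\rho^{1-2r_s}v_0v_I$ as hat-plus-tilde products yields exactly the listed terms, with the $(\widetilde{k}_{ID}+\delta_{ID}H)$ structure reflecting the decomposition $\widetilde{k}_{IJ}=-\delta_{IJ}H+\mathcal{O}(e^{-2Ht})$ from Lemma \ref{lem:red.var.hom}. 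I expect no conceptual hurdle here; the entire lemma is a careful bookkeeping of the subtraction, and the only place where one can easily go astray is in the $\widehat{\rho^{2r_s}}$ equation, where mismanaging the $v_0$--$\widetilde{v}_0$ splits would produce spurious principal terms.
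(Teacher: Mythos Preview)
Your proposal is correct and follows essentially the same route as the paper: subtract the homogeneous equations (valid because the background solves the same reduced system), use that spatial derivatives of tilde quantities vanish, split bilinears as $AB-\widetilde{A}\widetilde{B}=\widehat{A}B+\widetilde{A}\widehat{B}$, and absorb background factors via Lemma~\ref{lem:red.var.hom} and \eqref{trk}. One point worth making explicit that you gloss over: in the fluid equations, the purely homogeneous pieces of the $\gamma v v$ terms cancel not just ``because the background is a solution'' but specifically because $\widetilde{\gamma}_{11I}=0$ and $\widetilde{\gamma}_{CDC}=0$ (see \eqref{gamma.hom2}); this is why the surviving $\widehat{\gamma}_{11I}\widetilde{v}_1^2$ and $\widehat{\gamma}_{CDC}v_D$ terms appear with those particular index patterns and no others.
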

\begin{remark}
Notice that the equations \eqref{vI.hat.eq}-\eqref{rho.hat.eq} are not symmetric hyperbolic in $\widehat{v}_I,\widehat{\rho^{2r_s}}$. Nevertheless, they are symmetrizable, that is to say, energy estimates can be derived by considering suitable combinations of $\widehat{v}_I,\widehat{\rho^{2r_s}}$ at top order, see Section \ref{S:top.est}. 
\end{remark}
\begin{proof}
The equations for the differences follow from the ones in Lemmas \ref{lem:red.eq}, \ref{lem:red.eq2} and the fact that the background variables are a solution to the reduced system of equations and that they are homogeneous. For example, using \eqref{k.eq} we have
\begin{align*}
\partial_t\widehat{k}_{IJ}=&-n(n-1-\mathrm{tr}\widetilde{k})k_{IJ}- e_I e_J n+ne_C \gamma_{IJC}-ne_I\gamma_{CJC}
+\gamma_{IJC} e_C n-n\gamma_{CID}\gamma_{DJC}\\
\notag&-n\gamma_{IJD}\gamma_{CCD}
-\Lambda\delta_{IJ}n-(1+c_s^2)n\rho^{1-2r_s} v_I v_J-\frac{1}{2}\delta_{IJ}(1-c_s^2)n\rho-\partial_t\widetilde{k}_{IJ}\\
=&-\big[3H+\widehat{n}+\mathcal{O}(e^{-2Ht})\big]\widehat{k}_{IJ}
-\big[3H+\widehat{n}+\mathcal{O}(e^{-2Ht})\big]\widehat{n}\widehat{k}_{IJ}
+\delta_{IJ}\big[H+\mathcal{O}(e^{-2Ht}\big](\widehat{n}+\widehat{n}^2)\\
\notag&+\delta_{IJ}\big[3H^2+\mathcal{O}(e^{-2Ht})\big]\widehat{n}
+\mathrm{tr}\widetilde{k}\widetilde{k}_{IJ} 
- e_I e_J \widehat{n}+ne_C \widehat{\gamma}_{IJC}-ne_I\widehat{\gamma}_{CJC}
+\gamma_{IJC} e_C \widehat{n}\\
\notag&-n\widehat{\gamma}_{CID}\gamma_{DJC}-n\widehat{\gamma}_{IJD}\gamma_{CCD}
-n\widetilde{\gamma}_{CID}\widehat{\gamma}_{DJC}-n\widetilde{\gamma}_{IJD}\widehat{\gamma}_{CCD}
-\widetilde{\gamma}_{CID}\widetilde{\gamma}_{DJC}-\widetilde{\gamma}_{IJD}\widetilde{\gamma}_{CCD}\\
\notag&+\mathcal{O}(e^{-2Ht})\widehat{n}
-\Lambda\delta_{IJ}\widehat{n}-\Lambda\delta_{IJ}
-(1+c_s^2)n(\widehat{\rho^{1-2r_s}} v_I v_J+\widetilde{\rho}^{1-2r_s} \widehat{v}_I v_J+\widetilde{\rho}^{1-2r_s} \widetilde{v}_I \widehat{v}_J)\\
\notag&-(1+c_s^2)\widetilde{\rho}^{1-2r_s}\widetilde{v}_I\widetilde{v}_J
+\mathcal{O}(e^{-4Ht})\widehat{n}
-\frac{1}{2}\delta_{IJ}(1-c_s^2)\widehat{n}\rho-\frac{1}{2}\delta_{IJ}(1-c_s^2)\widehat{\rho}-\frac{1}{2}\delta_{IJ}(1-c_s^2)\widetilde{\rho}-\partial_t\widetilde{k}_{IJ}.
\end{align*}
Now \eqref{k.hat.eq} follows by noticing that the purely homogeneous terms cancel out, as well as $3\delta_{IJ}H^2\widehat{n}$ with $-\Lambda\delta_{IJ}\widehat{n}$, since $3H^2=\Lambda$.

We continue with the computations for \eqref{gamma.hat.eq}, using \eqref{gamma.eq}, \eqref{red.var.hom}, \eqref{diff.var}:
\begin{align*}
\partial_t\widehat{\gamma}_{IJB}=&\,ne_B k_{IJ}-ne_Jk_{BI}-nk_{IC}\gamma_{BJC}-nk_{CJ} \gamma_{BIC}+nk_{IC} \gamma_{JBC}
+nk_{BC} \gamma_{JIC}+nk_{IC} \gamma_{CJB}\\
&+(e_Bn)k_{JI}-(e_J n)k_{BI}-\partial_t\widetilde{\gamma}_{IJB}\\
=&\,ne_B \widehat{k}_{IJ}-ne_J \widehat{k}_{BI}
-n\widehat{k}_{IC}\gamma_{BJC}-n\widehat{k}_{CJ} \gamma_{BIC}
+n\widehat{k}_{IC} \gamma_{JBC}
+n\widehat{k}_{BC} \gamma_{JIC}+n\widehat{k}_{IC} \gamma_{CJB}\\
\notag&-nH\gamma_{IJB}+n\mathcal{O}(e^{-2Ht})\gamma_{BJI}
+n\mathcal{O}(e^{-2Ht})\gamma_{BIJ}
+n\mathcal{O}(e^{-2Ht})\gamma_{JBI}\\
\notag&+n\mathcal{O}(e^{-2Ht})\gamma_{JIB}
+n\mathcal{O}(e^{-2Ht})\gamma_{IJB}
+(e_B \widehat{n})k_{JI}-(e_J \widehat{n})k_{BI}\\
=&\,ne_B \widehat{k}_{IJ}-ne_J \widehat{k}_{BI}
-n\widehat{k}_{IC}\gamma_{BJC}-n\widehat{k}_{CJ} \gamma_{BIC}
+n\widehat{k}_{IC} \gamma_{JBC}
+n\widehat{k}_{BC} \gamma_{JIC}+n\widehat{k}_{IC} \gamma_{CJB}\\
\notag&-H\widehat{\gamma}_{IJB}-H\widetilde{\gamma}_{IJB}-\widehat{n}H\gamma_{IJB}+\widehat{n}\mathcal{O}(e^{-2Ht})\gamma_{BJI}
+\widehat{n}\mathcal{O}(e^{-2Ht})\gamma_{BIJ}
+\widehat{n}\mathcal{O}(e^{-2Ht})\gamma_{JBI}\\
\notag&+\widehat{n}\mathcal{O}(e^{-2Ht})\gamma_{JIB}
+\widehat{n}\mathcal{O}(e^{-2Ht})\gamma_{IJB}
+\mathcal{O}(e^{-2Ht})\widehat{\gamma}_{BJI}
+\mathcal{O}(e^{-2Ht})\widehat{\gamma}_{BIJ}
+\mathcal{O}(e^{-2Ht})\widehat{\gamma}_{JBI}\\
\notag&+\mathcal{O}(e^{-2Ht})\widehat{\gamma}_{JIB}
+\mathcal{O}(e^{-2Ht})\widehat{\gamma}_{IJB}
+\mathcal{O}(e^{-2Ht})\widetilde{\gamma}_{BJI}
+\mathcal{O}(e^{-2Ht})\widetilde{\gamma}_{BIJ}
+\mathcal{O}(e^{-2Ht})\widetilde{\gamma}_{JBI}\\
\notag&+\mathcal{O}(e^{-2Ht})\widetilde{\gamma}_{JIB}
+\mathcal{O}(e^{-2Ht})\widetilde{\gamma}_{IJB}
+(e_B \widehat{n})k_{JI}-(e_J \widehat{n})k_{BI}.
\end{align*}
Since $\widehat{\gamma}=\widehat{n}=\widehat{k}=0$ is a solution, the purely homogeneous terms cancel, leaving the asserted equation.
The computations for \eqref{eIi.hat.eq} are straightforward: 
\begin{align*}
\partial_t \widehat{e}^i_I\overset{\eqref{eIi.eq}}{=}nk_{IC}e^i_C-\partial_t\widetilde{e}^i_I
=&\,\widehat{n}k_{IC}e^i_C+\widehat{k}_{IC}e^i_C-\big[H+\mathcal{O}(e^{-2Ht})\big]e^i_I-\partial_t\widetilde{e}^i_I\\
=&\,\widehat{n}k_{IC}e^i_C+\widehat{k}_{IC}\widehat{e}^i_C-\big[H+\mathcal{O}(e^{-2Ht})\big]\widehat{e}^i_I,
\end{align*}
where in the last equality we used \eqref{eIi.eq} for the homogeneous variable $\widetilde{e}_I^i$.
For the equation \eqref{n.hat.eq}, we rewrite the RHS of \eqref{n.eq}:
\begin{align*}
&-\gamma_{CCD}e_Dn-nk_{CD}k_{CD}+n\Lambda
+(1+c_s^2)n\rho^{1-2r_s} v_Cv_C+\frac{3}{2}(1-c_s^2)n\rho+\partial_t\mathrm{tr}\widetilde{k}\\
=&-\gamma_{CCD}e_D\widehat{n}-n\widehat{k}_{CD}\widehat{k}_{CD}-2n\widetilde{k}_{CD}\widehat{k}_{CD}-\widetilde{k}_{CD}\widetilde{k}_{CD}-3H^2\widehat{n}+\mathcal{O}(e^{-2Ht})\widehat{n}+\Lambda+\Lambda\widehat{n}\\
&+(1+c_s^2)\big\{\widehat{n}\rho^{1-2r_s}v_Cv_C+\widehat{\rho^{1-2r_s}}v_Cv_C+\widetilde{\rho}^{1-2r_s}\widehat{v}_Cv_C
+\widetilde{\rho}^{1-2r_s}\widetilde{v}_1\widehat{v}_1\big\}+(1+c_s^2)\widetilde{\rho}^{1-2r_s}\widetilde{v}_1^2\\
&+\frac{3}{2}(1-c_s^2)\widehat{n}\rho+\frac{3}{2}(1-c_s^2)\widehat{\rho}+\frac{3}{2}(1-c_s^2)\widetilde{\rho}+\partial_t\mathrm{tr}\widetilde{k}\\
\tag{$H^2=\frac{\Lambda}{3}$}=&-\gamma_{CCD}e_D\widehat{n}-n\widehat{k}_{CD}\widehat{k}_{CD}-2Hn\widehat{n}+\mathcal{O}(e^{-2Ht})n\widehat{k}_{CC}+\mathcal{O}(e^{-2Ht})\widehat{n}\\
&+(1+c_s^2)\big\{\widehat{n}\rho^{1-2r_s}v_Cv_C+\widehat{\rho^{1-2r_s}}v_Cv_C+\widetilde{\rho}^{1-2r_s}\widehat{v}_Cv_C
+\widetilde{\rho}^{1-2r_s}\widetilde{v}_1\widehat{v}_1\big\}\\
&+\frac{3}{2}(1-c_s^2)\widehat{n}\rho+\frac{3}{2}(1-c_s^2)\widehat{\rho},
\end{align*}
where in the last equality we used the fact that the purely homogeneous terms cancel out (since the background is a solution), and we plugged in \eqref{parab.gauge} in the form $\widehat{k}_{CC}=-\widehat{n}$. Hence, we obtain the desired equation. 

The constraint equation \eqref{mom.hat.const} follows directly from \eqref{momconst} by plugging in \eqref{diff.var} and using the anti-symmetry of $\gamma_{CCD}$:
\begin{align*}
e_C\widehat{k}_{CI}=&\,e_Ck_{CI}=-e_I\widehat{n}+k_{ID}\gamma_{CCD}+k_{CD}\gamma_{CID}-(1+c_s^2)\rho^{1-2r_s} v_0v_I\\
=&-e_I\widehat{n}-\widehat{k}_{ID}\gamma_{CDC}+\widehat{k}_{CD}\gamma_{CID}-(\widetilde{k}_{ID}+\delta_{ID}H)\gamma_{CDC}+(\widetilde{k}_{CD}+\delta_{CD}H)\gamma_{CID}\\
&-(1+c_s^2)\big\{\widehat{\rho^{1-2r_s}}v_0v_I+\widetilde{\rho}^{1-2r_s}\widehat{v}_0v_I+\widetilde{\rho}^{1-2r_s}\widetilde{v}_0\widehat{v}_I\big\}-(1+c_s^2)\widetilde{\rho}^{1-2r_s}\widetilde{v}_0\widetilde{v}_I\\
=&-e_I\widehat{n}+\widehat{k}_{ID}\gamma_{CCD}+\widehat{k}_{CD}\gamma_{CID}
-(\widetilde{k}_{ID}+\delta_{ID}H)\widehat{\gamma}_{CDC}+(\widetilde{k}_{CD}+\delta_{CD}H)\widehat{\gamma}_{CID}\\
&-(1+c_s^2)\big\{\widehat{\rho^{1-2r_s}}v_0v_I+\widetilde{\rho}^{1-2r_s}\widehat{v}_0v_I+\widetilde{\rho}^{1-2r_s}\widetilde{v}_0\widehat{v}_I\big\}.
\end{align*}

Next, we employ \eqref{vI.eq} to compute:
\begin{align*}
\partial_t\widehat{v}_I=&\,\frac{n}{v_0}\big\{v_C e_C \widehat{v}_I+k_{CI}v_0v_C+\frac{1}{2}e_I(\widehat{\rho^{2r_s}})+n^{-1}(e_I\widehat{n})v_0^2+\gamma_{CDI}v_Cv_D\big\}-\partial_t\widetilde{v}_I\\
=&\,n\frac{v_C}{v_0}e_C \widehat{v}_I+\widehat{n}k_{CI}v_C+\widehat{k}_{CI}v_C
+\widetilde{k}_{CI}\widehat{v}_C+\widetilde{k}_{1I}\widetilde{v}_1\\
&+\frac{1}{2}\frac{n}{v_0}e_I(\widehat{\rho^{2r_s}})+(e_I\widehat{n})v_0+\frac{n}{v_0}\gamma_{CDI}\widehat{v}_Cv_D
+\frac{n}{v_0}\gamma_{1DI}\widetilde{v}_1\widehat{v}_D
+\frac{n}{v_0}\widehat{\gamma}_{11I}\widetilde{v}_1^2
-\partial_t\widetilde{v}_I,
\end{align*}
where we used the vanishing of $\widetilde{\gamma}_{11I}=0$, see \eqref{gamma.hom2}. 
We arrive at \eqref{vI.hat.eq} after dropping the homogeneous terms.

Finally, for the equation \eqref{rho.hat.eq}, we plug in \eqref{rho.eq2} and \eqref{trk}:
\begin{align*}
&\big(\frac{1-2r_s}{2r_s}+\frac{1}{2}\frac{\rho^{2r_s}}{v_0^2}\big)\partial_t(\widehat{\rho^{2r_s}})\\
=&-n\big(\frac{v_Cv_I}{v_0^2}k_{CI}-\mathrm{tr}k\big)\rho^{2r_s}
-n\frac{\rho^{2r_s}}{v_0^2}\frac{v_Iv_D}{v_0}e_D\widehat{v}_I
+n\frac{\rho^{2r_s}}{v_0}e_C\widehat{v}_C+n\big(\frac{1-2r_s}{2r_s}-\frac{1}{2}\frac{\rho^{2r_s}}{v_0^2}\big)\frac{v_C}{v_0}e_C(\widehat{\rho^{2r_s}})\\
&-n\frac{\rho^{2r_s}}{v_0^2}\frac{v_I}{v_0}
\big\{n^{-1}(e_In)v_0^2+\gamma_{CDI}v_Cv_D\big\}
+n\frac{\rho^{2r_s}}{v_0}\big\{n^{-1}(e_Cn)v_C+\gamma_{CDC}v_D\big\}
+\big(\frac{1-2r_s}{2r_s}+\frac{1}{2}\frac{\rho^{2r_s}}{v_0^2}\big)\partial_t(\widetilde{\rho}^{2r_s})\\
=&-\widehat{n}\big(\frac{v_Cv_I}{v_0^2}k_{CI}-\mathrm{tr}k\big)\rho^{2r_s}
-\frac{v_Cv_I}{v_0^2}\widehat{k}_{CI}\rho^{2r_s}+\big\{\frac{v_0^2-\rho^{2r_s}}{v_0^2}H-3H+\mathcal{O}(e^{-2Ht})\big\}\widehat{\rho^{2r_s}}\\
&-(\widetilde{k}_{CI}+H\delta_{CI})\big(\frac{v_Cv_I}{v_0^2}\widehat{\rho^{2r_s}}+\frac{\widehat{v}_Cv_I}{v_0^2}\widetilde{\rho}^{2r_s}+\frac{\widetilde{v}_C\widehat{v}_I}{v_0^2}\widetilde{\rho}^{2r_s}+\frac{\widetilde{v}_C\widetilde{v}_I}{v_0^2\widetilde{v}_0^2}(v_0+\widetilde{v}_0)\widehat{v}_0\widetilde{\rho}^{2r_s}+\frac{\widetilde{v}_1^2}{\widetilde{v}_0^2}\widetilde{\rho}^{2r_s}\big)\\
&-\big\{\frac{\widehat{\rho^{2r_s}}}{v_0^2}H+\frac{\widetilde{\rho}^{2r_s}}{v_0^2}H
-\frac{\widetilde{\rho}^{2r_s}}{v_0^2\widetilde{v}_0^2}(v_0+\widetilde{v}_0)
\widehat{v}_0H+\frac{\widetilde{\rho}^{2r_s}}{\widetilde{v}_0^2}H+2H+\mathcal{O}(e^{-2Ht})\big\}\widetilde{\rho}^{2r_s}
-\widehat{n}\rho^{2r_s}\\
&-n\frac{\rho^{2r_s}}{v_0^2}\frac{v_Iv_D}{v_0}e_D\widehat{v}_I
+n\frac{\rho^{2r_s}}{v_0}e_C\widehat{v}_C+n\big(\frac{1-2r_s}{2r_s}-\frac{1}{2}\frac{\rho^{2r_s}}{v_0^2}\big)\frac{v_C}{v_0}e_C(\widehat{\rho^{2r_s}})\\
&-\frac{\rho^{2r_s}}{v_0^2}\frac{v_I}{v_0}
\big\{(e_I\widehat{n})v_0^2++n\gamma_{CDI}\widehat{v}_Cv_D
+n\gamma_{1DI}\widetilde{v}_1\widehat{v}_D+n\widehat{\gamma}_{11I}\widetilde{v}_1^2\big\}\\
&+\frac{\rho^{2r_s}}{v_0}\big\{(e_C\widehat{n})v_C+\widehat{\gamma}_{CDC}v_D\big\}
+\big\{\frac{1-2r_s}{2r_s}+\frac{1}{2}\frac{\widehat{\rho^{2r_s}}}{v_0^2}
-\frac{1}{2}\frac{\widetilde{\rho}^{2r_s}}{v_0^2\widetilde{v}_0^2}(v_0+\widetilde{v}_0)\widehat{v}_0+\frac{1}{2}\frac{\widetilde{\rho}^{2r_s}}{\widetilde{v}_0^2}\big\}\partial_t(\widetilde{\rho}^{2r_s}),
\end{align*}
where we made use of the identities $\widetilde{\gamma}_{CDC}=\widetilde{\gamma}_{11I}=0$, see \eqref{gamma.hom2}.
The desired equation is obtained by dropping all purely homogeneous terms.
\end{proof}
\section{Norms, total energy, and bootstrap assumptions}\label{sec:Boots}
\subsection{Norms of the unknown variables expressed relative to the orthonormal frame}\label{SS:SOBOLEVNORMS}

We define the norm $\|f\|_{L^2(\Sigma_t)}$ for a scalar function by
\begin{align}\label{L2}
\|f\|_{L^2(\Sigma_t)}^2:=\int_{\Sigma_t}f^2(t,\omega)d\mathbb{S}^3,
\end{align}
where $d\mathbb{S}^3$ is the canonical volume form on $\mathbb{S}^3$. 
Define also the corresponding $H^M(\Sigma_t),W^{M,\infty}(\Sigma_t)$ norms:
\begin{align}\label{HM}
\|f\|_{H^M(\Sigma_t)}^2=\sum_{|\iota|\leq M}\|Y^\iota f\|_{L^2(\Sigma_t)},\qquad\|f\|_{W^{M,\infty}(\Sigma_t)}=\sum_{|\iota|\leq M}\|Y^\iota f\|_{L^\infty(\Sigma_t)},
\end{align}
where is $\iota$ is a spatial multi-index and $Y^\iota$ is the operator that acts on $v$ with repeated differentiation with respect to the vector fields $Y_1,Y_2,Y_3$. 

For the indexed variables that we study, their corresponding $H^M(\Sigma_t),W^{M,\infty}(\Sigma_t)$ norms are simply the sum of their components:
\begin{align}\label{HM.hat}
\begin{split}
\|\widehat{k}\|_{H^M(\Sigma_t)}^2=\sum_{I,J=1}^3\|\widehat{k}_{IJ}\|_{H^M(\Sigma_t)}^2,&\qquad\|\widehat{k}\|_{W^{M,\infty}(\Sigma_t)}^2=\sum_{I,J=1}^3\|\widehat{k}_{IJ}\|_{W^{M,\infty}(\Sigma_t)}^2,\\
\|\widehat{\gamma}\|_{H^M(\Sigma_t)}^2=\sum_{I,J,B=1}^3\|\widehat{\gamma}_{IJB}\|_{H^M(\Sigma_t)}^2,&\qquad\|\widehat{\gamma}\|_{W^{M,\infty}(\Sigma_t)}^2=\sum_{I,J,B=1}^3\|\widehat{\gamma}_{IJB}\|_{W^{M,\infty}(\Sigma_t)}^2,\\
\|\widehat{e}\|_{H^M(\Sigma_t)}^2=\sum_{i,I=1}^3\|\widehat{e}^i_I\|_{H^M(\Sigma_t)}^2,&\qquad\|\widehat{e}\|_{W^{M,\infty}(\Sigma_t)}^2=\sum_{i,I=1}^3\|\widehat{e}^i_I\|_{W^{M,\infty}(\Sigma_t)}^2,\\
\|\widehat{v}\|_{H^M(\Sigma_t)}^2=\sum_{\mu=0}^3\|\widehat{v}_\mu\|_{H^M(\Sigma_t)}^2,&\qquad\|\widehat{v}\|_{W^{M,\infty}(\Sigma_t)}^2=\sum_{\mu=0}^3\|\widehat{v}_\mu\|_{W^{M,\infty}(\Sigma_t)}^2.
\end{split}
\end{align}
Define the energy of the geometric variables:
\begin{align}\label{geom.en}
\mathcal{E}_{geom}(t)=&\,e^{2Ht}\big\{\|\widehat{\gamma}\|^2_{H^N(\Sigma_t)}+\|\widehat{e}\|^2_{H^N(\Sigma_t)}+\|\widehat{k}\|^2_{H^N(\Sigma_t)}\}
+e^{3Ht}\|\widehat{n}\|^2_{H^N(\Sigma_t)}
\end{align}
and the following energies of the fluid variables:
\begin{align}\label{fluid.en}
\notag\mathcal{E}_{fluid.low}(t)=&\,
e^{2Ht}\|\widehat{v}\|^2_{H^{N-1}(\Sigma_t)}+e^{\frac{8r_s}{1-2r_s}Ht}\|\widehat{\rho^{2r_s}}\|^2_{H^{N-2}(\Sigma_t)},\\
\mathcal{E}_{fluid.top}(t)=&\,e^{-4A_sHt}\big\{e^{2Ht}\|\widehat{v}\|^2_{\dot{H}^N(\Sigma_t)}+e^{\frac{8r_s}{1-2r_s}Ht}\big(\|\widehat{\rho^{2r_s}}\|^2_{\dot{H}^N(\Sigma_t)}+\|\widehat{\rho^{2r_s}}\|^2_{\dot{H}^{N-1}(\Sigma_t)}\big)\big\}\\
\notag=&\,e^{-4A_sHt}e^{2Ht}\|\widehat{v}\|^2_{\dot{H}^N(\Sigma_t)}+e^{4Ht}\big(\|\widehat{\rho^{2r_s}}\|^2_{\dot{H}^N(\Sigma_t)}+\|\widehat{\rho^{2r_s}}\|^2_{\dot{H}^{N-1}(\Sigma_t)}\big),
\end{align}
where we note the identity $\frac{8r_s}{1-2r_s}-4A_s=4$ and recall the definitions
$$A_s=\frac{3c_s^2-1}{1-c_s^2}\in(0,+\infty),\qquad r_s=\frac{c_s^2}{1+c_s^2}\in(\frac{1}{4},\frac{1}{2}),\qquad\text{for $c_s^2\in(\frac{1}{3},1)$}.$$
On the other hand, observe that 
$$ A_s\in(0,\frac{1}{2})\qquad
\text{and} \qquad r_s\in(\frac{1}{4},\frac{3}{10}) \qquad \text{for $c_s^2\in(\frac{1}{3},\frac{3}{7})$}.$$
Define also the total energy:
\begin{align}\label{tot.en}
\mathcal{E}_{tot}(t)=\mathcal{E}_{geom}(t)+\mathcal{E}_{fluid.low}(t)
+\mathcal{E}_{fluid.top}(t).
\end{align}
\begin{remark}
There is a $c_s$-dependent discrepancy in the weights (powers of $e^{Ht}$) used in the top order energy in \eqref{fluid.en} compared with the lower order energy in \eqref{fluid.en} for the fluid variables. The larger $c_s$ is, the larger the discrepancy. In fact, it becomes infinite as the sound speed $c_s$ tends to $1$.\footnote{Our method of proof can only tolerate the range of sound speeds $c_s^2\in(\frac{1}{3},\frac{3}{7})$. In Appendix \ref{app:Euler} all sound speeds in the range $c_s^2\in(\frac{1}{3},1)$ are allowed, albeit for the relativistic Euler equations in a fixed FLRW background.} 
This comes from a degeneracy in the top order energy estimates that we derive in Section \ref{S:top.est}, see Proposition \ref{prop:fluid.est}. We note that the weights in the $H^{N-1}(\Sigma_t),H^{N-2}(\Sigma_t)$ norms of $\widehat{v},\widehat{\rho^{2r_s}}$ respectively are optimal, that is to say the corresponding renormalized variables $e^{Ht}\widehat{v},e^{\frac{4r_s}{1-2r_s}Ht}\widehat{\rho^{2r_s}}$ have a limit as $t\rightarrow+\infty$. This will be proven in Section \ref{sec:refined.est}, once we have completed our overall continuity argument.

Normally, there should be a comparable (but smaller) discrepancy in the corresponding norms of the geometric variables. However, for the range of sound speeds $c_s^2\in(\frac{1}{3},\frac{3}{7})$ that we are working with, the discrepancy in the weights of the top order energy of the fluid variables is small enough such that it does not propagate to the top order energy of the geometric variables.
\end{remark}
\subsection{Bootstrap assumptions}\label{subsec:Boots}

Our bootstrap assumptions are that there exists a bootstrap time $T_{Boot}\in(T,+\infty)$ such that it holds
\begin{align}\label{Boots}
\mathcal{E}_{tot}(t)\leq \varepsilon , \qquad\forall t\in[T,T_{Boot}),
\end{align}
for a sufficiently small constant $\varepsilon>0$ to be determined below. Such a $T_{Boot}>0$ obviously exists by Cauchy stability and the closeness of the perturbed initial data on $\Sigma_T$ to the homogeneous background initial data.

\section{Future stability estimates}\label{sec:FSE}

In this section, we derive the exponential decay of the variables $\widehat{k}_{IJ},\widehat{\gamma}_{IJB},\widehat{e}^i_I,\widehat{n},\widehat{v}_0,\widehat{v}_I,\widehat{\rho^{2r_s}}$, in a manner that yields a strict 
improvement of our bootstrap assumptions \eqref{Boots}, which in turn by a standard continuation argument implies that $T_{Boot}=+\infty$, ie. the perturbed solution exists for all future time and it satisfies the estimate \eqref{Boots} for all $t\in[T,+\infty)$,
see Proposition \ref{prop:cont.arg} for the final statement.

\subsection{Basic estimates and identities}

We will frequently use the standard Sobolev inequality in $(\mathbb{S}^3,d\omega)$:
\begin{align}\label{Sob}
\|f\|_{L^{\infty}(\Sigma_t)}\leq C\|f\|_{H^2(\Sigma_t)},
\end{align}
where $C>0$ depends on $(\mathbb{S}^3,d\omega)$, $d\omega$ being the standard metric on $\mathbb{S}^3$.
Already, \eqref{Boots} and \eqref{Sob} imply the bounds 
\begin{align}\label{WN-3.est}
\begin{split}
\|\widehat{k}\|_{W^{N-2,\infty}(\Sigma_t)}
+\|\widehat{\gamma}\|_{W^{N-2,\infty}(\Sigma_t)}+\|\widehat{e}\|_{W^{N-2,\infty}(\Sigma_t)}\leq&\,C\varepsilon e^{-Ht},\\
\|\widehat{n}\|_{W^{N-2,\infty}(\Sigma_t)}\leq&\,C\varepsilon e^{-\frac{3}{2}Ht},\\
\|\widehat{v}\|_{W^{N-3,\infty}(\Sigma_t)}\leq&\,C\varepsilon e^{-Ht},\\
\|\widehat{\rho^{2r_s}}\|_{W^{N-4,\infty}(\Sigma_t)}\leq&\,C\varepsilon e^{-\frac{4r_s}{1-2r_s}Ht},
\end{split}
\end{align}
for all $t\in[T,T_{Boot})$, where we note that $\frac{4r_s}{1-2r_s}>2$ for $c_s^2>\frac{1}{3}$. Other power differences of $\rho$ satisfy the bounds: 
\begin{align}\label{rho.hat.boots}
\begin{split}
e^{\frac{2}{1-2r_s}Ht}\big(\|\widehat{\rho}\|_{W^{N-4}(\Sigma_t)}
+\|\widehat{\rho}\|_{H^{N-2}(\Sigma_t)}\big)
+e^{\frac{1}{r_s}Ht}\big(\|\widehat{\rho}\|_{\dot{H}^N(\Sigma_t)}+\|\widehat{\rho}\|_{\dot{H}^{N-1}(\Sigma_t)}\big)\leq&\,C\varepsilon,\\
e^{2Ht}\big(\|\widehat{\rho^{1-2r_s}}\|_{W^{N-4}(\Sigma_t)}+\|\widehat{\rho^{1-2r_s}}\|_{H^{N-2}(\Sigma_t)}\big)
+e^{\frac{1-2r_s}{r_s}Ht}\big(\|\widehat{\rho^{1-2r_s}}\|_{\dot{H}^N(\Sigma_t)}
+\|\widehat{\rho^{1-2r_s}}\|_{\dot{H}^{N-1}(\Sigma_t)}\big)\leq&\, C\varepsilon.
\end{split}
\end{align}

When we commute the equations \eqref{k.hat.eq}-\eqref{mom.hat.const} with $Y^\iota$, we will use the commutator relation:
\begin{align}\label{comm.eI}
[Y^\iota,e_I]f=\sum_{\iota_1\cup\iota_2=\iota,\,|\iota_2|<|\iota|}(Y^{\iota_1}e^a_I)Y^{\iota_2}Y_af.
\end{align}
Also, since $Y_a$ is $d\omega$-Killing, we have the following integration by parts formula relative to $e_I$:
\begin{align}\label{IBP}
\int_{\Sigma_t}f_1 (e_If_2) d\omega=-\int_{\Sigma_t}[(e_If_1)f_2+(Y_ie^i_I)f_1f_2]d\omega.
\end{align}

In the derivations of the energy estimates below, it will be useful to control the following variables in $L^\infty(\Sigma_t)$.
\begin{lemma}\label{lem:Linfty.est}
Assume that the bootstrap assumptions \eqref{Boots} are valid for $\varepsilon$ sufficiently small and $T$ sufficiently large. Then there exist (non-zero) constants $\widetilde{v}^\infty_0,\widetilde{\rho}^\infty$, depending on the asymptotic data of the background fluid variables, 
such that the following pointwise bounds hold:
\begin{align}\label{Linfty.est}
\begin{split}
\|v_0^{-1}-(\widetilde{v}_0^\infty)^{-1}e^{Ht}\|_{L^\infty(\Sigma_t)}\leq&\, C(\varepsilon+e^{-Ht})e^{Ht},\\
\|\frac{\rho^{2r_s}}{v_0^2}-(\widetilde{\rho}^\infty)^{2r_s}(\widetilde{v}_0^\infty)^{-2}e^{2Ht-\frac{4r_s}{1-2r_s}Ht}\|_{L^\infty(\Sigma_t)}\leq &\,C(\varepsilon+e^{-2Ht})e^{2Ht-\frac{4r_s}{1-2r_s}Ht},\\
\|\partial_t\log v_0 + H\|_{L^\infty(\Sigma_t)}\leq&\,Ce^{-Ht},\\
\|\partial_t\log \rho + \frac{2}{1-2r_s}H\|_{L^\infty(\Sigma_t)}\leq&\,Ce^{2Ht-\frac{4r_s}{1-2r_s}Ht},
\end{split}
\end{align}
for all $t\in[T,T_{Boot})$, where we recall that $\frac{4r_s}{1-2r_s}>2$.
\end{lemma}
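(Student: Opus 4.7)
The plan is to split the four bounds into two groups. The first two estimates, on $v_0^{-1}$ and $\rho^{2r_s}/v_0^2$, follow from algebraic manipulation of the bootstrap assumptions \eqref{Boots} (via their pointwise consequences \eqref{WN-3.est}) combined with the background asymptotic expansions in \eqref{red.var.hom}. The last two estimates, on $\partial_t\log v_0$ and $\partial_t\log \rho$, are obtained by inverting the fluid evolution equations \eqref{v0.eq}, \eqref{rho.eq} to isolate these time derivatives and substituting the bootstrap bounds into the right-hand sides.

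For the bound on $v_0^{-1}$, I would decompose $v_0=\widetilde{v}_0+\widehat{v}_0$ and use $\widetilde{v}_0=\widetilde{v}_0^{\infty,1}e^{-Ht}(1+O(e^{-2Ht}))$ from Lemma \ref{lem:red.var.hom} together with $\|\widehat{v}_0\|_{L^\infty(\Sigma_t)}\leq C\varepsilon e^{-Ht}$ from \eqref{WN-3.est}. This gives $v_0=\widetilde{v}_0^{\infty,1}e^{-Ht}(1+\psi)$ with $|\psi|\leq C(\varepsilon+e^{-2Ht})$; choosing $\varepsilon$ small and $T$ large makes $|\psi|\leq 1/2$, and a geometric-series expansion of $v_0^{-1}$ in $\psi$ delivers the first bound with $\widetilde{v}_0^\infty:=\widetilde{v}_0^{\infty,1}$. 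The bound on $\rho^{2r_s}/v_0^2$ proceeds analogously: extract $\widetilde{\rho}^{2r_s}=(\widetilde{\rho}^\infty)^{2r_s}e^{-\frac{4r_s}{1-2r_s}Ht}(1+O(e^{-2Ht}))$ from the expansion of $\widetilde{\rho}^{1-2r_s}$ in \eqref{red.var.hom}, combine with $\|\widehat{\rho^{2r_s}}\|_{L^\infty(\Sigma_t)}\leq C\varepsilon e^{-\frac{4r_s}{1-2r_s}Ht}$ from \eqref{WN-3.est}, and multiply by the expansion of $v_0^{-2}$ from the first bound.

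For the third bound I would solve \eqref{v0.eq} for $e_0v_0$ and rewrite
\begin{equation*}
\partial_t\log v_0=\frac{n}{v_0^{2}}\Bigl[v_Ce_Cv_0+k_{CD}v_Cv_D+\tfrac12 e_0(\rho^{2r_s})+n^{-1}(e_Cn)v_0v_C\Bigr].
\end{equation*}
Plugging in $k_{CD}=-H\delta_{CD}+O(\varepsilon e^{-Ht}+e^{-2Ht})$ (from \eqref{red.var.hom} and \eqref{WN-3.est}) and using the algebraic identity $v_Cv_C=v_0^2-\rho^{2r_s}$, the $k_{CD}v_Cv_D$ piece produces $-H$ at leading order, while the remaining four terms are each pointwise bounded by the stated decay rate after invoking \eqref{WN-3.est}, \eqref{rho.hat.boots}, and the first two conclusions of the lemma. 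For $\partial_t\log\rho$ I would carry out the parallel computation starting from \eqref{rho.eq}: solving for $v_0e_0\log\rho$, dividing by $v_0$, multiplying by $n$, and using $\mathrm{tr}k=-3H-\widehat n+O(e^{-2Ht})$ from \eqref{trk}, one arrives at the relation
\begin{equation*}
(1-2r_s)\partial_t\log\rho+\partial_t\log v_0=n\,\mathrm{tr}k+(\text{lower order}),
\end{equation*}
and substituting the just-proven bound $\partial_t\log v_0=-H+O(e^{-Ht})$ together with $n\,\mathrm{tr}k=-3H+O(\varepsilon e^{-3Ht/2}+e^{-2Ht})$ isolates the leading rate $-\tfrac{2}{1-2r_s}H$ for $\partial_t\log\rho$, with the residual error governed by the $\rho^{2r_s}/v_0^2$ scaling provided by the second bound.

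The main obstacle is to guarantee that the denominator $v_0$, and hence $v_0^2$, stays bounded away from zero in the rescaled sense $e^{Ht}v_0\gtrsim 1$, since every subsequent step divides by these quantities. This quantitative lower bound is precisely what the first estimate of the lemma provides and hinges on the non-vanishing of the background constant $\widetilde{v}_0^{\infty,1}$ from Lemma \ref{lem:red.var.hom}; one simply needs $\varepsilon$ small compared with $|\widetilde{v}_0^{\infty,1}|$ and $T$ large enough that the $O(e^{-2Ht})$ background correction is bounded by $|\widetilde{v}_0^{\infty,1}|/4$. Once this baseline is secured, the remaining work is systematic bookkeeping of exponential weights, exploiting $\frac{4r_s}{1-2r_s}>2$ to keep the $\rho^{2r_s}$ contributions genuinely subleading compared with $v_Cv_C$.
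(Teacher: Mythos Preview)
Your treatment of the first two estimates is essentially the paper's: split $v_0=\widetilde{v}_0+\widehat{v}_0$ and $\rho^{2r_s}=\widetilde{\rho}^{2r_s}+\widehat{\rho^{2r_s}}$, invoke the background expansions \eqref{red.var.hom} and the pointwise bootstrap consequences \eqref{WN-3.est}, and expand the reciprocal.

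For the last two estimates there is a logical ordering gap. Your formula for $\partial_t\log v_0$ from \eqref{v0.eq} contains the term $\tfrac{n}{2v_0^{2}}\,e_0(\rho^{2r_s})=\tfrac{r_s\rho^{2r_s}}{v_0^{2}}\,\partial_t\log\rho$, a \emph{time} derivative. This is not controlled by \eqref{WN-3.est}, \eqref{rho.hat.boots}, or the first two conclusions of the lemma, which bound only the variables and their spatial derivatives. So the third estimate, as you outline it, secretly requires the fourth; but your derivation of the fourth (from \eqref{rho.eq}) in turn substitutes the ``just-proven'' third bound for $\partial_t\log v_0$, and the argument is circular.

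The paper breaks the circularity by reversing the order: it first derives the fourth estimate directly from the pre-processed equation \eqref{rho.eq2}, in which $e_0v_0$ has already been algebraically eliminated (via $e_0v_0=\tfrac{v_I}{v_0}e_0v_I+\tfrac12 e_0(\rho^{2r_s})$ and the $v_I$-evolution equation), so that no knowledge of $\partial_t\log v_0$ is needed; only then does it substitute the fourth estimate into the expression for $\partial_t\log v_0$. Your route can be repaired equivalently by treating the pair $(\partial_t\log v_0,\partial_t\log\rho)$ as a coupled $2\times2$ linear system and solving it --- the coupling coefficient $\rho^{2r_s}/v_0^{2}\sim e^{-2A_sHt}$ is small, so the system is trivially invertible --- but you must make this explicit rather than assert that the $e_0(\rho^{2r_s})$ term is bounded by the bootstrap norms alone.
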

\begin{proof}
By Lemmas \ref{lem:hom.est}, \ref{lem:app.hom}, there exist (non-zero) constants $\widetilde{v}_0^\infty,\widetilde{\rho}^\infty$ such that 
\begin{align}\label{v0.rho.tilde.exp}
\begin{split}
\|\widetilde{v}_0-\widetilde{v}_0^{\infty}e^{-Ht}\|_{L^\infty(\Sigma_t)}\leq Ce^{-3Ht},\qquad\|\widetilde{\rho}^{2r_s}-(\widetilde{\rho}^\infty)^{2r_s}e^{-\frac{4r_s}{1-2r_s}Ht}\|\leq Ce^{-\frac{8r_s}{1-2r_s}Ht}.
\end{split}
\end{align}
Then, the first two estimates in \eqref{Linfty.est} follow from the bootstrap assumptions \eqref{Boots} and the triangle inequality. For the third estimate in \eqref{Linfty.est}, we use the identities \eqref{e0v0}, \eqref{e0vI} to write 
\begin{align}
\label{dt.v0}\partial_t\log v_0=&\,\frac{n}{v_0}\frac{v_I}{v_0}\frac{1}{v_0}\big\{k_{CI}v_0v_C+\frac{1}{2}e_I(\rho^{2r_s})+n^{-1}(e_In)v_0^2+\gamma_{CDI}v_Cv_D\big\}\\
\notag&+r_snv_0\frac{\rho^{2r_s}}{v_0^2}\partial_t\log\rho.
\end{align}
Rewriting as well
\begin{align*}
\frac{n}{v_0}\frac{v_I}{v_0}\frac{1}{v_0}k_{CI}v_0v_C=&\,
\widehat{n}\frac{v_I}{v_0}\frac{1}{v_0}k_{CI}v_C+
\frac{v_I}{v_0}\frac{1}{v_0}\widehat{k}_{CI}v_C
+\frac{v_I}{v_0}\frac{1}{v_0}(\widetilde{k}_{CI}+\delta_{CI}H)v_C
-\frac{v_Iv_I}{v_0^2}H\\
=&\,\widehat{n}\frac{v_I}{v_0}\frac{1}{v_0}k_{CI}v_C+
\frac{v_I}{v_0}\frac{1}{v_0}\widehat{k}_{CI}v_C
+\frac{v_I}{v_0}\frac{1}{v_0}(\widetilde{k}_{CI}+\delta_{CI}H)v_C
+\frac{\rho^{2r_s}}{v_0^2}H-H,
\end{align*}
and plugging it in \eqref{dt.v0}, we solve for $\partial_t\log v_0+H$. The desired estimate follows by bounding the resulting RHS in $L^\infty(\Sigma_t)$ using the bootstrap assumptions \eqref{Boots} and the fourth estimate in \eqref{Linfty.est}. 

Thus, it remains to derive the fourth estimate in \eqref{Linfty.est}. For this purpose, we rewrite the equation \eqref{rho.eq2}:
\begin{align}\label{dt.rho}
\notag&\big(1-2r_s+r_s\frac{\rho^{2r_s}}{v_0^2}\big)\partial_t\log\rho
+n\big(\frac{v_Cv_I}{v_0^2}k_{CI}-\mathrm{tr}k\big)\\
=&-\frac{n}{v_0^2}\frac{v_Iv_D}{v_0}e_Dv_I
+\frac{n}{v_0}e_Cv_C+n\big(1-2r_s-r_s\frac{\rho^{2r_s}}{v_0^2}\big)\frac{v_C}{v_0}e_C\log \rho\\
\notag&-\frac{n}{v_0^2}\frac{v_I}{v_0}
\big\{n^{-1}(e_In)v_0^2+\gamma_{CDI}v_Cv_D\big\}
+\frac{n}{v_0}\big\{n^{-1}(e_Cn)v_C+\gamma_{CDC}v_D\big\}.
\end{align}
Rewrite also 
\begin{align*}
n\big(\frac{v_Cv_I}{v_0^2}k_{CI}-\mathrm{tr}k\big)=&\,\widehat{n}\big(\frac{v_Cv_I}{v_0^2}k_{CI}-\mathrm{tr}k\big)+\frac{v_Cv_I}{v_0^2}\widehat{k}_{CI}-\widehat{k}_{CC}\\
&+\frac{v_Cv_I}{v_0^2}(\widetilde{k}_{CI}+\delta_{CI}H)
+\frac{\rho^{2r_s}}{v_0^2}H-(\widetilde{k}_{CC}+3H)+2H
\end{align*}
and plug it into the last expression on the LHS of \eqref{dt.rho}. Then isolate $\big(1-2r_s+r_s\frac{\rho^{2r_s}}{v_0^2}\big)\partial_t( \rho^{2r_s})+2H$ and estimate the RHS using the bootstrap assumptions \eqref{Boots} and the already derived first two estimates in \eqref{Linfty.est} to infer that
\begin{align}\label{dt.rho.est}
\big\|\big(1-2r_s+r_s\frac{\rho^{2r_s}}{v_0^2}\big)\partial_t\log\rho+2H\big\|_{L^\infty(\Sigma_t)}\leq Ce^{-Ht}.
\end{align}
Dividing with the coefficient of $\partial_t\log \rho$ and applying the second estimate in \eqref{Linfty.est} once more gives the desired estimate, since $0>2-\frac{4r_s}{1-2r_s}>-1$, for $c_s^2\in(\frac{1}{3},\frac{3}{7})$, $r_s\in(\frac{1}{4},\frac{3}{10})$.
\end{proof}
\subsection{Energy estimates for the geometric variables}

In this subsection, we derive $H^N(\Sigma_t)$ energy estimates for the geometric variables. First, we commute the equations \eqref{k.hat.eq}-\eqref{n.hat.eq}, \eqref{mom.hat.const} with $\partial^\iota$, $|\iota|\leq N$, using \eqref{comm.eI}: 
\begin{align}
\label{k.hat.diff.eq}\partial_tY^\iota\widehat{k}_{IJ}+3HY^\iota\widehat{k}_{IJ}=&-e_IY^\iota e_J \widehat{n}+ne_CY^\iota \widehat{\gamma}_{IJC}-ne_IY^\iota\widehat{\gamma}_{CJC}+\mathfrak{K}^\iota_{IJ},\\
\label{gamma.hat.diff.eq}\partial_tY^\iota\widehat{\gamma}_{IJB}+HY^\iota\widehat{\gamma}_{IJB}=&\,ne_B Y^\iota\widehat{k}_{IJ}-ne_JY^\iota \widehat{k}_{BI}+\mathfrak{G}_{IJB}^\iota,\\
\label{eIi.hat.diff.eq}\partial_tY^\iota\widehat{e}_I^i+HY^\iota\widehat{e}_I^i=&\,\mathfrak{E}_{Ii}^\iota,\\
\label{n.hat.diff.eq}\partial_tY^\iota\widehat{n}+2HY^\iota\widehat{n}=&\,e_CY^\iota e_C \widehat{n}+\mathfrak{N}^\iota,\\
\label{mom.hat.diff.const}
e_C Y^\iota\widehat{k}_{CI}
=&\,\mathfrak{M}_I^\iota,
\end{align}
where
\begin{align}
\label{mathfrak.K}\mathfrak{K}^\iota_{IJ}=&\,
\sum_{\iota_1\cup\iota_2=\iota,\,|\iota_2|<|\iota|}\big\{Y^{\iota_1}(ne_C^a)Y_aY^{\iota_2} \widehat{\gamma}_{IJC}
-(Y^{\iota_1}e_I^a)Y_aY^{\iota_2} e_J \widehat{n}
-Y^{\iota_1}(ne_I^a)Y_aY^{\iota_2}\widehat{\gamma}_{CJC}\big\}\\
\notag&+Y^\iota\bigg[\gamma_{IJC} e_C \widehat{n}-n\widehat{\gamma}_{CID}\gamma_{DJC}-n\widehat{\gamma}_{IJD}\gamma_{CCD}
-n\widetilde{\gamma}_{CID}\widehat{\gamma}_{DJC}
-n\widetilde{\gamma}_{IJD}\widehat{\gamma}_{CCD}\\
\notag&-(1+c_s^2)n(\widehat{\rho^{1-2r_s}} v_I v_J+\widetilde{\rho}^{1-2r_s} \widehat{v}_I v_J+\widetilde{\rho}^{1-2r_s} \widetilde{v}_I \widehat{v}_J)
-\frac{1}{2}\delta_{IJ}(1-c_s^2)\widehat{n}\rho\\
\notag&-\frac{1}{2}\delta_{IJ}(1-c_s^2)\widehat{\rho}
+\delta_{IJ}H(\widehat{n}+\widehat{n}^2)+\mathcal{O}(e^{-2Ht})\widehat{n}+\delta_{IJ}\mathcal{O}(e^{-2Ht})\widehat{n}^2\\
\notag&-\big[3H+\widehat{n}+\mathcal{O}(e^{-2Ht})\big]\widehat{n}\widehat{k}_{IJ}
-\big[\widehat{n}+\mathcal{O}(e^{-2Ht})\big]\widehat{k}_{IJ}\bigg],\\
\label{mathfrak.G}\mathfrak{G}^\iota_{IJB}=&
\sum_{\iota_1\cup\iota_2=\iota,\,|\iota_2|<|\iota|}\big\{
Y^{\iota_1}(ne_B^a)Y_aY^{\iota_2}\widehat{k}_{IJ}-Y^{\iota_2}(ne_J^a)Y_aY^{\iota_2} \widehat{k}_{BI}
\big\}\\
\notag&+Y^\iota\bigg[n\widehat{k}_{IC} \gamma_{JBC}
+n\widehat{k}_{BC} \gamma_{JIC}+n\widehat{k}_{IC} \gamma_{CJB}
-n\widehat{k}_{IC}\gamma_{BJC}-n\widehat{k}_{CJ} \gamma_{BIC}\\
\notag&-H\widehat{n}\gamma_{IJB}+\widehat{n}\mathcal{O}(e^{-2Ht})\gamma_{BJI}
+\widehat{n}\mathcal{O}(e^{-2Ht})\gamma_{BIJ}
+\widehat{n}\mathcal{O}(e^{-2Ht})\gamma_{JBI}\\
\notag&+\widehat{n}\mathcal{O}(e^{-2Ht})\gamma_{JIB}
+\widehat{n}\mathcal{O}(e^{-2Ht})\gamma_{IJB}
+\mathcal{O}(e^{-2Ht})\widehat{\gamma}_{BJI}
+\mathcal{O}(e^{-2Ht})\widehat{\gamma}_{BIJ}\\
\notag&+\mathcal{O}(e^{-2Ht})\widehat{\gamma}_{JBI}
+\mathcal{O}(e^{-2Ht})\widehat{\gamma}_{JIB}
+\mathcal{O}(e^{-2Ht})\widehat{\gamma}_{IJB}
+(e_B \widehat{n})k_{JI}-(e_J \widehat{n})k_{BI}\bigg],\\
\label{mathfrak.E}\mathfrak{E}_{Ii}^\iota=&\,Y^\iota\big\{\widehat{n}k_{IC}e_C^i+\widehat{k}_{IC} \widehat{e}_C^i+\mathcal{O}(e^{-2Ht})\widehat{e}_I^i\big\},\\
\label{mathfrak.N}\mathfrak{N}^\iota
=&\sum_{\iota_1\cup\iota_2=\iota,\,|\iota_2|<|\iota|}(Y^{\iota_1}e_C^a)Y_aY^\iota e_C\widehat{n}
+Y^\iota\bigg[\mathcal{O}(e^{-2Ht})n\widehat{k}_{CC}-\gamma_{CCD}e_D\widehat{n}-n\widehat{k}_{CD}\widehat{k}_{CD}\\
\notag&+(1+c_s^2)\big\{\widehat{n}\rho^{1-2r_s}v_Cv_C+\widehat{\rho^{1-2r_s}}v_Cv_C+\widetilde{\rho}^{1-2r_s}\widehat{v}_Cv_C
+\widetilde{\rho}^{1-2r_s}\widetilde{v}_1\widehat{v}_1\big\}\\
\notag&+\frac{3}{2}(1-c_s^2)\widehat{n}\rho+\frac{3}{2}(1-c_s^2)\widehat{\rho}
-2H\widehat{n}^2+\mathcal{O}(e^{-2Ht})\widehat{n}\bigg],\\
\label{mathfrak.M}\mathfrak{M}_I^\iota=&
-\sum_{\iota_1\cup\iota_2=\iota,\,|\iota_2|<|\iota|}(Y^{\iota_1}e_C^a)Y_aY^{\iota_2}k_{CI}
-Y^\iota\bigg[e_I\widehat{n}+  
\widehat{k}_{ID} \gamma_{CDC}
-\widehat{k}_{CD} \gamma_{CID}+(\widetilde{k}_{ID}+\delta_{ID}H)\widehat{\gamma}_{CDC}\\
\notag&-(\widetilde{k}_{CD}+\delta_{CD}H)\widehat{\gamma}_{CID}
+(1+c_s^2)\big\{\widehat{\rho^{1-2r_s}}v_0v_I+\widetilde{\rho}^{1-2r_s}\widehat{v}_0v_I+\widetilde{\rho}^{1-2r_s}\widetilde{v}_0\widehat{v}_I\big\}\bigg].
\end{align}
The preceding terms can be viewed as error terms. 
\begin{lemma}\label{lem:frak.est}
Let $N\ge7$ and assume that the bootstrap assumptions \eqref{Boots} are valid. Then the following estimates holds: 
\begin{align}\label{frak.est}
&e^{Ht}\|\mathfrak{K}_{IJ}^\iota\|_{L^2(\Sigma_t)}
+e^{Ht}\|\mathfrak{G}_{IJB}^\iota\|_{L^2(\Sigma_t)}
+e^{Ht}\|\mathfrak{E}_{Ii}^\iota\|_{L^2(\Sigma_t)}
+e^{\frac{3}{2}Ht}\|\mathfrak{N}^\iota\|_{L^2(\Sigma_t)}
+e^{Ht}\|\mathfrak{M}^\iota_I\|_{L^2(\Sigma_t)}\\
\notag\leq&\,Ce^{-\frac{1}{2}Ht}\sum_{C=1}^3e^{\frac{3}{2}Ht}\|e_C\widehat{n}\|_{H^N(\Sigma_t)}+C e^{-\frac{1}{2}Ht}\sqrt{\mathcal{E}_{tot}(t)}
\end{align}
for every $|\iota|\leq N$ and $t\in[T,T_{Boot})$.
\end{lemma}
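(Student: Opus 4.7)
The plan is a term-by-term estimation of \eqref{mathfrak.K}--\eqref{mathfrak.M} using the Moser product rule
\begin{equation*}
\|Y^\iota(uv)\|_{L^2(\Sigma_t)}\lesssim \|u\|_{L^\infty(\Sigma_t)}\|v\|_{H^{|\iota|}(\Sigma_t)}+\|u\|_{H^{|\iota|}(\Sigma_t)}\|v\|_{L^\infty(\Sigma_t)},
\end{equation*}
together with the bootstrap Sobolev bounds \eqref{WN-3.est}--\eqref{rho.hat.boots}, the pointwise estimates of Lemma \ref{lem:Linfty.est}, and the background behavior from Lemma \ref{lem:hom.est}. For each term the target is to absorb it, after the prescribed weight $e^{Ht}$ or $e^{\frac{3}{2}Ht}$ in \eqref{frak.est}, into either $Ce^{-\frac{1}{2}Ht}\sqrt{\mathcal{E}_{tot}(t)}$ or the exceptional contribution involving $\|e_C\widehat n\|_{H^N(\Sigma_t)}$.

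I would first dispose of the commutator pieces produced by \eqref{comm.eI}. Each such summand is a product of some $Y^{\iota_1}(ne^a_\bullet)$ or $Y^{\iota_1}e^a_\bullet$ (split as $\widetilde e+\widehat e$, pointwise bounded by $Ce^{-Ht}$ when $|\iota_1|\le N-2$ thanks to \eqref{WN-3.est} and Lemma \ref{lem:hom.est}, and bounded in $H^N$ otherwise) paired with $Y_aY^{\iota_2}$ of one of the unknowns. A high-low split based on which index set is at most $\lfloor N/2\rfloor$ puts the lower-index factor in $L^\infty$ and the other in $L^2$; direct weight counting then yields the claimed bound. The only place where this scheme produces an effectively $(N+1)$-st derivative of a base variable is the commutator with $-e_Ie_J\widehat n$ in $\mathfrak K^\iota_{IJ}$: the surviving factor $Y_aY^{\iota_2}e_J\widehat n$ has $L^2$-norm controlled only by $\|e_J\widehat n\|_{H^N}$, and this is exactly why the term $e^{\frac{3}{2}Ht}\|e_C\widehat n\|_{H^N}$ appears on the right-hand side of \eqref{frak.est}. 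The same phenomenon accounts for $Y^\iota(\gamma_{IJC}e_C\widehat n)$ in $\mathfrak K$ and for $(e_B\widehat n)k_{JI}$, $(e_J\widehat n)k_{BI}$ in $\mathfrak G$, where the top-order derivative can all fall on $e_\bullet\widehat n$ and the $\gamma$ or $k$ factor is bounded in $L^\infty$ by $Ce^{-Ht}$ or $C$ respectively.

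Next I would handle the purely geometric nonlinearities in $\mathfrak K,\mathfrak G,\mathfrak E,\mathfrak N$. These are either products of two hatted variables or a hatted variable paired with a $\mathcal{O}(e^{-2Ht})$, $\widetilde\gamma$, or $\widetilde k$ background factor. Putting the factor with better a priori decay in $L^\infty$ (using \eqref{WN-3.est}) and the other in $H^N$ via \eqref{Boots} bounds each such term by $\varepsilon e^{-2Ht}\sqrt{\mathcal{E}_{geom}}$ or better, so the prescribed weights $e^{Ht}$ or $e^{\frac{3}{2}Ht}$ leave decay at least $e^{-\frac{1}{2}Ht}\sqrt{\mathcal{E}_{tot}}$. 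The terms involving $\widehat n$ alone are handled similarly, using the stronger bootstrap bound $\|\widehat n\|_{W^{N-2,\infty}}\le C\varepsilon e^{-\frac{3}{2}Ht}$ from \eqref{WN-3.est}.

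The main effort and genuine obstacle lies in the fluid--geometry couplings inside $\mathfrak K,\mathfrak N,\mathfrak M$, of the schematic form $\widetilde\rho^{1-2r_s}\widehat v_\bullet v_\bullet$, $\widehat{\rho^{1-2r_s}}v_\bullet v_\bullet$, and $\widetilde\rho^{1-2r_s}\widehat v_0 v_\bullet$, with $v=\widehat v+\widetilde v$ of order $e^{-Ht}$ in $L^\infty$. Placing $\widehat v$ at top order and reading $\|\widehat v\|_{H^N(\Sigma_t)}\le C(e^{-Ht}+e^{(2A_s-1)Ht})\sqrt{\mathcal{E}_{tot}(t)}$ off \eqref{fluid.en}, together with $\|\widetilde\rho^{1-2r_s}\|_{L^\infty}\le Ce^{-2Ht}$ and $\|v\|_{L^\infty}\le Ce^{-Ht}$, gives after the weight $e^{Ht}$ a net contribution of size $e^{(A_s-\frac{5}{2})Ht}\sqrt{\mathcal{E}_{tot}}$; the crucial point is that $A_s<\frac{1}{2}$ in the range $c_s^2\in(\frac{1}{3},\frac{3}{7})$, so this is dominated by $e^{-2Ht}\sqrt{\mathcal{E}_{tot}}$. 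The $\widehat{\rho^{1-2r_s}}$ terms are treated identically via \eqref{rho.hat.boots}, where $(1-2r_s)/r_s>\frac{4}{3}$ provides enough decay. Verifying these weight counts uniformly over the restricted range is the only subtle part of the argument; everything else is routine Moser/Sobolev bookkeeping.
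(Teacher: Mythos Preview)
Your proposal is correct and follows essentially the same approach as the paper: both rely on the observation that $N\ge 7$ guarantees at most one factor per term carries more than $N-4$ derivatives, then apply the bootstrap/background bounds \eqref{WN-3.est}--\eqref{rho.hat.boots} and count powers of $e^{Ht}$, singling out the places where a top-order $e_\bullet\widehat n$ survives. Two small clean-ups: your fluid-coupling exponent should read $2A_s$ rather than $A_s$ (with the weight $e^{\frac{3}{2}Ht}$ for $\mathfrak N$ one gets $e^{(2A_s-\frac{5}{2})Ht}$, matching the paper), and the $\|e_C\widehat n\|_{H^N}$ contribution also arises from the $e_Ce_C\widehat n$ commutator and $\gamma_{CCD}e_D\widehat n$ in $\mathfrak N^\iota$ and from $e_I\widehat n$ in $\mathfrak M^\iota$---all handled by the same mechanism you describe.
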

\begin{proof}
First, we observe that since $N\ge7$, each term in \eqref{mathfrak.K}-\eqref{mathfrak.M} has at most one factor with more than $N-4$ spatial derivatives. Hence, all terms in \eqref{mathfrak.K}-\eqref{mathfrak.M} can be controlled using the bootstrap assumptions \eqref{Boots} and the basic estimates \eqref{WN-3.est}, \eqref{rho.hat.boots}. We conclude \eqref{frak.est} by counting the powers of $e^{Ht}$ that correspond to each factor in every term, recalling the definition of the energies \eqref{geom.en}-\eqref{tot.en}. Indeed, we include here examples of terms containing geometric variables that decay the slowest: 
\begin{align}\label{frak.least.est}
\begin{split}
e^{Ht}\|\partial^\iota (n\widehat{\gamma}_{CID}\gamma_{DJC})\|_{L^2(\Sigma_t)}
\leq &\,Ce^{Ht}(e^{-Ht}\|\widehat{\gamma}\|_{H^N(\Sigma_t)})\leq Ce^{-Ht}\sqrt{\mathcal{E}_{tot}(t)},\\
e^{Ht}\|\partial^\iota ((e_B\widehat{n})k_{IJ})\|_{L^2(\Sigma_t)}
\leq &\,Ce^{Ht}\sum_{C=1}^3\|e_C\widehat{n}\|_{H^N(\Sigma_t)}\leq Ce^{-\frac{1}{2}Ht}\sum_{C=1}^3e^{\frac{3}{2}Ht}\|e_B\widehat{n}\|_{H^N(\Sigma_t)},\\
e^{\frac{3}{2}Ht}\|\partial^\iota (n\widehat{k}_{CD}\widehat{k}_{CD})\|_{L^2(\Sigma_t)}\leq&\,Ce^{\frac{1}{2}Ht}\|\widehat{k}\|_{H^N(\Sigma_t)}\leq Ce^{-\frac{1}{2}Ht}\sqrt{\mathcal{E}_{tot}(t)}.
\end{split}
\end{align}
The ones containing fluid variables are in fact more decaying decay quicker. We include the following examples of the slowest decaying terms: 
\begin{align}\label{frak.least.est2}
\notag
e^{\frac{3}{2}Ht}\|\partial^\iota\widehat{\rho}\|_{L^2(\Sigma_t)}\leq&\,C e^{\frac{3}{2}Ht}e^{-\frac{1}{r_s}Ht}\sqrt{\mathcal{E}_{tot}(t)}\leq Ce^{-\frac{3}{2}Ht}\sqrt{\mathcal{E}_{tot}(t)},\qquad (\text{$\frac{1}{r_s}>3$ for $\frac{1}{3}<c_s^2<\frac{3}{7}$})\\
e^{\frac{3}{2}Ht}\|\partial^\iota(\widetilde{\rho}^{1-2r_s}\widehat{v}_Cv_C)\|_{L^2(\Sigma_t)}\leq&\,C e^{\frac{3}{2}Ht}e^{-3Ht}\|\widehat{v}\|_{H^N(\Sigma_t)}\\
\notag\leq&\, Ce^{-\frac{3}{2}Ht}e^{2A_sHt-Ht}\sqrt{\mathcal{E}_{tot}(t)}\\
\notag\leq&\, Ce^{-\frac{3}{2}Ht}\sqrt{\mathcal{E}_{tot}(t)}.\qquad\qquad (\text{$2A_s<1$ for $\frac{1}{3}<c_s^2<\frac{3}{7}$})
\end{align}
One can tediously check that all other terms in \eqref{mathfrak.K}-\eqref{mathfrak.M} either decay faster or are at the same rate as above. 
\end{proof}
We will make use of the following energy identity.
\begin{lemma}\label{lem:geom.en.id}
The geometric variables $\widehat{k}_{IJ},\widehat{\gamma}_{IJB},\widehat{e}_I^i,\widehat{n}$ satisfy:
\begin{align}\label{geom.en.id}
\notag&\frac{1}{2}\partial_t(e^{2Ht}Y^\iota \widehat{k}_{IJ}Y^\iota \widehat{k}_{IJ})
+\frac{1}{4}\partial_t(e^{2Ht}Y^\iota \widehat{\gamma}_{IJB}Y^\iota \widehat{\gamma}_{IJB})
+\frac{1}{2}\partial_t(e^{2Ht}Y^\iota \widehat{e}_I^iY^\iota \widehat{e}_I^i)
+\frac{1}{2}\partial_t[e^{3Ht}(Y^\iota \widehat{n})^2]\\
\notag&+2He^{2Ht}Y^\iota \widehat{k}_{IJ}Y^\iota \widehat{k}_{IJ}
+\frac{1}{2}He^{2Ht}(Y^\iota\widehat{n})^2\\
=&-e^{2Ht}e_I(Y^\iota\widehat{k}_{IJ}Y^\iota e_J \widehat{n})+e^{2Ht}ne_C(Y^\iota\widehat{k}_{IJ}Y^\iota \widehat{\gamma}_{IJC})-e^{2Ht}ne_I(Y^\iota\widehat{k}_{IJ}Y^\iota\widehat{\gamma}_{CJC})\\
\notag&+e^{3Ht}e_C(Y^\iota\widehat{n}Y^\iota e_C \widehat{n})
-\sum_{C=1}^3e^{3Ht}(Y^\iota e_C \widehat{n})^2
+\sum_{\iota_1\cup\iota_2=\iota,\,|\iota_2|<|\iota|}e^{3Ht}(Y^{\iota_1}e_C^a)(Y_aY^{\iota_2}\widehat{n})Y^\iota e_C\widehat{n}\\
\notag&+e^{2Ht}\big\{\mathfrak{M}^\iota_J Y^\iota e_J\widehat{n}+n\mathfrak{M}^\iota_J Y^\iota\widehat{\gamma}_{CJC}+Y^\iota\widehat{k}_{IJ}\mathfrak{K}^\iota_{IJ}
+Y^\iota\widehat{\gamma}_{IJB}\mathfrak{G}_{IJB}^\iota
+Y^\iota\widehat{e}_I^i\mathfrak{E}^\iota_{Ii}\big\}
+e^{3Ht}(Y^\iota\widehat{n})\mathfrak{N}^\iota,
\end{align}
for every $|\iota|\leq N$.
\end{lemma}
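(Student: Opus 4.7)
The plan is to multiply each of the four commuted evolution equations \eqref{k.hat.diff.eq}--\eqref{n.hat.diff.eq} by an appropriately weighted copy of the same variable: $e^{2Ht}Y^\iota\widehat{k}_{IJ}$, $\tfrac12 e^{2Ht}Y^\iota\widehat{\gamma}_{IJB}$, $e^{2Ht}Y^\iota\widehat{e}_I^i$, and $e^{3Ht}Y^\iota\widehat{n}$ respectively, then sum the results. The product-rule identity $e^{\alpha Ht}u\,\partial_t u = \tfrac12\partial_t(e^{\alpha Ht}u^2) - \tfrac{\alpha H}{2}e^{\alpha Ht}u^2$ rewrites each time derivative in the weighted-energy form on the first line of the identity; combined with the mass coefficients $3H$, $H$, $H$, $2H$ appearing on the LHS of \eqref{k.hat.diff.eq}--\eqref{n.hat.diff.eq}, this yields precisely the surplus coercive terms $2He^{2Ht}|Y^\iota\widehat{k}|^2$ and $\tfrac12 H e^{3Ht}(Y^\iota\widehat{n})^2$, while the $\widehat{\gamma}$ and $\widehat{e}$ mass contributions cancel exactly.

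The first nontrivial step is to handle the principal $\widehat{k}\leftrightarrow\widehat{\gamma}$ cross terms. Exploiting the antisymmetry $\widehat{\gamma}_{IJB}=-\widehat{\gamma}_{IBJ}$, the symmetry $\widehat{k}_{IJ}=\widehat{k}_{JI}$, and a relabeling $J\leftrightarrow B$, the pair $ne_BY^\iota\widehat{k}_{IJ}-ne_JY^\iota\widehat{k}_{BI}$ from the $\widehat{\gamma}$ equation, once contracted with $\tfrac12 Y^\iota\widehat{\gamma}_{IJB}$, reduces to $ne_BY^\iota\widehat{k}_{IJ}\,Y^\iota\widehat{\gamma}_{IJB}$. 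Adding this to the term $Y^\iota\widehat{k}_{IJ}\,ne_CY^\iota\widehat{\gamma}_{IJC}$ from the $\widehat{k}$ equation and applying the Leibniz rule produces the exact divergence $ne_C(Y^\iota\widehat{k}_{IJ}Y^\iota\widehat{\gamma}_{IJC})$ appearing in the identity, modulo a lapse-gradient factor $(e_Cn)(\cdots)$ that is lower order and absorbed into the $\mathfrak{K}$ or $\mathfrak{G}$ error terms.

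The remaining two $\widehat{k}$-principal terms $-e_IY^\iota e_J\widehat{n}$ and $-ne_IY^\iota\widehat{\gamma}_{CJC}$ are integrated by parts in $e_I$: the boundary pieces match the claimed divergences $-e_I(Y^\iota\widehat{k}_{IJ}Y^\iota e_J\widehat{n})$ and $-ne_I(Y^\iota\widehat{k}_{IJ}Y^\iota\widehat{\gamma}_{CJC})$, while the bulk pieces carry a factor $e_IY^\iota\widehat{k}_{IJ}=e_CY^\iota\widehat{k}_{CJ}$ that is directly replaced by $\mathfrak{M}^\iota_J$ via the commuted momentum constraint \eqref{mom.hat.diff.const}, yielding the explicit error terms $\mathfrak{M}^\iota_JY^\iota e_J\widehat{n}$ and $n\mathfrak{M}^\iota_JY^\iota\widehat{\gamma}_{CJC}$. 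For the lapse equation, the Laplacian-type source $e_CY^\iota e_C\widehat{n}$ paired with $e^{3Ht}Y^\iota\widehat{n}$ is integrated by parts to produce the total divergence $e_C(Y^\iota\widehat{n}\,Y^\iota e_C\widehat{n})$ together with $-(e_CY^\iota\widehat{n})(Y^\iota e_C\widehat{n})$; the commutator formula \eqref{comm.eI} rewrites the latter as $-\sum_C(Y^\iota e_C\widehat{n})^2$ plus the explicit commutator correction $\sum_{\iota_1\cup\iota_2=\iota,\,|\iota_2|<|\iota|}(Y^{\iota_1}e_C^a)(Y_aY^{\iota_2}\widehat{n})\,Y^\iota e_C\widehat{n}$.

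The main obstacle, such as it is, is purely bookkeeping: verifying that every lapse-gradient term, Leibniz byproduct, and $[Y^\iota,e_I]$ commutator remainder generated along the way falls within the schematic forms captured by $\mathfrak{K}_{IJ}^\iota$, $\mathfrak{G}_{IJB}^\iota$, $\mathfrak{E}_{Ii}^\iota$, $\mathfrak{N}^\iota$, $\mathfrak{M}_I^\iota$ as defined in \eqref{mathfrak.K}--\eqref{mathfrak.M}. Since these error quantities were constructed precisely to absorb such terms, the verification is systematic rather than conceptual and completes the proof of the identity.
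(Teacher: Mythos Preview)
Your proposal is correct and follows exactly the approach outlined in the paper's proof: multiply the commuted equations \eqref{k.hat.diff.eq}--\eqref{n.hat.diff.eq} by $e^{2Ht}Y^\iota\widehat{k}_{IJ}$, $\tfrac12 e^{2Ht}Y^\iota\widehat{\gamma}_{IJB}$, $e^{2Ht}Y^\iota\widehat{e}_I^i$, $e^{3Ht}Y^\iota\widehat{n}$, add, differentiate by parts, and substitute the differentiated momentum constraint \eqref{mom.hat.diff.const}. Your write-up is in fact more detailed than the paper's one-sentence proof; the only minor inaccuracy is the mention of a lapse-gradient byproduct in the $\widehat{k}\leftrightarrow\widehat{\gamma}$ coupling---since $n$ factors out of both principal terms, the Leibniz combination $nY^\iota\widehat{k}_{IJ}\,e_CY^\iota\widehat{\gamma}_{IJC}+nY^\iota\widehat{\gamma}_{IJC}\,e_CY^\iota\widehat{k}_{IJ}=ne_C(Y^\iota\widehat{k}_{IJ}Y^\iota\widehat{\gamma}_{IJC})$ produces no such remainder at the pointwise level.
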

\begin{proof}
It follows by multiplying the equations \eqref{k.hat.diff.eq}, \eqref{gamma.hat.diff.eq}, \eqref{eIi.hat.diff.eq}, \eqref{n.hat.diff.eq} with $e^{2Ht}\widehat{k}_{IJ},\frac{1}{2}e^{2Ht}\widehat{\gamma}_{IJB},e^{2Ht}\widehat{e}_I^i$, $e^{3Ht}\widehat{n}$ respectively, adding the resulting expressions, differentiating by parts, and plugging in the differentiated momentum constraint. 
\end{proof}
Now we can proceed to derive the main estimate in this subsection.
\begin{proposition}\label{prop:geom.est}
Let $N\ge7$ and assume that the bootstrap assumptions \eqref{Boots} are valid. Then the following integral estimate holds:
\begin{align}\label{geom.est}
\mathcal{E}_{geom}(t)+\int_T^t\sum_{C=1}^3e^{3H\tau}\|e_C\widehat{n}\|^2_{H^N(\Sigma_\tau)}d\tau\leq C\mathcal{E}_{geom}(T)+C\int^t_Te^{-\frac{1}{2}H\tau}\mathcal{E}_{tot}(\tau)d\tau,
\end{align}
for all $t\in[T,T_{Boot})$.  
\end{proposition}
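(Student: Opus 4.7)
The plan is to integrate the pointwise energy identity \eqref{geom.en.id} over $(\mathbb{S}^3,d\omega)$, sum over all multi-indices $|\iota|\leq N$, and then integrate in time from $T$ to $t$. Once the divergence-form terms on the RHS are processed via \eqref{IBP}, the good term $-\sum_{C=1}^{3} e^{3H\tau}(Y^\iota e_C \widehat{n})^2$ on the RHS moves to the LHS and supplies the desired coercive integrated lapse quantity $\int_T^t \sum_C e^{3H\tau}\|e_C\widehat{n}\|_{H^N(\Sigma_\tau)}^2\, d\tau$.

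First, I would apply \eqref{IBP} to each of the four divergence-form expressions on the RHS of \eqref{geom.en.id}, namely $-e^{2Ht}e_I(Y^\iota\widehat{k}_{IJ}Y^\iota e_J\widehat{n})$, $e^{2Ht}ne_C(Y^\iota\widehat{k}_{IJ}Y^\iota\widehat{\gamma}_{IJC})$, $-e^{2Ht}ne_I(Y^\iota\widehat{k}_{IJ}Y^\iota\widehat{\gamma}_{CJC})$, and $e^{3Ht}e_C(Y^\iota\widehat{n}Y^\iota e_C\widehat{n})$. Each reduces, after integration over $\Sigma_\tau$, to a $(Y_ie^i_I)$-weighted quadratic term (and, for the $n$-weighted divergences, an additional $(e_Cn)=(e_C\widehat{n})$ product since $\widetilde{n}\equiv 1$). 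Because $e^i_I=\widetilde{e}^i_I+\widehat{e}^i_I$ with $|\widetilde{e}^i_I|=\mathcal{O}(e^{-Ht})$ and $\widehat{e}^i_I$ controlled by \eqref{Boots}, the frame correction satisfies $\|Y_ie^i_I\|_{L^\infty}\leq Ce^{-H\tau}$, while the lapse correction obeys $\|e_C\widehat{n}\|_{L^\infty}\leq C\varepsilon e^{-3H\tau/2}$ by \eqref{WN-3.est}.

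Next, I would estimate the resulting quadratic products by Cauchy--Schwarz and Young's inequality, splitting each term into a small fraction of $e^{3H\tau}\|e_C\widehat{n}\|_{H^N}^2$ (to be absorbed into the coercive LHS term) plus a remainder bounded by $Ce^{-\delta H\tau}\mathcal{E}_{geom}(\tau)$ for some $\delta>0$. A typical correction of this type is
\[
e^{2H\tau}\int_{\Sigma_\tau}|Y_ie^i_I|\,|Y^\iota\widehat{k}|\,|Y^\iota e_J\widehat{n}|\,d\omega \;\leq\; \delta\, e^{3H\tau}\|e_J\widehat{n}\|_{H^N}^2 + C_\delta\, e^{-H\tau}\|\widehat{k}\|_{H^N}^2,
\]
whose second piece integrates to $C_\delta\int_T^t e^{-3H\tau}\mathcal{E}_{geom}(\tau)\,d\tau$, which is certainly dominated by $\int_T^t e^{-H\tau/2}\mathcal{E}_{tot}(\tau)\,d\tau$. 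The explicit commutator sum $\sum_{\iota_1\cup\iota_2=\iota,|\iota_2|<|\iota|}e^{3H\tau}(Y^{\iota_1}e_C^a)(Y_aY^{\iota_2}\widehat{n})Y^\iota e_C\widehat{n}$ is handled analogously, using $\|Y^{\iota_1}e_C^a\|_{L^\infty}\leq C e^{-H\tau}$ for $|\iota_1|\leq N-2$ (and placing the top-order factor in $L^2$ otherwise).

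Finally, the genuine error terms $\mathfrak{K}^\iota,\mathfrak{G}^\iota,\mathfrak{E}^\iota,\mathfrak{N}^\iota,\mathfrak{M}^\iota$ are controlled through Lemma \ref{lem:frak.est}. For example,
\[
\Big|\int_{\Sigma_\tau}e^{2H\tau}Y^\iota\widehat{k}_{IJ}\cdot\mathfrak{K}^\iota_{IJ}\,d\omega\Big| \;\leq\; \sqrt{\mathcal{E}_{geom}(\tau)}\cdot\Big(Ce^{-H\tau/2}\sqrt{\mathcal{E}_{tot}(\tau)}+Ce^{-H\tau/2}{\textstyle\sum_C} e^{3H\tau/2}\|e_C\widehat{n}\|_{H^N}\Big),
\]
which, after one more application of Young's inequality, splits into $Ce^{-H\tau/2}\mathcal{E}_{tot}(\tau)$ plus a term absorbable by the coercive LHS integral. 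The same procedure handles the $\mathfrak{M}^\iota_J$ contributions, noting that they are already accompanied by factors $e_J\widehat{n}$ or $\widehat{\gamma}_{CJC}$ of a type already treated. Integrating the resulting differential inequality in $\tau$ from $T$ to $t$ and summing over $|\iota|\leq N$ yields \eqref{geom.est}. The main delicate point is keeping track of the absorption parameter $\delta$ across the finitely many quadratic pairings, so that after summation the coefficient in front of the good integral $\int_T^t\sum_C e^{3H\tau}\|e_C\widehat{n}\|_{H^N}^2\,d\tau$ on the LHS remains strictly positive; this is fine since the whole scheme is linear in the number of pairings and independent of $\tau$.
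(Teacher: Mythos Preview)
Your proposal is correct and follows essentially the same approach as the paper: integrate the energy identity \eqref{geom.en.id} over $\Sigma_t$, integrate by parts via \eqref{IBP}, use Lemma \ref{lem:frak.est} for the error terms, and apply Young's inequality with a small parameter (the paper calls it $\eta$, you call it $\delta$) to absorb the lapse-derivative contributions into the coercive integral on the LHS before integrating in time. The paper's proof is simply a more condensed version of the steps you outline.
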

\begin{proof}
Integrating the energy identity \eqref{geom.en.id} in $\Sigma_t$, summing in $|\iota|\leq N$, and integrating by parts using \eqref{IBP} gives
\begin{align}\label{geom.est2}
\notag&\frac{1}{2}\partial_t\big\{e^{2Ht}\|\widehat{k}\|^2_{H^N(\Sigma_t)}
+\frac{1}{2}e^{2Ht}\|\widehat{\gamma}\|^2_{H^N(\Sigma_t)}
+e^{2Ht}\|\widehat{e}\|^2_{H^N(\Sigma_t)}
+e^{3Ht}\|\widehat{n}\|^2_{H^N(\Sigma_t)}\big\}
+\sum_{C=1}^3e^{3Ht}\|e_C\widehat{n}\|^2_{H^N(\Sigma_t)}\\
\leq&\sum_{|\iota|\leq N}\int_{\Sigma_t}e^{2Ht}\big\{
(Y_ae_J^a)(Y^\iota\widehat{k}_{IJ}Y^\iota \widehat{n})-Y_a(ne_C^a)(Y^\iota\widehat{k}_{IJ}Y^\iota \widehat{\gamma}_{IJC})+Y_a(ne_I^a)(Y^\iota\widehat{k}_{IJ}Y^\iota\widehat{\gamma}_{CJC})
\big\}d\omega\\
\notag&-\sum_{|\iota|\leq N}\int_{\Sigma_t}e^{3Ht}(Y_ae_C^a)Y^\iota \widehat{n}Y^\iota e_C\widehat{n}
d\omega+\sum_{|\iota|\leq N}\int_{\Sigma_t}\bigg[\sum_{\iota_1\cup\iota_2=\iota,\,|\iota_2|<|\iota|}e^{3Ht}(Y^{\iota_1}e_C^a)(Y_aY^{\iota_2}\widehat{n})Y^\iota e_C\widehat{n}\\
\notag&+e^{2Ht}\big\{\mathfrak{M}^\iota_J Y^\iota e_J\widehat{n}+n\mathfrak{M}^\iota_J Y^\iota\widehat{\gamma}_{CJC}+Y^\iota\widehat{k}_{IJ}\mathfrak{K}^\iota_{IJ}
+Y^\iota\widehat{\gamma}_{IJB}\mathfrak{G}_{IJB}^\iota
+Y^\iota\widehat{e}_I^i\mathfrak{E}^\iota_{Ii}\big\}
+e^{3Ht}(Y^\iota\widehat{n})\mathfrak{N}^\iota \bigg]d\omega.
\end{align}
Next, we integrate in $[T,t]$, make use of the bootstrap assumptions \eqref{Boots}, the basic estimates \eqref{WN-3.est}, Lemma \ref{lem:frak.est} and Young's inequality to obtain the energy inequality:
\begin{align}\label{geom.est3}
&\mathcal{E}_{geom}(t)+\int_T^t\sum_{C=1}^3e^{3H\tau}\|e_C\widehat{n}\|^2_{H^N(\Sigma_\tau)}d\tau\\
\notag\leq&\,C\mathcal{E}_{geom}(T)
+\int^t_T\frac{C}{\eta}e^{-\frac{1}{2}H\tau}\mathcal{E}_{tot}(\tau)d\tau 
+\int_T^t\eta\sum_{C=1}^3e^{3H\tau}\|e_C\widehat{n}\|^2_{H^N(\Sigma_\tau)}d\tau,
\end{align}
for some $\eta<1$ of our choice, resulting from the application of Young's inequality to the terms having a factor $Y^\iota e_C\widehat{n}$ with $|\iota|=N$, as well as the ones coming from \eqref{frak.est}. Absorbing the last term in the corresponding one in the LHS and readjusting the generic constant $C$, we conclude the desired estimate. 
\end{proof}
\subsection{Energy estimates for the fluid variables}\label{S:top.est}

First, we derive lower order energy estimates for the fluid variables $\widehat{v}_I,\widehat{\rho^{2r_s}}$ by treating the evolution equations \eqref{vI.hat.eq}, \eqref{rho.hat.eq} as transport equations. For the top order energy estimates,  we use the additional equation satisfied by $v_IY^\iota \widehat{v}_I$, see \eqref{vI.hat.top.eq} below. Once we have obtained $H^N(\Sigma_t)$ estimates for the fluid variables $\widehat{v}_I,\widehat{\rho^{2r_s}}$, we control the $H^N(\Sigma_t)$ norm of $\widehat{v}_0$ using the identity:
\begin{align}\label{v0.hat.id.diff}
\widehat{v}_0=\frac{1}{v_0+\widetilde{v}_0}\big\{(\widetilde{v}_C+v_C)\widehat{v}_C
+\widehat{\rho^{2r_s}}\big\}.
\end{align}
To begin with, we commute the equations \eqref{vI.hat.eq}, \eqref{rho.hat.eq} with $Y^\iota$, $|\iota|\leq N$, using \eqref{comm.eI}:
\begin{align}
\label{vI.hat.eq.diff}\partial_tY^\iota\widehat{v}_I+H Y^\iota\widehat{v}_I
=&\,\frac{1}{2}\frac{n}{v_0}e_IY^\iota(\widehat{\rho^{2r_s}})+n\frac{v_C}{v_0}e_CY^\iota\widehat{v}_I+\mathfrak{V}_I^\iota,\\
\label{rho.hat.eq.diff}f_{\frac{1-2r_s}{2r_s}}\partial_tY^\iota\widehat{\rho^{2r_s}}+2 HY^\iota\widehat{\rho^{2r_s}}
=&-n\frac{\rho^{2r_s}}{v_0^2}\frac{v_Iv_D}{v_0}e_DY^\iota\widehat{v}_I
+n\frac{\rho^{2r_s}}{v_0}e_CY^\iota\widehat{v}_C\\
\notag&+n\big(\frac{1-2r_s}{2r_s}-\frac{1}{2}\frac{\rho^{2r_s}}{v_0^2}\big)\frac{v_C}{v_0}e_CY^\iota (\widehat{\rho^{2r_s}})+\mathfrak{P}^\iota,
\end{align}
where 
\begin{align}
\label{frak.VI}\mathfrak{V}_I^\iota=&
\sum_{\iota_1\cup\iota_2=\iota,\,|\iota_2|<|\iota|}\big\{\frac{1}{2}Y^{\iota_1}(\frac{n}{v_0}e_I^a)Y_aY^{\iota_2}(\widehat{\rho^{2r_s}})+Y^{\iota_1}(n\frac{v_C}{v_0}e_C^a)Y_aY^{\iota_2}\widehat{v}_I\big\}\\
\notag&+Y^\iota\big\{
(e_I\widehat{n})v_0+n\gamma_{CDI}\frac{v_D}{v_0}\widehat{v}_C
+n\gamma_{1DI}\frac{\widetilde{v}_1}{v_0}\widehat{v}_D
+\frac{n}{v_0}\widetilde{v}_1^2\widehat{\gamma}_{11I}\\
\notag&+\widehat{n}k_{CI}v_C+\widehat{k}_{CI}v_C+(\widetilde{k}_{CI}+\delta_{CI}H)\widehat{v}_C\big\},\\
\label{f}f_{\frac{1-2r_s}{2r_s}}=&\,\frac{1-2r_s}{2r_s}+\frac{1}{2}\frac{\rho^{2r_s}}{v_0^2},\\
\label{frak.P}\mathfrak{P}^\iota=&
\sum_{\iota_1\cup\iota_2=\iota,\,|\iota_2|<|\iota|}f_{\frac{1-2r_s}{2r_s}}\bigg[
Y^{\iota_1}(f^{-1}_{\frac{1-2r_s}{2r_s}}n\frac{\rho^{2r_s}}{v_0}e_C^a)Y_aY^{\iota_2}\widehat{v}_C-Y^{\iota_1}(f^{-1}_{\frac{1-2r_s}{2r_s}}n\frac{\rho^{2r_s}}{v_0^2}\frac{v_Iv_D}{v_0}e_D^a)Y_aY^{\iota_2}\widehat{v}_I\\
\notag&+Y^{\iota_1}\big\{f^{-1}_{\frac{1-2r_s}{2r_s}}n\big(\frac{1-2r_s}{2r_s}-\frac{1}{2}\frac{\rho^{2r_s}}{v_0^2}\big)\frac{v_C}{v_0}e_C^a\big\}Y_aY^{\iota_2} (\widehat{\rho^{2r_s}})
-2Y^{\iota_1}(f^{-1}_{\frac{1-2r_s}{2r_s}}) HY^{\iota_2}\widehat{\rho^{2r_s}}\bigg]\\
\notag&+\sum_{\iota_1\cup\iota_2=\iota}f_{\frac{1-2r_s}{2r_s}}Y^{\iota_1}(f^{-1}_{\frac{1-2r_s}{2r_s}})Y^{\iota_2}\bigg[
\frac{\rho^{2r_s}}{v_0}\big\{(e_C\widehat{n})v_C
+n\widehat{\gamma}_{CDC}v_D\big\}\\
\notag&-\frac{\rho^{2r_s}}{v_0^2}\frac{v_I}{v_0}
\big\{(e_I\widehat{n})v_0^2+n\gamma_{CDI}\widehat{v}_Cv_D
+n\gamma_{1DI}\widetilde{v}_1\widehat{v}_D+n\widehat{\gamma}_{11I}\widetilde{v}_1^2\big\}\\
\notag&-\frac{1}{2}\frac{\widehat{\rho^{2r_s}}}{v_0^2}\partial_t(\widetilde{\rho}^{2r_s})
+\frac{1}{2}\frac{\widetilde{\rho}^{2r_s}}{v_0^2\widetilde{v}^2_0}\partial_t(\widetilde{\rho}^{2r_s})(v_0+\widetilde{v}_0)\widehat{v}_0
-\widehat{n}\big(\frac{v_Cv_I}{v_0^2}k_{CI}-\mathrm{tr}k\big)\rho^{2r_s}
-\rho^{2r_s}\widehat{n}\\
\notag&+\mathcal{O}(e^{-2Ht})\widehat{\rho^{2r_s}}
-\frac{v_Cv_I}{v_0^2}\widehat{k}_{CI}\rho^{2r_s}
-H\frac{\widehat{\rho^{2r_s}}}{v_0^2}(\rho^{2r_s}+\widetilde{\rho}^{2r_s})
+H\frac{\widetilde{\rho}^{4r_s}}{v_0^2\widetilde{v}_0^2}(v_0+\widetilde{v}_0)\widehat{v}_0\\
\notag&-(\widetilde{k}_{CI}+\delta_{CI}H)
\big(\frac{v_Cv_I}{v_0^2}\widehat{\rho^{2r_s}}+\frac{\widehat{v}_Cv_I}{v_0^2}\widetilde{\rho}^{2r_s}+\frac{\widetilde{v}_C\widehat{v}_I}{v_0^2}\widetilde{\rho}^{2r_s}+\frac{\widetilde{v}_C\widetilde{v}_I}{v_0^2\widetilde{v}_0^2}(v_0+\widetilde{v}_0)\widehat{v}_0\widetilde{\rho}^{2r_s}\big)\bigg].
\end{align}
The equations \eqref{vI.hom.eq}, \eqref{vI.hat.eq}, \eqref{vI.hat.eq.diff} also imply:
\begin{align}\label{vI.hat.top.eq}
\partial_t(v_IY^\iota\widehat{v}_I)+2H v_IY^\iota\widehat{v}_I
=&\,\frac{1}{2}n\frac{v_I}{v_0}e_IY^\iota(\widehat{\rho^{2r_s}})+n\frac{v_Cv_I}{v_0}e_CY^\iota\widehat{v}_I+\mathfrak{S}^\iota
\end{align}
where
\begin{align}
\label{frak.S}\mathfrak{S}^\iota=&\,v_I\mathfrak{V}_I^\iota
+\big\{\frac{1}{2}\frac{n}{v_0}e_I(\widehat{\rho^{2r_s}})+n\frac{v_C}{v_0}e_C\widehat{v}_I
+(e_I\widehat{n})v_0+n\gamma_{CDI}\frac{v_D}{v_0}\widehat{v}_C
+n\gamma_{1DI}\frac{\widetilde{v}_1}{v_0}\widehat{v}_D
+\frac{n}{v_0}\widetilde{v}_1^2\widehat{\gamma}_{11I}\\
\notag&+\widehat{n}k_{CI}v_C+\widehat{k}_{CI}v_C+(\widetilde{k}_{CI}+\delta_{CI}H)v_C\big\}Y^\iota\widehat{v}_I.
\end{align}
At top order, $|\iota|=N$, the equations \eqref{vI.hat.eq.diff}, \eqref{rho.hat.eq.diff}, \eqref{vI.hat.top.eq} combined yield the following energy identity.
\begin{lemma}
The top order fluid variables satisfy:
\begin{align}\label{top.fluid.id}
\notag&\frac{1}{2}\partial_t\big\{e^{4Ht}\rho^{2r_s}Y^\iota \widehat{v}_I Y^\iota \widehat{v}_I
+\frac{1}{2}f_{\frac{1-2r_s}{2r_s}}e^{4Ht}(Y^\iota\widehat{\rho^{2r_s}})^2
-e^{4Ht}\frac{\rho^{2r_s}}{v_0^2}v_Iv_EY^\iota\widehat{v}_IY^\iota\widehat{v}_E \big\}\\
\notag&+\big[He^{4Ht}-\frac{1}{4}\partial_t(f_{\frac{1-2r_s}{2r_s}}e^{4Ht})\big](Y^\iota\widehat{\rho^{2r_s}})^2
-\frac{1}{2}e^{2Ht}\partial_t(e^{2Ht}\rho^{2r_s})
Y^\iota \widehat{v}_I Y^\iota \widehat{v}_I
+\frac{1}{2}\partial_t(\frac{\rho^{2r_s}}{v_0^2})e^{4Ht}
v_Iv_EY^\iota \widehat{v}_I Y^\iota \widehat{v}_E
\\
=&\,\frac{1}{2}ne^{4Ht}\frac{\rho^{2r_s}}{v_0}e_I[Y^\iota(\widehat{\rho^{2r_s}})Y^\iota\widehat{v}_I]
-\frac{1}{2}ne^{4Ht}\frac{\rho^{2r_s}}{v_0^2}\frac{v_Iv_D}{v_0}e_D(Y^\iota\widehat{v}_IY^\iota\widehat{\rho^{2r_s}})
+\frac{1}{2}ne^{4Ht}\rho^{2r_s}\frac{v_C}{v_0}e_C(Y^\iota\widehat{v}_IY^\iota\widehat{v}_I)\\
\notag&+\frac{1}{4}ne^{4Ht}\big(\frac{1-2r_s}{2r_s}-\frac{1}{2}\frac{\rho^{2r_s}}{v_0^2}\big)\frac{v_C}{v_0}e_C[(Y^\iota \widehat{\rho^{2r_s}})^2]
-\frac{1}{2}ne^{4Ht}\rho^{2r_s}\frac{v_C}{v_0}\frac{v_Iv_E}{v_0^2}e_C(Y^\iota\widehat{v}_IY^\iota\widehat{v}_E)\\
\notag&+e^{4Ht}\rho^{2r_s}Y^\iota\widehat{v}_I\mathfrak{V}_I^\iota
+\frac{1}{2}e^{4Ht}(Y^\iota\widehat{\rho^{2r_s}})\mathfrak{P}^\iota
-e^{4Ht}\frac{\rho^{2r_s}}{v_0^2}v_IY^\iota\widehat{v}_I\mathfrak{S}^\iota
\end{align}
for every $|\iota|=N$.
\end{lemma}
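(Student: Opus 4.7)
The plan is to derive the identity \eqref{top.fluid.id} by taking a weighted linear combination of the three differentiated equations \eqref{vI.hat.eq.diff}, \eqref{rho.hat.eq.diff}, \eqref{vI.hat.top.eq}, chosen so that the principal (spatial derivative) terms on the right-hand side recombine into the perfect divergence-type structure displayed in the statement. Concretely, I would multiply \eqref{vI.hat.eq.diff} by $e^{4Ht}\rho^{2r_s}Y^\iota\widehat{v}_I$ (summed in $I$), multiply \eqref{rho.hat.eq.diff} by $\tfrac{1}{2}e^{4Ht}Y^\iota\widehat{\rho^{2r_s}}$, and multiply \eqref{vI.hat.top.eq} by $-e^{4Ht}\tfrac{\rho^{2r_s}}{v_0^2}v_E Y^\iota\widehat{v}_E$, then add.

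For the left-hand side, I would then convert the three $\partial_t$-quadratic terms to total time derivatives using $f\cdot X\partial_tX=\tfrac12\partial_t(fX^2)-\tfrac12(\partial_t f)X^2$, absorbing into the time derivatives the coefficients $e^{4Ht}\rho^{2r_s}$, $f_{\frac{1-2r_s}{2r_s}}e^{4Ht}$, and $e^{4Ht}\rho^{2r_s}/v_0^2$ respectively. The correction pieces coming out of each application of the product rule are precisely the second line of the stated LHS: the $H$-term from \eqref{vI.hat.eq.diff} and the rate $4He^{4Ht}$ combine with $\partial_t(e^{4Ht}\rho^{2r_s})$ to produce the factor $-\tfrac12 e^{2Ht}\partial_t(e^{2Ht}\rho^{2r_s})$; the $2H$-term from \eqref{rho.hat.eq.diff} together with $\partial_t(f_{\frac{1-2r_s}{2r_s}}e^{4Ht})/4$ yields $[He^{4Ht}-\tfrac14\partial_t(f_{\frac{1-2r_s}{2r_s}}e^{4Ht})]$; and the $2H$-term from \eqref{vI.hat.top.eq} cancels with $4He^{4Ht}\rho^{2r_s}/v_0^2$, leaving only the $\tfrac12\partial_t(\rho^{2r_s}/v_0^2)$ contribution. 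A straightforward bookkeeping check confirms that these are the only surviving non-divergence LHS terms.

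For the right-hand side, the key symmetrizations are the product-rule identifications
\begin{align*}
\tfrac12 n\tfrac{\rho^{2r_s}}{v_0}\bigl[Y^\iota\widehat{v}_I\,e_IY^\iota\widehat{\rho^{2r_s}}+Y^\iota\widehat{\rho^{2r_s}}\,e_IY^\iota\widehat{v}_I\bigr]&=\tfrac12 n\tfrac{\rho^{2r_s}}{v_0}\,e_I\bigl(Y^\iota\widehat{\rho^{2r_s}}\,Y^\iota\widehat{v}_I\bigr),\\
-\tfrac12 n\tfrac{\rho^{2r_s}}{v_0^2}\tfrac{v_Iv_D}{v_0}\bigl[Y^\iota\widehat{\rho^{2r_s}}\,e_DY^\iota\widehat{v}_I+Y^\iota\widehat{v}_I\,e_DY^\iota\widehat{\rho^{2r_s}}\bigr]&=-\tfrac12 n\tfrac{\rho^{2r_s}}{v_0^2}\tfrac{v_Iv_D}{v_0}\,e_D\bigl(Y^\iota\widehat{v}_I\,Y^\iota\widehat{\rho^{2r_s}}\bigr),
\end{align*}
which fuse the cross terms coming from \eqref{vI.hat.eq.diff}+\eqref{rho.hat.eq.diff} and from \eqref{rho.hat.eq.diff}+\eqref{vI.hat.top.eq} respectively. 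The pure convective contributions $n\tfrac{v_C}{v_0}e_CY^\iota\widehat{v}_I$ from \eqref{vI.hat.eq.diff} and $n(\tfrac{1-2r_s}{2r_s}-\tfrac12\tfrac{\rho^{2r_s}}{v_0^2})\tfrac{v_C}{v_0}e_CY^\iota\widehat{\rho^{2r_s}}$ from \eqref{rho.hat.eq.diff} immediately assemble into $e_C$-derivatives of $(Y^\iota\widehat{v}_I)^2$ and $(Y^\iota\widehat{\rho^{2r_s}})^2$ respectively. The remaining term from \eqref{vI.hat.top.eq} involves $v_Iv_E e_C(Y^\iota\widehat{v}_IY^\iota\widehat{v}_E)$ after using the $I\leftrightarrow E$ symmetry of the prefactor $v_Iv_E/v_0^2$. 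The source terms $\mathfrak{V}_I^\iota,\mathfrak{P}^\iota,\mathfrak{S}^\iota$ are simply carried along.

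The only slightly subtle aspect, which I view as the main bookkeeping obstacle, is verifying that all the $H$-linear and $\partial_t(\text{coefficient})$-type contributions produced by the three product-rule applications reassemble exactly into the three correction terms on the second line of the stated LHS, with no leftover. This requires checking, for each of the three weighting coefficients, the identity $\tfrac12\partial_t(\text{weight})\cdot X^2 + (\text{Hubble damping term})\cdot X^2 = (\text{stated correction})\cdot X^2$; for the $\rho^{2r_s}/v_0^2$ weight in particular the $4H$ from the exponential factor neatly cancels the $2H$ doubling of $v_IY^\iota\widehat{v}_I$ coming from \eqref{vI.hat.top.eq}, leaving precisely $\tfrac12 e^{4Ht}\partial_t(\rho^{2r_s}/v_0^2)$. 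Once these algebraic cancellations are verified, the identity \eqref{top.fluid.id} follows.
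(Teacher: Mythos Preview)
Your proposal is correct and follows exactly the same approach as the paper: the authors state that the identity is obtained from the algebraic combination $e^{4Ht}\rho^{2r_s}Y^\iota\widehat{v}_I\times\eqref{vI.hat.eq.diff}+\tfrac{1}{2}e^{4Ht}Y^\iota\widehat{\rho^{2r_s}}\times\eqref{rho.hat.eq.diff}-e^{4Ht}\tfrac{\rho^{2r_s}}{v_0^2}v_EY^\iota\widehat{v}_E\times\eqref{vI.hat.top.eq}$ together with the Leibniz rule, which is precisely the weighted combination and product-rule bookkeeping you describe. Your write-up simply makes explicit the cancellations the paper leaves to the reader.
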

\begin{proof}
The energy identity \eqref{top.fluid.id} is obtained by considering the following algebraic combination of equations: 
\begin{align*}
e^{4Ht}\rho^{2r_s}Y^\iota\widehat{v}_I\times\eqref{vI.hat.eq.diff}+\frac{1}{2}e^{4Ht}Y^\iota\widehat{\rho^{2r_s}}\times\eqref{rho.hat.eq.diff}-e^{4Ht}\frac{\rho^{2r_s}}{v_0^2}v_EY^\iota\widehat{v}_E\times\eqref{vI.hat.top.eq}
\end{align*}
and applying Leibnitz rule.
\end{proof}
Next, we control the error terms \eqref{frak.VI}, \eqref{frak.P}, \eqref{frak.S}.
\begin{lemma}\label{lem:frak.fluid.est}
Let $N\ge7$ and assume the bootstrap assumptions \eqref{Boots} are valid.
Then the expressions $\mathfrak{V}_I,\mathfrak{P}^\iota,\mathfrak{S}^\iota$ satisfy the lower order estimate: 
\begin{align}\label{error.fluid.less.top.est}
\sum_{|\iota|\leq N-1}\sum_{I=1}^3e^{Ht}\|\mathfrak{V}^\iota_I\|_{L^2(\Sigma_t)}+\sum_{|\iota|\leq N-2}e^{\frac{4r_s}{1-2r_s}Ht}\|\mathfrak{P}^\iota\|_{L^2(\Sigma_t)}\leq&\, C(e^{-Ht}+e^{2Ht-\frac{4r_s}{1-2r_s}Ht})\sqrt{\mathcal{E}_{tot}(t)}
\end{align}
where we recall that $\frac{4r_s}{1-2r_s}>2$, $r_s>\frac{1}{4}$,
and the top order estimates:
\begin{align}\label{error.fluid.top.est}
\sum_{|\iota|=N}\bigg[\sum_{I=1}^3e^{2Ht}\rho^{r_s}\|\mathfrak{V}^\iota_I\|_{L^2(\Sigma_t)}
+e^{2Ht}\frac{\rho^{r_s}}{v_0}\|\mathfrak{S}^\iota\|_{L^2(\Sigma_t)}\bigg]
\leq &\,Ce^{-Ht}e^{A_sHt}\sqrt{\mathcal{E}_{tot}(t)}+C\sum_{I=1}^3\|e_I\widehat{n}\|_{H^N(\Sigma_t)},
\end{align}
\begin{align}\label{error.fluid.top.est2}
\sum_{|\iota|=N-1,N}e^{2Ht}\|\mathfrak{P}^\iota\|_{L^2(\Sigma_t)}\leq&\,C(e^{-Ht}+e^{2Ht-\frac{4r_s}{1-2r_s}Ht})\sqrt{\mathcal{E}_{tot}(t)}+C\sum_{I=1}^3\|e_I\widehat{n}\|_{H^N(\Sigma_t)},
\end{align}
for all $t\in[T,T_{Boot})$.
\end{lemma}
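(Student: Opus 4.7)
The proof reduces to a careful bookkeeping exercise on the expressions \eqref{frak.VI}, \eqref{frak.P}, \eqref{frak.S}. Each is a finite sum of commuted nonlinear terms, and every summand is bounded by expanding it with the Leibniz rule: since $N \ge 7$, one places the highest-derivative factor in $L^2(\Sigma_t)$ (absorbed into an energy) and puts every remaining factor in $L^\infty(\Sigma_t)$ via the Sobolev inequality \eqref{Sob} together with \eqref{WN-3.est}--\eqref{rho.hat.boots}. For the rational-in-$(v_0,\rho^{2r_s},n)$ coefficients appearing everywhere (such as $v_0^{-1}$, $\rho^{2r_s}/v_0^2$, and $f_{(1-2r_s)/(2r_s)}^{-1}$), I would first derive $H^{N-1}$ and $L^\infty$ bounds by differentiating their Taylor expansions about the homogeneous background and invoking Lemma \ref{lem:Linfty.est} together with the background rates of Lemma \ref{lem:red.var.hom}; the critical pointwise sizes are $v_0\sim e^{-Ht}$, $v_I\sim e^{-Ht}\delta_{1I}$, $\rho^{2r_s}\sim e^{-\frac{4r_s}{1-2r_s}Ht}$, and $\rho^{2r_s}/v_0^2\sim e^{2Ht-\frac{4r_s}{1-2r_s}Ht}$, all decaying exponentially in the admissible range $c_s^2\in(\tfrac13,\tfrac37)$.

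For the lower-order estimate \eqref{error.fluid.less.top.est} (where $|\iota|\le N-1$ for $\mathfrak{V}_I^\iota$ and $|\iota|\le N-2$ for $\mathfrak{P}^\iota$), every factor already sits within the low-order part of $\mathcal{E}_{tot}(t)$, so no auxiliary identity is required. The plan is to enumerate the slowest-decaying contributions: for $\mathfrak{V}_I^\iota$ these are $\widehat{k}_{CI}v_C$ and $\widehat{n}k_{CI}v_C$, whose weighted $L^2$ norms give the $e^{-Ht}\sqrt{\mathcal{E}_{tot}(t)}$ bound; for $\mathfrak{P}^\iota$ they are the terms containing $\widehat{k}_{CI}\rho^{2r_s}$ or the driving pieces proportional to $\partial_t(\widetilde{\rho}^{2r_s})$ multiplied by $\widehat{v}_0$ or $\widehat{\rho^{2r_s}}$, whose weighted $L^2$ norms produce the $e^{2Ht-\frac{4r_s}{1-2r_s}Ht}$ contribution (precisely the $\rho^{2r_s}/v_0^2\to 0$ degeneracy). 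All remaining terms decay strictly faster and are folded into the constant $C$.

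For the top-order estimates \eqref{error.fluid.top.est}--\eqref{error.fluid.top.est2}, the novelty is that at $|\iota|=N$ certain terms produce a summand with $N+1$ total derivatives on $\widehat{n}$; specifically, the term $(e_I\widehat{n})v_0$ inside $\mathfrak{V}_I^\iota$ and $\mathfrak{S}^\iota$ yields, after differentiation, a principal piece $v_0\,Y^\iota e_I\widehat{n}$. I rewrite $Y^\iota e_I = e_I Y^\iota + [Y^\iota,e_I]$ via \eqref{comm.eI} and bound the $L^2$ norm of the principal part by $C\sum_{I}\|e_I\widehat{n}\|_{H^N(\Sigma_t)}$, which is precisely the extra term appearing on the right-hand side of both \eqref{error.fluid.top.est} and \eqref{error.fluid.top.est2}; the commutator is of lower differential order and falls under the bookkeeping above. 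For $\mathfrak{P}^\iota$ at orders $|\iota|=N-1,N$, the weaker weight $e^{-2Ht}$ with which $\mathcal{E}_{fluid.top}(t)$ controls $\|\widehat{\rho^{2r_s}}\|_{\dot{H}^N}$ and $\|\widehat{\rho^{2r_s}}\|_{\dot{H}^{N-1}}$ (compared with the $e^{-\frac{4r_s}{1-2r_s}Ht}$ weight enjoyed at lower orders) is the direct source of the $e^{2Ht-\frac{4r_s}{1-2r_s}Ht}$ factor on the right.

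The main obstacle I anticipate is not any single inequality but the combinatorial task of certifying that none of the many summands beats the advertised decay rate; the algebraic identity $A_s = -1 + \frac{2r_s}{1-2r_s}$, together with the inequalities $A_s<\tfrac12$ and $\frac{4r_s}{1-2r_s}>2$ (both valid only in the restricted range $c_s^2\in(\tfrac13,\tfrac37)$), is what allows every borderline term to close. This margin shrinks to zero as $c_s^2\nearrow\tfrac37$, consistent with the restriction stated in Theorem \ref{thm:A}.
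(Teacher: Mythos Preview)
Your proposal is correct and follows essentially the same route as the paper: both arguments are a systematic bookkeeping of the expressions \eqref{frak.VI}, \eqref{frak.P}, \eqref{frak.S} via the Leibniz rule, placing the highest-derivative factor in $L^2$ and the rest in $L^\infty$ using \eqref{Sob}, \eqref{WN-3.est}--\eqref{rho.hat.boots}, and Lemma \ref{lem:Linfty.est}, then counting powers of $e^{Ht}$ with the aid of the identity $A_s=-1+\tfrac{2r_s}{1-2r_s}$. Your identification of the $(e_I\widehat{n})v_0$ term as the source of the extra $\sum_I\|e_I\widehat{n}\|_{H^N(\Sigma_t)}$ contribution at top order matches the paper's treatment; note, though, that you do not actually need the commutator decomposition $Y^\iota e_I=e_I Y^\iota+[Y^\iota,e_I]$, since $\|Y^\iota(e_I\widehat{n})\|_{L^2}$ is by definition one of the summands in $\|e_I\widehat{n}\|_{H^N}$.
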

\begin{remark}\label{rem:top.fluid.error}
The exponent $A_sHt$ having a bad sign in the RHS of \eqref{error.fluid.top.est} comes from the terms in $\mathfrak{V}_I^\iota,\mathfrak{S}^\iota$ that do not contain a $\rho^{2r_s}$ factor. This leads to the weak top order estimate \eqref{fluid.top.est}, which only allows us to close our bootstrap argument in the range $c_s^2\in(\frac{1}{3},\frac{3}{7})$, see Remark \ref{rem:top.fluid.est}. On the other hand, these terms are not present when looking at the relativistic Euler equations in a fixed FLRW background, which allows to treat the whole range $c_s^2\in(\frac{1}{3},1)$, see Appendix \ref{app:Euler}.
\end{remark}
\begin{proof}
First, observe that since $N\ge7$, there is at most one factor in every term of $\mathfrak{V}_I^\iota,\mathfrak{P}^\iota,\mathfrak{S}^\iota$ that contains more than $N-4$ spatial derivatives, for every $|\iota|\leq N$. Hence, we control all error terms by counting the powers of $e^{Ht}$ corresponding to the lower order (optimal) behavior of each factor, leaving one factor with maximum number of spatial derivatives, using the bootstrap assumptions \eqref{Boots}, the basic estimates \eqref{WN-3.est}, \eqref{rho.hat.boots}, and Lemma \ref{lem:Linfty.est} (to handle the inverse powers of $v_0$).

{\it 1. Lower order error estimate \eqref{error.fluid.less.top.est}.} To control $\mathfrak{V}_I^\iota$, $|\iota|\leq N-1$, we recall the definitions \eqref{geom.en}, \eqref{fluid.en}, \eqref{tot.en} of the energies $\mathcal{E}_{geom}(t),\mathcal{E}_{fluid.low}(t),\mathcal{E}_{tot}(t)$:
\begin{align}\label{frak.fluid.est}
\notag&e^{Ht}\bigg\|\sum_{\iota_1\cup\iota_2=\iota,\,|\iota_2|<|\iota|}\big\{\frac{1}{2}Y^{\iota_1}(\frac{n}{v_0}e_I^a)Y_aY^{\iota_2}(\widehat{\rho^{2r_s}})+Y^{\iota_1}(n\frac{v_C}{v_0}e_C^a)Y_aY^{\iota_2}\widehat{v}_I\big\}\bigg\|_{L^2(\Sigma_t)}\\
\leq&\,Ce^{Ht}\big(\|\widehat{\rho^{2r_s}}\|_{H^{N-1}(\Sigma_t)}+e^{-Ht}\|\widehat{v}\|_{H^{N-1}(\Sigma_t)}\big)\\
\notag\leq&\,C(e^{Ht-\frac{4r_s}{1-2r_s}}+e^{-Ht})\sqrt{\mathcal{E}_{tot}(t)}\leq Ce^{-Ht}\sqrt{\mathcal{E}_{tot}(t)},\qquad\qquad(\frac{4r_s}{1-2r_s}>2,\;r_s>\frac{1}{4})
\end{align}
\begin{align}\label{frak.fluid.est2}
\notag&e^{Ht}\big\|Y^\iota\big\{
(e_I\widehat{n})v_0+n\gamma_{CDI}\frac{v_D}{v_0}\widehat{v}_C
+n\gamma_{1DI}\frac{\widetilde{v}_1}{v_0}\widehat{v}_D
+\frac{n}{v_0}\widetilde{v}_1^2\widehat{\gamma}_{11I}\\
&+\widehat{n}k_{CI}v_C+\widehat{k}_{CI}v_C+(\widetilde{k}_{CI}+\delta_{CI}H)\widehat{v}_C\big\}\big\|_{L^2(\Sigma_t)}\\
\notag\leq&\, Ce^{-Ht}\|\widehat{n}\|_{H^N(\Sigma_t)}+C\big(\|\widehat{v}\|_{H^{N-1}(\Sigma_t)}
+\|\widehat{\gamma}\|_{H^{N-1}(\Sigma_t)}
+\|\widehat{n}\|_{H^{N-1}(\Sigma_t)}
+\|\widehat{k}\|_{H^{N-1}(\Sigma_t)}\big)\\
\notag\leq&\,Ce^{-Ht}\sqrt{\mathcal{E}_{tot}(t)}.
\end{align}
This completes the estimate \eqref{error.fluid.less.top.est} for $\mathfrak{V}_I^\iota$. We proceed to estimate the $L^2(\Sigma_t)$ norm of $\mathfrak{P}^\iota$, $|\iota|\leq N-2$, in a similar manner:
\begin{align}\label{frak.fluid.est3}
\notag &e^{\frac{4r_s}{1-2r_s}Ht}\bigg\|\sum_{\iota_1\cup\iota_2=\iota,\,|\iota_2|<|\iota|}f_{\frac{1-2r_s}{2r_s}}\bigg[
Y^{\iota_1}(f^{-1}_{\frac{1-2r_s}{2r_s}}n\frac{\rho^{2r_s}}{v_0}e_C^a)Y_aY^{\iota_2}\widehat{v}_C-Y^{\iota_1}(f^{-1}_{\frac{1-2r_s}{2r_s}}n\frac{\rho^{2r_s}}{v_0^2}\frac{v_Iv_D}{v_0}e_D^a)Y_aY^{\iota_2}\widehat{v}_I\\
&+Y^{\iota_1}\big\{f^{-1}_{\frac{1-2r_s}{2r_s}}n\big(\frac{1-2r_s}{2r_s}-\frac{1}{2}\frac{\rho^{2r_s}}{v_0^2}\big)\frac{v_C}{v_0}e_C^a\big\}Y_aY^{\iota_2} (\widehat{\rho^{2r_s}})
-2Y^{\iota_1}(f^{-1}_{\frac{1-2r_s}{2r_s}}) HY^{\iota_2}\widehat{\rho^{2r_s}}
\bigg]\bigg\|_{L^2(\Sigma_t)}\\
\notag\leq&\,C\|\widehat{v}\|_{H^{N-2}(\Sigma_t)}+C(e^{-Ht}+e^{2Ht-\frac{4r_s}{1-2r_s}Ht})e^{\frac{4r_s}{1-2r_s}Ht}\|\widehat{\rho^{2r_s}}\|_{H^{N-2}(\Sigma_t)}\qquad\qquad(|\iota_1|\ge1)\\
\notag\leq&\, C(e^{-Ht}+e^{2Ht-\frac{4r_s}{1-2r_s}Ht})\sqrt{\mathcal{E}_{tot}(t)},
\end{align}
\begin{align}\label{frak.fluid.est4}
\notag&e^{\frac{4r_s}{1-2r_s}Ht}\bigg\|\sum_{\iota_1\cup\iota_2=\iota}f_{\frac{1-2r_s}{2r_s}}Y^{\iota_1}(f^{-1}_{\frac{1-2r_s}{2r_s}})Y^{\iota_2}\bigg[
\frac{\rho^{2r_s}}{v_0}\big\{(e_C\widehat{n})v_C
+n\widehat{\gamma}_{CDC}v_D\big\}\\
\notag&-\frac{\rho^{2r_s}}{v_0^2}\frac{v_I}{v_0}
\big\{(e_I\widehat{n})v_0^2+n\gamma_{CDI}\widehat{v}_Cv_D
+n\gamma_{1DI}\widetilde{v}_1\widehat{v}_D+n\widehat{\gamma}_{11I}\widetilde{v}_1^2\big\}\\
&-\frac{1}{2}\frac{\widehat{\rho^{2r_s}}}{v_0^2}\partial_t(\widetilde{\rho}^{2r_s})
+\frac{1}{2}\frac{\widetilde{\rho}^{2r_s}}{v_0^2\widetilde{v}^2_0}\partial_t(\widetilde{\rho}^{2r_s})(v_0+\widetilde{v}_0)\widehat{v}_0
-\widehat{n}\big(\frac{v_Cv_I}{v_0^2}k_{CI}-\mathrm{tr}k\big)\rho^{2r_s}
-\rho^{2r_s}\widehat{n}\\
\notag &+\mathcal{O}(e^{-2Ht})\widehat{\rho^{2r_s}}
-\frac{v_Cv_I}{v_0^2}\widehat{k}_{CI}\rho^{2r_s}
-H\frac{\widehat{\rho^{2r_s}}}{v_0^2}(\rho^{2r_s}+\widetilde{\rho}^{2r_s})
+H\frac{\widetilde{\rho}^{4r_s}}{v_0^2\widetilde{v}_0^2}(v_0+\widetilde{v}_0)\widehat{v}_0\\
\notag&-(\widetilde{k}_{CI}+\delta_{CI}H)
\big(\frac{v_Cv_I}{v_0^2}\widehat{\rho^{2r_s}}+\frac{\widehat{v}_Cv_I}{v_0^2}\widetilde{\rho}^{2r_s}+\frac{\widetilde{v}_C\widehat{v}_I}{v_0^2}\widetilde{\rho}^{2r_s}+\frac{\widetilde{v}_C\widetilde{v}_I}{v_0^2\widetilde{v}_0^2}(v_0+\widetilde{v}_0)\widehat{v}_0\widetilde{\rho}^{2r_s}\big)\bigg]\bigg\|_{L^2(\Sigma_t)}\\
\notag\leq&\,Ce^{-Ht}\|\widehat{n}\|_{H^{N-1}(\Sigma_t)}
+C\big\{\|\widehat{\gamma}\|_{H^{N-2}(\Sigma_t)}
+\|\widehat{v}\|_{H^{N-2}(\Sigma_t)}
+\|\widehat{k}\|_{H^{N-2}(\Sigma_t)}+\|\widehat{n}\|_{H^{N-2}(\Sigma_t)}\big\}\\
\notag&+Ce^{2Ht-\frac{4r_s}{1-2r_s}Ht}e^{Ht}\|\widehat{v}\|_{H^{N-2}(\Sigma_t)}
+e^{2Ht}\|\widehat{\rho^{2r_s}}\|_{H^{N-2}(\Sigma_t)}\\
\notag\leq&\,C(e^{-Ht}+e^{2Ht-\frac{4r_s}{1-2r_s}Ht})\sqrt{\mathcal{E}_{tot}(t)}.
\end{align}
The above estimates give the desired control of $\mathfrak{P}^\iota$, $|\iota|\leq N-2$, in $L^2(\Sigma_t)$, which completes the proof of \eqref{error.fluid.less.top.est}.  

{\it 2. Top order error estimates \eqref{error.fluid.top.est}-\eqref{error.fluid.top.est2}.} More careful power counting is needed due to the discrepancy in the $e^{Ht}$-weights in the energy $\mathcal{E}_{fluid.top}(t)$, compared to $\mathcal{E}_{fluid.low}(t)$ in \eqref{fluid.en}. For the convenience of the reader, we note here the following power relations:
\begin{align}\label{rho.As.rel}
\notag
e^{2Ht}\rho^{r_s}\leq Ce^{Ht}e^{Ht-\frac{2r_s}{1-2r_s}}=&\,Ce^{Ht}e^{-A_sHt},\qquad A_s=\frac{3c_s^2-1}{1-c_s^2}=\frac{2r_s}{1-r_s}-1,\qquad r_s=\frac{c_s^2}{1+c_s^2},\\ e^{2Ht}\rho^{r_s}\|\widehat{v}\|_{\dot{H}^N(\Sigma_t)}\leq&\, Ce^{2Ht-\frac{2r_s}{1-2r_s}}\|\widehat{v}\|_{\dot{H}^N(\Sigma_t)}=
Ce^{Ht-A_sHt}\|\widehat{v}\|_{\dot{H}^N(\Sigma_t)}=Ce^{A_sHt}\sqrt{\mathcal{E}_{tot}(t)},\\
e^{2Ht}\|\widehat{\rho^{2r_s}}\|_{\dot{H}^N(\Sigma_t)}\leq &\,\sqrt{\mathcal{E}_{tot}(t)}.
\notag
\end{align}
We begin first with the estimates for $\mathfrak{V}_I^\iota$, $|\iota|=N$:
\begin{align}\label{frak.fluid.est5}
\notag&e^{2Ht}\rho^{r_s}\bigg\|\sum_{\iota_1\cup\iota_2=\iota,\,|\iota_2|<|\iota|}\big\{\frac{1}{2}Y^{\iota_1}(\frac{n}{v_0}e_I^a)Y_aY^{\iota_2}(\widehat{\rho^{2r_s}})+Y^{\iota_1}(n\frac{v_C}{v_0}e_C^a)Y_aY^{\iota_2}\widehat{v}_I\big\}\bigg\|_{L^2(\Sigma_t)}\\
\leq&\,Ce^{2Ht}\rho^{r_s}\big(\|\widehat{\rho^{2r_s}}\|_{H^N(\Sigma_t)}+e^{-Ht}\|\widehat{v}\|_{H^N(\Sigma_t)}\big)\\
\notag\leq&\,C(e^{-\frac{2r_s}{1-2r_s}Ht}+e^{-Ht}e^{A_sHt})\sqrt{\mathcal{E}_{tot}(t)}
\leq Ce^{-Ht}e^{A_sHt}\sqrt{\mathcal{E}_{tot}(t)},\qquad\qquad(\frac{2r_s}{1-2r_s}>1)
\end{align}
\begin{align}\label{frak.fluid.est6}
\notag&e^{2Ht}\rho^{r_s}\big\|Y^\iota\big\{
(e_I\widehat{n})v_0+n\gamma_{CDI}\frac{v_D}{v_0}\widehat{v}_C
+n\gamma_{1DI}\frac{\widetilde{v}_1}{v_0}\widehat{v}_D
+\frac{n}{v_0}\widetilde{v}_1^2\widehat{\gamma}_{11I}\\
\notag&+\widehat{n}k_{CI}v_C+\widehat{k}_{CI}v_C+(\widetilde{k}_{CI}+\delta_{CI}H)\widehat{v}_C\big\}\big\|_{L^2(\Sigma_t)}\\
\leq&\, Ce^{Ht-\frac{2r_s}{1-2r_s}Ht}\sum_{I=1}^3\|e_I\widehat{n}\|_{H^N(\Sigma_t)}+Ce^{2Ht}\rho^{r_s}e^{-Ht}\|\widehat{v}\|_{H^N(\Sigma_t)}\\
\notag&+C\big(\|\widehat{\gamma}\|_{H^N(\Sigma_t)}
+\|\widehat{n}\|_{H^N(\Sigma_t)}
+\|\widehat{k}\|_{H^N(\Sigma_t)}\big)\\
\notag\leq&\,C\sum_{I=1}^3\|e_I\widehat{n}\|_{H^N(\Sigma_t)}+Ce^{-Ht}e^{A_sHt}\sqrt{\mathcal{E}_{tot}(t)},
\end{align}
and proceed to estimate $\mathfrak{S}^\iota$, $|\iota|=N$, having just derived \eqref{error.fluid.top.est} for $\mathfrak{V}_I^\iota$: 
\begin{align}\label{frak.fluid.est7}
\notag&e^{2Ht}\frac{\rho^{r_s}}{v_0}\big\|v_I\mathfrak{V}_I^\iota
+\big\{\frac{1}{2}\frac{n}{v_0}e_I(\widehat{\rho^{2r_s}})+n\frac{v_C}{v_0}e_C\widehat{v}_I
+(e_I\widehat{n})v_0+n\gamma_{CDI}\frac{v_D}{v_0}\widehat{v}_C
+n\gamma_{1DI}\frac{\widetilde{v}_1}{v_0}\widehat{v}_D
+\frac{n}{v_0}\widetilde{v}_1^2\widehat{\gamma}_{11I}\\
&+\widehat{n}k_{CI}v_C+\widehat{k}_{CI}v_C+(\widetilde{k}_{CI}+\delta_{CI}H)v_C\big\}Y^\iota\widehat{v}_I\big\|_{L^2(\Sigma_t)}\\
\notag\leq&\,C\sum_{I=1}^3\|e_I\widehat{n}\|_{H^N(\Sigma_t)}+Ce^{-Ht}e^{A_sHt}\sqrt{\mathcal{E}_{tot}(t)}+Ce^{2Ht}\frac{\rho^{r_s}}{v_0}e^{-2Ht}\|\widehat{v}\|_{\dot{H}^N(\Sigma_t)}\\
\notag\leq&\,C\sum_{I=1}^3\|e_I\widehat{n}\|_{H^N(\Sigma_t)}+Ce^{-Ht}e^{A_sHt}\sqrt{\mathcal{E}_{tot}(t)}.
\end{align}
For the error terms in $\mathfrak{P}^\iota$, $|\iota|=N-1,N$, we have:
\begin{align}\label{frak.fluid.est8}
\notag &e^{2Ht}\bigg\|\sum_{\iota_1\cup\iota_2=\iota,\,|\iota_2|<|\iota|}f_{\frac{1-2r_s}{2r_s}}\bigg[
Y^{\iota_1}(f^{-1}_{\frac{1-2r_s}{2r_s}}n\frac{\rho^{2r_s}}{v_0}e_C^a)Y_aY^{\iota_2}\widehat{v}_C-Y^{\iota_1}(f^{-1}_{\frac{1-2r_s}{2r_s}}n\frac{\rho^{2r_s}}{v_0^2}\frac{v_Iv_D}{v_0}e_D^a)Y_aY^{\iota_2}\widehat{v}_I\\
&+Y^{\iota_1}\big\{f^{-1}_{\frac{1-2r_s}{2r_s}}n\big(\frac{1-2r_s}{2r_s}-\frac{1}{2}\frac{\rho^{2r_s}}{v_0^2}\big)\frac{v_C}{v_0}e_C^a\big\}Y_aY^{\iota_2} (\widehat{\rho^{2r_s}})
-2Y^{\iota_1}(f^{-1}_{\frac{1-2r_s}{2r_s}}) HY^{\iota_2}\widehat{\rho^{2r_s}}
\bigg]\bigg\|_{L^2(\Sigma_t)}\\
\notag\leq&\,Ce^{2Ht}e^{-\frac{4r_s}{1-2r_s}Ht}\|\widehat{v}\|_{H^N(\Sigma_t)}
+Ce^{2Ht}(e^{-Ht}+e^{2Ht-\frac{4r_s}{1-2r_s}Ht})\|\widehat{\rho^{2r_s}}\|_{H^N(\Sigma_t)}\qquad\qquad(|\iota_1|\ge1)\\
\notag=&\,Ce^{-Ht}e^{-2A_sHt}e^{Ht}\|\widehat{v}\|_{H^N(\Sigma_t)}
+C(e^{-Ht}+e^{2Ht-\frac{4r_s}{1-2r_s}Ht})e^{2Ht}\|\widehat{\rho^{2r_s}}\|_{H^N(\Sigma_t)}\qquad(A_s=\frac{2r_s}{1-2r_s}-1)\\
\notag\leq&\, C(e^{-Ht}+e^{2Ht-\frac{4r_s}{1-2r_s}Ht})\sqrt{\mathcal{E}_{tot}(t)},
\end{align}
\begin{align}\label{frak.fluid.est9}
\notag&e^{2Ht}\bigg\|\sum_{\iota_1\cup\iota_2=\iota}f_{\frac{1-2r_s}{2r_s}}Y^{\iota_1}(f^{-1}_{\frac{1-2r_s}{2r_s}})Y^{\iota_2}\bigg[
\frac{\rho^{2r_s}}{v_0}\big\{(e_C\widehat{n})v_C
+n\widehat{\gamma}_{CDC}v_D\big\}\\
\notag&-\frac{\rho^{2r_s}}{v_0^2}\frac{v_I}{v_0}
\big\{(e_I\widehat{n})v_0^2+n\gamma_{CDI}\widehat{v}_Cv_D
+n\gamma_{1DI}\widetilde{v}_1\widehat{v}_D+n\widehat{\gamma}_{11I}\widetilde{v}_1^2\big\}\\
&-\frac{1}{2}\frac{\widehat{\rho^{2r_s}}}{v_0^2}\partial_t(\widetilde{\rho}^{2r_s})
+\frac{1}{2}\frac{\widetilde{\rho}^{2r_s}}{v_0^2\widetilde{v}^2_0}\partial_t(\widetilde{\rho}^{2r_s})(v_0+\widetilde{v}_0)\widehat{v}_0
-\widehat{n}\big(\frac{v_Cv_I}{v_0^2}k_{CI}-\mathrm{tr}k\big)\rho^{2r_s}
-\rho^{2r_s}\widehat{n}\\
\notag &+\mathcal{O}(e^{-2Ht})\widehat{\rho^{2r_s}}
-\frac{v_Cv_I}{v_0^2}\widehat{k}_{CI}\rho^{2r_s}
-H\frac{\widehat{\rho^{2r_s}}}{v_0^2}(\rho^{2r_s}+\widetilde{\rho}^{2r_s})
+H\frac{\widetilde{\rho}^{4r_s}}{v_0^2\widetilde{v}_0^2}(v_0+\widetilde{v}_0)\widehat{v}_0\\
\notag&-(\widetilde{k}_{CI}+\delta_{CI}H)
\big(\frac{v_Cv_I}{v_0^2}\widehat{\rho^{2r_s}}+\frac{\widehat{v}_Cv_I}{v_0^2}\widetilde{\rho}^{2r_s}+\frac{\widetilde{v}_C\widehat{v}_I}{v_0^2}\widetilde{\rho}^{2r_s}+\frac{\widetilde{v}_C\widetilde{v}_I}{v_0^2\widetilde{v}_0^2}(v_0+\widetilde{v}_0)\widehat{v}_0\widetilde{\rho}^{2r_s}\big)\bigg]\bigg\|_{L^2(\Sigma_t)}\\
\notag\leq&\,Ce^{2Ht-\frac{4r_s}{1-2r_s}Ht}\sum_{I=1}^3\|e_I\widehat{n}\|_{H^N(\Sigma_t)}
+Ce^{2Ht-\frac{4r_s}{1-2r_s}Ht}\big\{\|\widehat{\gamma}\|_{H^N(\Sigma_t)}
+\|\widehat{k}\|_{H^N(\Sigma_t)}+\|\widehat{n}\|_{H^N(\Sigma_t)}\big\}\\
\notag&+C(e^{2Ht-\frac{4r_s}{1-2r_s}Ht}+e^{4Ht-\frac{8r_s}{1-2r_s}Ht}e^{Ht})\|\widehat{v}\|_{H^{N-2}(\Sigma_t)}
+e^{Ht}\|\widehat{\rho^{2r_s}}\|_{H^N(\Sigma_t)}\\
\notag\leq&\,C\sum_{I=1}^3\|e_I\widehat{n}\|_{H^N(\Sigma_t)}
+Ce^{-Ht}\sqrt{\mathcal{E}_{tot}(t)}
+C(e^{-2A_sHt}+e^{2Ht-\frac{4r_s}{1-2r_s}Ht}e^{-2A_sHt}e^{Ht})\|\widehat{v}\|_{H^{N-2}(\Sigma_t)}\\
\notag\leq&\,C\sum_{I=1}^3\|e_I\widehat{n}\|_{H^N(\Sigma_t)}
+C(e^{-Ht}+e^{2Ht-\frac{4r_s}{1-2r_s}Ht})\sqrt{\mathcal{E}_{tot}(t)}.
\end{align}
Thus, we have controlled the $L^2(\Sigma_t)$ norms of all top order error terms in $\mathfrak{P}^\iota$, $|\iota|=N-1,N$. This completes the proof of the lemma.
\end{proof}
Our goal in this section is to prove the following proposition.
\begin{proposition}\label{prop:fluid.est}
Let $N\ge7$ be sufficiently large and assume the bootstrap assumptions \eqref{Boots} are valid. Then the following energy estimates hold:
\begin{align}
\label{fluid.low.est}\mathcal{E}_{fluid.low}(t)\leq&\, C\mathcal{E}_{fluid.low}(T)+C\int^t_T(e^{-H\tau}+e^{2H\tau-\frac{4r_s}{1-2r_s}H\tau})\mathcal{E}_{tot}(\tau)d\tau,\\
\label{fluid.top.est}\mathcal{E}_{fluid.top}(t)\leq&\, C\mathcal{E}_{fluid.top}(T)+C\int^t_Te^{-H\tau}e^{2A_sH\tau}\mathcal{E}_{tot}(\tau)d\tau
+C\int^t_T(e^{-H\tau}+e^{2H\tau-\frac{4r_s}{1-2r_s}H\tau})\mathcal{E}_{tot}(\tau)d\tau\\
\notag&+\frac{1}{2}\int^t_T\sum_{I=1}^3e^{3H\tau}\|e_I\widehat{n}\|^2_{L^2(\Sigma_\tau)}d\tau,
\end{align}
for all $t\in[T,T_{Boot})$.
\end{proposition}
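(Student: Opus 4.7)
The plan is to prove \eqref{fluid.low.est} and \eqref{fluid.top.est} by weighted $L^2$ energy arguments at each commuted derivative level. In both cases one forms a $\partial_t$-identity from \eqref{vI.hat.eq.diff}, \eqref{rho.hat.eq.diff} (and, at top order, also \eqref{vI.hat.top.eq}), integrates over $\Sigma_t$ using \eqref{IBP} to dispose of divergence terms, controls $\partial_t$ of the symmetrizer weights via Lemma \ref{lem:Linfty.est}, and bounds the error sources using Lemma \ref{lem:frak.fluid.est} together with Young's inequality. The two regimes differ only in how the principal $\widehat v_I$--$\widehat{\rho^{2r_s}}$ coupling is symmetrized.

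For \eqref{fluid.low.est}, the plan is to test \eqref{vI.hat.eq.diff} against $e^{2Ht} Y^\iota \widehat v_I$ for $|\iota|\leq N-1$, and to test the commuted \eqref{rho.hat.eq.diff} against $\tfrac12 e^{\frac{8r_s}{1-2r_s}Ht} f^{-1}_{\frac{1-2r_s}{2r_s}} Y^\iota \widehat{\rho^{2r_s}}$ for $|\iota|\leq N-2$. The linear coefficients $H$ and $2H$ on the left of these two equations are exactly the ones that cancel $\tfrac12 \partial_t$ of the respective exponential weights, so no friction or growth term survives. The skew transport pieces $n\tfrac{v_C}{v_0} e_C Y^\iota(\cdot)$ become $e_C$ divergences modulo $(Y_a e_C^a)$ factors via \eqref{IBP}. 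The only place where one could lose a derivative is in the cross coupling $\tfrac12 \tfrac{n}{v_0} e_I Y^\iota \widehat{\rho^{2r_s}}$ at $|\iota|=N-1$; integrating by parts transfers $e_I$ onto $\widehat v_I$, and the resulting $\|\widehat v\|_{\dot H^N}$ is then borrowed from $\mathcal E_{fluid.top}$. Combining with \eqref{error.fluid.less.top.est} and Young's inequality yields \eqref{fluid.low.est} after time integration.

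For \eqref{fluid.top.est}, the plan is to work directly with the pre-assembled symmetrized identity \eqref{top.fluid.id}, in which the principal coupling, the $v_I v_D e_D$ transport, and the auxiliary \eqref{vI.hat.top.eq} have been combined so that all $N$-th order spatial derivatives either cancel or collapse into pure $e_C$ divergences. After integration over $\Sigma_t$, those divergences become $(Y_a e_C^a)$ lower order contributions via \eqref{IBP}. The quadratic form on the LHS of \eqref{top.fluid.id} is positive definite because $v_I v_I = v_0^2 - \rho^{2r_s}$ forces
\begin{align*}
\rho^{2r_s}|Y^\iota \widehat v|^2 - \tfrac{\rho^{2r_s}}{v_0^2}\, v_I v_E \, Y^\iota \widehat v_I \, Y^\iota \widehat v_E \;=\; \tfrac{\rho^{4r_s}}{v_0^2}\,|Y^\iota \widehat v|^2,
\end{align*}
which together with the $e^{4Ht}$ prefactor controls $e^{(2-2A_s)Ht}\|\widehat v\|^2_{\dot H^N}$, a quantity dominating the $\widehat v$ part $e^{(2-4A_s)Ht}\|\widehat v\|^2_{\dot H^N}$ of $\mathcal E_{fluid.top}$. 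The $\partial_t$ of the symmetrizer weights on the second line of \eqref{top.fluid.id} is estimated by the third and fourth bounds of \eqref{Linfty.est} and contributes $O(e^{-H\tau})$. The $|\iota|=N-1$ piece of $\widehat{\rho^{2r_s}}$ appearing in $\mathcal E_{fluid.top}$ is obtained from a parallel identity on \eqref{rho.hat.eq.diff} weighted by $\tfrac12 e^{4Ht} f^{-1}_{\frac{1-2r_s}{2r_s}} Y^\iota \widehat{\rho^{2r_s}}$, whose $e_C Y^\iota \widehat v_C$ principal term is absorbed into the coercive quadratic form above via Young. The estimates \eqref{error.fluid.top.est}, \eqref{error.fluid.top.est2}, together with Young on the $\|e_I\widehat n\|_{H^N}$ contribution, then recover \eqref{fluid.top.est}, the Young remainder being later absorbed against the gain term $\int \sum_I e^{3H\tau}\|e_I\widehat n\|^2_{H^N}$ on the left of \eqref{geom.est}.

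The central obstacle, reflected in Remark \ref{rem:top.fluid.error}, is the factor $e^{-H\tau}e^{2A_sH\tau}$ on the right of \eqref{fluid.top.est}. Its origin is as follows: the natural symmetrizer for $\widehat v_I$ in \eqref{top.fluid.id} carries a $\rho^{2r_s}$, so the natural weight on $Y^\iota \widehat v_I \, \mathfrak V_I^\iota$ is $e^{4Ht}\rho^{2r_s}\sim e^{(2-2A_s)Ht}$, while the error bound \eqref{error.fluid.top.est} only provides $Ce^{-H\tau}e^{A_sH\tau}\sqrt{\mathcal E_{tot}}$ — one factor of $e^{A_sH\tau}$ is forced on us by the geometric pieces of $\mathfrak V_I^\iota, \mathfrak S^\iota$ that do not share the extreme-tilt decay of $\rho^{2r_s}$. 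Pairing this with $\|\widehat v\|_{\dot H^N}\leq e^{(2A_s-1)H\tau}\sqrt{\mathcal E_{tot}}$, which is dictated by the $e^{-4A_sH\tau}$ weight that $\mathcal E_{fluid.top}$ must use to accommodate the exponential growth of $\widehat v$, produces a further $e^{A_sH\tau}$, giving the integrand $e^{(-1+2A_s)H\tau}\mathcal E_{tot}(\tau)$. Time integrability then requires $2A_s<1$, i.e.\ $c_s^2<\tfrac37$, which is exactly the restriction in Theorem \ref{thm:A}; closing the argument in the full range $c_s^2\in(\tfrac13,1)$ would require improving the decay of the geometric part of $\mathfrak V_I^\iota, \mathfrak S^\iota$, something visible only in the fixed FLRW-background setting of Appendix \ref{app:Euler}.
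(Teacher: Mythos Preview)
Your plan is essentially the paper's: weighted $L^2$ identities from \eqref{vI.hat.eq.diff}, \eqref{rho.hat.eq.diff} for the low-order part, the symmetrized identity \eqref{top.fluid.id} for the top-order part, integration by parts via \eqref{IBP}, Lemma~\ref{lem:Linfty.est} for the symmetrizer time-derivatives, and Lemma~\ref{lem:frak.fluid.est} plus Young for the errors. Two small points where you diverge from the paper are worth flagging.

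First, for the cross term $\tfrac12\tfrac{n}{v_0}e_IY^\iota\widehat{\rho^{2r_s}}$ at $|\iota|=N-1$, the paper does \emph{not} integrate by parts. It simply bounds the term by $Ce^{Ht}\|\widehat v\|_{H^{N-1}}\cdot e^{Ht}\|\widehat{\rho^{2r_s}}\|_{H^N}\le Ce^{-Ht}\mathcal E_{tot}$, using that $e^{2Ht}\|\widehat{\rho^{2r_s}}\|_{\dot H^N}$ already sits in $\mathcal E_{fluid.top}$. Your IBP instead produces $e^{2Ht}\|\widehat v\|_{\dot H^N}\|\widehat{\rho^{2r_s}}\|_{H^{N-1}}\le e^{(2A_s-1)Ht}\mathcal E_{tot}$, which is integrable in the restricted range but does not literally fit the stated integrand $(e^{-H\tau}+e^{2H\tau-\frac{4r_s}{1-2r_s}H\tau})$ of \eqref{fluid.low.est} when $A_s>\tfrac14$. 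The paper's route is both simpler and reproduces the exact statement.

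Second, your displayed identity for the coercive quadratic form is written as an equality, but it is an \emph{inequality}: the paper uses Cauchy--Schwarz, see \eqref{C-S.vIvE}, to get $Y^\iota\widehat v_IY^\iota\widehat v_I-\tfrac{v_Iv_E}{v_0^2}Y^\iota\widehat v_IY^\iota\widehat v_E\ge\tfrac{\rho^{2r_s}}{v_0^2}|Y^\iota\widehat v|^2$. Consequently $e^{4Ht}\rho^{2r_s}\cdot\tfrac{\rho^{2r_s}}{v_0^2}\sim e^{(2-4A_s)Ht}$, not $e^{(2-2A_s)Ht}$; this already equals (not merely dominates) the $\widehat v$-weight in $\mathcal E_{fluid.top}$. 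Also note that the paper controls the $\dot H^{N-1}$ piece of $\widehat{\rho^{2r_s}}$ by a direct transport estimate (the analogue of \eqref{low.rho.en.id} with $P=2$), rather than via Young against the top-order coercive form.
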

\begin{remark}\label{rem:top.fluid.est}
Here, it is evident that we can only hope to close our estimates if $A_s=\frac{3c_s^2-1}{1-c_s^2}<\frac{1}{2}$. This is verified for $c_s^2\in(\frac{1}{3},\frac{3}{7})$. Indeed, combining the energy estimates in Propositions \ref{prop:geom.est}, \ref{prop:fluid.est}, we obtain an overall estimate for the total energy $\mathcal{E}_{tot}(t)$, see the proof of Proposition \ref{prop:cont.arg}. Since the coefficient $e^{-Ht}e^{2A_sHt}$ naturally propagates to the latter overall estimate, Gronwall's inequality only yields an estimate consistent with our bootstrap assumptions \eqref{Boots} in the case where $-1+2A_s<0$.   
\end{remark}
\begin{proof}
{\it Step 1. Derivation of \eqref{fluid.low.est}.} Multiplying the equation \eqref{vI.hat.eq.diff} with $e^{2Ht}Y^\iota \widehat{v}_I$ gives the identity:
\begin{align}\label{low.vI.en.id}
\frac{1}{2}\partial_t(e^{2Ht}Y^\iota\widehat{v}_IY^\iota\widehat{v}_I)
=\frac{1}{2}e^{2Ht}n\frac{v_C}{v_0}e_C(Y^\iota\widehat{v}_IY^\iota\widehat{v}_I)+e^{2Ht}Y^\iota\widehat{v}_I\big\{\frac{1}{2}\frac{n}{v_0}e_IY^\iota(\widehat{\rho^{2r_s}})+\mathfrak{V}_I^\iota\big\}.
\end{align}
Next, we integrate \eqref{low.vI.en.id} in $\Sigma_t$, integrate by parts in $e_C$, sum over $|\iota|\leq N-1$, and use the bootstrap assumptions \eqref{Boots}, the basic estimates \eqref{WN-3.est}, \eqref{rho.hat.boots}, and Lemmas \ref{lem:Linfty.est}, \ref{lem:frak.fluid.est} to deduce the energy inequality:
\begin{align}\label{low.vI.en.ineq}
\notag\frac{1}{2}\partial_t\sum_{I=1}^3e^{2Ht}\|\widehat{v}_I\|_{H^{N-1}(\Sigma_t)}^2
\leq&\, C e^{-Ht}e^{2Ht}\|\widehat{v}\|_{H^{N-1}(\Sigma_t)}^2+
Ce^{Ht}\|\widehat{v}\|_{H^{N-1}(\Sigma_t)}e^{Ht}\|\widehat{\rho^{2r_s}}\|_{H^N(\Sigma_t)}\\
&+Ce^{Ht}\|\widehat{v}\|_{H^{N-1}(\Sigma_t)}(e^{-Ht}+e^{2Ht-\frac{4r_s}{1-2r_s}Ht})\sqrt{\mathcal{E}_{tot}(t)}\\
\notag\leq&\, C(e^{-Ht}+e^{2Ht-\frac{4r_s}{1-2r_s}Ht})\mathcal{E}_{tot}(t).
\end{align}
Similarly, multiplying \eqref{rho.hat.eq.diff} with $e^{2PHt}Y^\iota\widehat{\rho^{2r_s}}$ gives the identity: 
\begin{align}\label{low.rho.en.id}
\notag&\frac{1}{2}\partial_t\big[e^{2PHt}(Y^\iota\widehat{\rho^{2r_s}})^2\big]+\big(2Hf^{-1}_{\frac{1-2r_s}{2r_s}}-P\big)e^{2PHt}(Y^\iota\widehat{\rho^{2r_s}})^2\\
=&-e^{2PHt}f^{-1}_{\frac{1-2r_s}{2r_s}}n\frac{\rho^{2r_s}}{v_0^2}\frac{v_Iv_D}{v_0}Y^\iota(\widehat{\rho^{2r_s}})e_DY^\iota\widehat{v}_I
+e^{2PHt}f^{-1}_{\frac{1-2r_s}{2r_s}}n\frac{\rho^{2r_s}}{v_0}Y^\iota(\widehat{\rho^{2r_s}})e_CY^\iota\widehat{v}_C\\
\notag&+\frac{1}{2}f^{-1}_{\frac{1-2r_s}{2r_s}}e^{2PHt}n\big(\frac{1-2r_s}{2r_s}-\frac{1}{2}\frac{\rho^{2r_s}}{v_0^2}\big)\frac{v_C}{v_0}e_C\big[(Y^\iota \widehat{\rho^{2r_s}})^2\big]+e^{2PHt}f^{-1}_{\frac{1-2r_s}{2r_s}}Y^\iota(\widehat{\rho^{2r_s}})\mathfrak{P}^\iota.
\end{align}
Setting $P=\frac{4r_s}{1-2r_s}$, integrating \eqref{low.rho.en.id} in $\Sigma_t$, summing over $|\iota|\leq N-2$, integrating by parts in $e_C$ in the first term in the third line \eqref{low.rho.en.id}, and arguing as above we derive:
\begin{align}\label{low.rho.en.ineq}
\notag\frac{1}{2}\partial_t\big[e^{\frac{8r_s}{1-2r_s}Ht}\|\widehat{\rho^{2r_s}}\|^2_{H^{N-2}(\Sigma_t)}\big]\leq&\, C(e^{-Ht}+e^{2Ht-\frac{4r_s}{1-2r_s}Ht})e^{\frac{8r_s}{1-2r_s}Ht}\|\widehat{\rho^{2r_s}}\|^2_{H^{N-2}(\Sigma_t)}\\
&+e^{\frac{4r_s}{1-2r_s}Ht}\|\widehat{\rho^{2r_s}}\|_{H^{N-2}(\Sigma_t)}\|\widehat{v}\|_{H^{N-1}(\Sigma_t)}\\
\notag\leq&\,C(e^{-Ht}+e^{2Ht-\frac{4r_s}{1-2r_s}Ht})\mathcal{E}_{tot}(t).
\end{align}
Similarly, for $P=2$ and summing over $|\iota|=N-1$ instead, we have
\begin{align}\label{low.rho.en.ineq2}
\notag\frac{1}{2}\partial_t\big[e^{4Ht}\|\widehat{\rho^{2r_s}}\|^2_{\dot{H}^{N-1}(\Sigma_t)}\big]\leq&\, C(e^{-Ht}+e^{2Ht-\frac{4r_s}{1-2r_s}Ht})e^{4Ht}\|\widehat{\rho^{2r_s}}\|^2_{\dot{H}^{N-1}(\Sigma_t)}\\
&+e^{4Ht-\frac{4r_s}{1-2r_s}Ht}\|\widehat{\rho^{2r_s}}\|_{\dot{H}^{N-1}(\Sigma_t)}\|\widehat{v}\|_{\dot{H}^N(\Sigma_t)}\\
\notag\leq&\,C(e^{-Ht}+e^{2Ht-\frac{4r_s}{1-2r_s}Ht})\mathcal{E}_{tot}(t).
\qquad (4-\frac{4r_s}{1-2r_s}=2-2A_s)
\end{align}
From \eqref{v0.hat.id.diff}, we also have 
\begin{align}\label{v0.en.ineq}
\|\widehat{v}_0\|^2_{H^{N-m}(\Sigma_t)}\leq C\varepsilon \|\widehat{v}_0\|^2_{H^{N-m}(\Sigma_t)}+C\sum_{I=1}^3\|\widehat{v}_I\|_{H^{N-m}(\Sigma_t)}^2+Ce^{4Ht}\|\widehat{\rho^{2r_s}}\|_{H^{N-m}(\Sigma_t)}^2,\qquad m=0,1.
\end{align}
For $\varepsilon>0$ sufficiently small, we obtain the lower and top order bounds
\begin{align}\label{v0.en.ineq}
\begin{split}
e^{2Ht}\|\widehat{v}_0\|^2_{H^{N-1}(\Sigma_t)}\leq&\, C\sum_{I=1}^3e^{2Ht}\|\widehat{v}_I\|_{H^{N-1}(\Sigma_t)}^2+Ce^{4Ht}\|\widehat{\rho^{2r_s}}\|_{H^{N-1}(\Sigma_t)}^2,\\
e^{-4A_sHt}e^{2Ht}\|\widehat{v}_0\|^2_{H^{N}(\Sigma_t)}\leq&\, C\sum_{I=1}^3e^{-4A_sHt}e^{2Ht}\|\widehat{v}_I\|_{H^N(\Sigma_t)}^2+Ce^{-4A_sHt}e^{4Ht}\|\widehat{\rho^{2r_s}}\|_{H^N(\Sigma_t)}^2.
\end{split}
\end{align}
The desired lower order energy inequality \eqref{fluid.low.est} now follows by
summing the energy inequalities \eqref{low.vI.en.ineq}, \eqref{low.rho.en.ineq}, \eqref{low.rho.en.ineq2}, integrating in $[T,t]$, and using the first bound in \eqref{v0.en.ineq}.

\vspace*{.05in}

{\it Step 2. Derivation of \eqref{fluid.top.est}.} Integrate the top order energy identity \eqref{top.fluid.id} in $\Sigma_t$, integrate by parts the terms in the third line of \eqref{top.fluid.id}, sum over $|\iota|=N$, and use the bootstrap assumptions \eqref{Boots}, the basic estimates \eqref{WN-3.est}, \eqref{rho.hat.boots}, Lemmas \ref{lem:Linfty.est}, \ref{lem:frak.fluid.est}, and Young's inequality to deduce the energy inequality:
\begin{align}\label{top.fluid.en.ineq}
\notag&\frac{1}{2}\partial_t\bigg[\sum_{|\iota|=N}\int_{\Sigma_t}e^{4Ht}\rho^{2r_s}\big\{Y^\iota\widehat{v}_IY^\iota\widehat{v}_I-\frac{v_Iv_E}{v_0^2}Y^\iota \widehat{v}_IY^\iota\widehat{v}_E\big\} d\omega+\frac{1}{2}f_{\frac{1-2r_s}{2r_s}}e^{4Ht}\|\widehat{\rho^{2r_s}}\|^2_{\dot{H}^N(\Sigma_t)}\bigg]\\
&+\frac{4r_s-1}{2r_s}He^{4Ht}\|\widehat{\rho^{2r_s}}\|_{\dot{H}^N(\Sigma_t)}^2+\frac{4r_s-1}{1-2r_s}H\sum_{|\iota|=N}\int_{\Sigma_t}e^{4Ht}\rho^{2r_s}\big\{Y^\iota\widehat{v}_IY^\iota\widehat{v}_I-\frac{v_Iv_E}{v_0^2}Y^\iota \widehat{v}_IY^\iota\widehat{v}_E\big\} d\omega\\
\notag\leq&\, C(e^{-Ht}+e^{2Ht-\frac{4r_s}{1-2r_s}Ht})e^{4Ht}\|\widehat{\rho^{2r_s}}\|_{\dot{H}^N(\Sigma_t)}^2+Ce^{2Ht}\|\widehat{\rho^{2r_s}}\|_{\dot{H}^N(\Sigma_t)}e^{2Ht-\frac{4r_s}{1-2r_s}Ht}\|v\|_{\dot{H}^N(\Sigma_t)}\\
\notag&+Ce^{-Ht}e^{4Ht-\frac{4r_s}{1-2r_s}Ht}\|\widehat{v}\|^2_{\dot{H}^N(\Sigma_t)}+Ce^{2Ht-\frac{2r_s}{1-2r_s}Ht}\|\widehat{v}\|_{\dot{H}^N(\Sigma_t)}\big\{e^{-Ht}e^{A_sHt}\sqrt{\mathcal{E}_{tot}(t)}+C\sum_{I=1}^3\|e_I\widehat{n}\|_{H^N(\Sigma_t)}\big\}\\
\notag&+Ce^{2Ht}\|\widehat{\rho^{2r_s}}\|_{\dot{H}^N(\Sigma_t)}\big\{(e^{-Ht}+e^{2Ht-\frac{4r_s}{1-2r_s}Ht})\sqrt{\mathcal{E}_{tot}(t)}+\sum_{I=1}^3\|e_I\widehat{n}\|_{H^N(\Sigma_t)}\big\}\\
\notag\leq&\,\frac{C}{\eta}\big(e^{-Ht}+e^{2Ht-\frac{4r_s}{1-2r_s}Ht}+e^{-Ht}e^{2A_sHt}\big)\mathcal{E}_{tot}(t)
+\eta\sum_{I=1}^3e^{3Ht}\|e_I\widehat{n}\|^2_{H^N(\Sigma_t)}\qquad\qquad (2-\frac{4r_s}{1-2r_s}=-2A_s)
\end{align}
for some constant $\eta>0$ of our choice. Notice that by Cauchy-Schwartz it holds
\begin{align}\label{C-S.vIvE}
Y^\iota\widehat{v}_IY^\iota\widehat{v}_I-\frac{v_Iv_E}{v_0^2}Y^\iota \widehat{v}_IY^\iota\widehat{v}_E\ge (1-\frac{v_Ev_E}{v_0^2})Y^\iota\widehat{v}_IY^\iota\widehat{v}_I=\frac{\rho^{2r_s}}{v_0^2}Y^\iota\widehat{v}_IY^\iota\widehat{v}_I.
\end{align}
Also, we recall that by Lemma \ref{lem:Linfty.est}, $\frac{\rho^{2r_s}}{v_0^2}\sim e^{2Ht-\frac{4r_s}{1-2r_s}Ht}=e^{-2A_sHt}$, $e^{2Ht}\rho^{2r_s}\sim e^{-2A_sHt}$. Hence, integrating \eqref{top.fluid.en.ineq} in $[T,t]$, it follows that 
\begin{align}\label{top.fluid.en.ineq2}
&\sum_{I=1}^3e^{-4A_sHt}e^{2Ht}\|\widehat{v}_I\|^2_{\dot{H}^N(\Sigma_t)}+e^{4Ht}\|\widehat{\rho^{2r_s}}\|^2_{\dot{H}^N(\Sigma_t)}\\
\notag\leq&\,C\mathcal{E}_{fluid.top}(T)+\frac{C}{\eta}\int^t_T\big(e^{-Ht}+e^{2Ht-\frac{4r_s}{1-2r_s}Ht}+e^{-Ht}e^{2A_sHt}\big)\mathcal{E}_{tot}(t)
+\eta\sum_{I=1}^3e^{3Ht}\|e_I\widehat{n}\|^2_{H^N(\Sigma_t)}.
\end{align}
Using in addition the second bound in \eqref{v0.en.ineq} and choosing $\eta$ sufficiently small, we conclude the desired top order energy estimate \eqref{fluid.top.est}.
\end{proof}

\subsection{Global existence in all of $[T,+\infty)\times\mathbb{S}^3$}

Having derived the main energy estimates in Propositions \ref{prop:geom.est}, \ref{prop:fluid.est}, we can now complete the overall bootstrap argument by improving our assumptions \eqref{Boots} and obtain a global stability result.  
\begin{proposition}\label{prop:cont.arg}
Let $N\ge7$ and assume the bootstrap assumptions \eqref{Boots} are valid. Then
the following energy estimate holds:
\begin{align}\label{tot.en.est}
\mathcal{E}_{tot}(t)\leq C\mathcal{E}_{tot}(T)+C\int_T^t(e^{-\frac{1}{2}H\tau}+e^{2H\tau-\frac{4r_s}{1-2r_s}H\tau})\mathcal{E}_{tot}(\tau)d\tau+
C\int_T^te^{-H\tau}e^{2A_sHt}\mathcal{E}_{tot}(\tau)d\tau,
\end{align}
for all $t\in[T,T_{Boot})$. Moreover, if $2A_s<1$, ie. $c_s^2\in(\frac{1}{3},\frac{3}{7})$, and $\mathcal{E}_{tot}(T)$ is sufficiently small such that 
\begin{align}\label{tot.en.init}
C\mathcal{E}_{tot}(T)\exp\{C\int^{+\infty}_Te^{-\frac{1}{2}H\tau}+e^{2H\tau-\frac{4r_s}{1-2r_s}H\tau}+e^{-H\tau}e^{2A_sH\tau}d\tau\}<\varepsilon,
\end{align}
then \eqref{tot.en.est} yields a strict improvement of the bootstrap assumptions \eqref{Boots}. In the latter case, $T_{Boot}=+\infty$ and the estimate
\begin{align}\label{tot.en.est2}
\mathcal{E}_{tot}(t)<\varepsilon
\end{align}
holds for all $t\in[T,+\infty)$.
\end{proposition}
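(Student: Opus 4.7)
The plan is to derive \eqref{tot.en.est} by simply summing the three energy estimates \eqref{geom.est}, \eqref{fluid.low.est}, and \eqref{fluid.top.est} from Propositions \ref{prop:geom.est} and \ref{prop:fluid.est}. The only subtlety is the handling of the term $\frac{1}{2}\int_T^t \sum_{I=1}^3 e^{3H\tau}\|e_I \widehat{n}\|_{L^2(\Sigma_\tau)}^2\, d\tau$ appearing on the right-hand side of \eqref{fluid.top.est}: this is bounded above by the coercive positive term $\int_T^t \sum_{C=1}^3 e^{3H\tau}\|e_C\widehat{n}\|_{H^N(\Sigma_\tau)}^2\, d\tau$ present on the left-hand side of \eqref{geom.est} (since the $L^2$ norm is dominated by the $H^N$ norm), and so can be absorbed after the summation. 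Dropping the remaining positive contribution from the resulting left-hand side then yields the clean Gronwall-type inequality \eqref{tot.en.est}.

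Next I would apply Gronwall's inequality to \eqref{tot.en.est}. The three coefficients appearing in the integrals on the right-hand side must be integrable over $[T,+\infty)$: the first, $e^{-\frac{1}{2}H\tau}$, is trivially integrable; the second, $e^{(2-\frac{4r_s}{1-2r_s})H\tau}$, is integrable since $\frac{4r_s}{1-2r_s}>2$ for $r_s>1/4$ (which holds for all $c_s^2>1/3$); and the third, $e^{(2A_s-1)H\tau}$, is integrable precisely when $2A_s<1$, i.e., when $c_s^2<3/7$. This is the only place the restriction $c_s^2\in(\frac{1}{3},\frac{3}{7})$ is genuinely used. Gronwall then yields, for all $t\in[T,T_{Boot})$,
\begin{equation*}
\mathcal{E}_{tot}(t)\leq C\mathcal{E}_{tot}(T)\exp\!\left\{C\int_T^{+\infty}\bigl[e^{-\frac{1}{2}H\tau}+e^{(2-\frac{4r_s}{1-2r_s})H\tau}+e^{(2A_s-1)H\tau}\bigr]d\tau\right\},
\end{equation*}
and under the smallness hypothesis \eqref{tot.en.init} the right-hand side is strictly less than $\varepsilon$, providing the strict improvement of the bootstrap assumption \eqref{Boots}.

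The remaining step is a standard continuation argument. Local well-posedness for the reduced Einstein--Euler system in the gauge \eqref{metric}--\eqref{v.rs}, \eqref{parab.gauge} (where \eqref{parab.gauge} becomes a forward-parabolic equation for $n$) ensures that if $T_{Boot}<+\infty$ then the solution, together with $\mathcal{E}_{tot}$, extends continuously past $T_{Boot}$. Combined with the strict inequality $\mathcal{E}_{tot}(t)<\varepsilon$ on $[T,T_{Boot})$ just established, this would contradict the maximality of $T_{Boot}$. Hence $T_{Boot}=+\infty$ and \eqref{tot.en.est2} holds on $[T,+\infty)$.

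The essential difficulty is not in this continuation step itself but in the degenerate factor $e^{2A_sH\tau}$ already isolated inside the Gronwall integrand, whose origin is discussed in Remarks \ref{rem:top.fluid.error} and \ref{rem:top.fluid.est}: it reflects the ratio $v_0^2/\rho^{2r_s}\sim e^{2A_sHt}$ produced by extreme tilt and is incompatible with the generic $e^{-H\tau}$ decay of the top-order error coefficient unless $2A_s<1$. Thus once the delicate top-order estimate \eqref{fluid.top.est} has been obtained, the present proposition is reduced to routine bookkeeping: sum the three estimates, absorb the $e_I\widehat{n}$ term, and invoke Gronwall together with local well-posedness.
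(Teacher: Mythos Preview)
Your proposal is correct and follows essentially the same approach as the paper: sum the three estimates \eqref{geom.est}, \eqref{fluid.low.est}, \eqref{fluid.top.est}, absorb the $\frac{1}{2}\int_T^t\sum_I e^{3H\tau}\|e_I\widehat n\|_{L^2}^2\,d\tau$ term into the stronger $H^N$ term on the left of \eqref{geom.est}, apply Gronwall, and conclude via a standard continuation argument. The paper compresses the absorption step into the phrase ``readjusting the constant $C$'', but your explicit identification of that step and of the integrability conditions on the three coefficients is exactly the content of the proof.
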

\begin{proof}
Summing the energy estimates \eqref{geom.est}, \eqref{fluid.low.est}, \eqref{fluid.top.est} and readjusting the constant $C$ gives \eqref{tot.en.est}. By Gronwall's inequality, we obtain 
\begin{align}\label{tot.en.est3}
\mathcal{E}_{tot}(t)\leq C\mathcal{E}_{tot}(T)\exp\{C\int^t_Te^{-\frac{1}{2}H\tau}+e^{2H\tau-\frac{4r_s}{1-2r_s}H\tau}+e^{-H\tau}e^{2A_sH\tau}d\tau\},
\end{align}
for all $t\in[T,T_{boot})$. 

Standard local well-posedness implies that the perturbed solution has a maximal time of existence $T_{max}>T$. If $T_{max}<+\infty$, then for initial data sufficiently close to the background solution on $t=T$, consistent with \eqref{tot.en.init}, there must exist a time $t_0\in[T,T_{max}]$, such that $\sup_{t\in[T,t_0)}\mathcal{E}_{tot}(t)=\varepsilon$, otherwise the solution can be further extended past $t=T_{max}$ by standard continuation criteria. The latter equality cannot be true due to \eqref{tot.en.est3} and \eqref{tot.en.init}. We conclude that $T_{max}=+\infty$ and that the bootstrap assumptions \eqref{Boots} are never saturated, that is, \eqref{tot.en.est2} holds for all $t\in[T,+\infty)$.
\end{proof}
\section{The precise asymptotic behavior towards infinity}\label{sec:refined.est}

In this section, we derive the precise asymptotic behavior of each of the reduced variables $\widehat{k}_{IJ},\widehat{\gamma}_{IJB},\widehat{e}_I^i,\widehat{n}$, $\widehat{\rho^{2r_s}},\widehat{v}_0,\widehat{v}_I$ using the overall energy estimate \eqref{tot.en.est3} and the equations in Lemma \ref{lem:red.eq2}. In particular, we prove:
\begin{proposition}\label{prop:asym.beh}
Let $N\ge7$ and assume Proposition \ref{prop:cont.arg} holds. Then the reduced variables $\widehat{k}_{IJ},\widehat{n}$ satisfy the improved estimates: 
\begin{align}\label{k.n.hat.imp.est}
\|\widehat{k}_{IJ}\|_{W^{N-4,\infty}(\Sigma_t)}+\|\widehat{n}\|_{W^{N-4,\infty}(\Sigma_t)}\leq C\varepsilon e^{-2Ht}.
\end{align}
Moreover, there exist functions $(\widehat{e}^\infty)_I^i\in W^{N-4,\infty}(\mathbb{S}^3)$, $\widehat{\rho^{2r_s}}^\infty,\widehat{v}_0^\infty,\widehat{v}_I^\infty\in W^{N-5,\infty}(\mathbb{S}^3)$ such that 
\begin{align}
\label{e.hat.infty}\|\widehat{e}^i_I-(\widehat{e}^\infty)^i_I(\omega)e^{-Ht}\|_{W^{N-4,\infty}(\Sigma_t)}\leq &\, C\varepsilon e^{-3Ht},\\
\label{v.hat.infty}\|\widehat{v}_\mu-\widehat{v}_\mu^\infty(\omega)e^{-Ht}\|_{W^{N-5,\infty}(\Sigma_t)}\leq &\,C\varepsilon e^{-2Ht},\\
\label{rho.hat.infty}\|\widehat{\rho^{2r_s}}-\widehat{\rho^{2r_s}}^\infty(\omega)e^{-\frac{4r_s}{1-2r_s}Ht}\|_{W^{N-5,\infty}(\Sigma_t)}\leq &\,C\varepsilon(e^{-Ht}+e^{2Ht-\frac{4r_s}{1-2r_s}Ht})e^{-\frac{4r_s}{1-2r_s}Ht},
\end{align}
for all $t\in[T,+\infty)$, where we recall that $\frac{4r_s}{1-2r_s}>2$, $r_s>\frac{1}{4}$.
\end{proposition}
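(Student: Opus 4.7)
My plan is to run an a posteriori refinement of the global energy bound \eqref{tot.en.est2}. First, by the Sobolev embedding $H^{s+2}(\mathbb{S}^3)\hookrightarrow W^{s,\infty}(\mathbb{S}^3)$, the energy bound alone already delivers the baseline $L^\infty$ decay rates \eqref{WN-3.est}--\eqref{rho.hat.boots} for every $t\in[T,+\infty)$. I would then feed these back into the evolution equations of Lemma \ref{lem:red.hat.eq} and integrate each one as a linear ODE in $t$, pointwise in $\omega\in\mathbb{S}^3$, for a suitable rescaling of each variable.

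\textbf{Improving $\widehat{n}$ and $\widehat{k}$.} The first step is to rewrite \eqref{n.hat.eq} as $(\partial_t+2H)\widehat{n}-e_Ce_C\widehat{n}=\mathfrak{F}$ and check, term by term via the baseline bounds, that $\|\mathfrak{F}\|_{W^{N-4,\infty}(\Sigma_t)}\le C\varepsilon e^{-2Ht}$. A standard Duhamel argument for the heat semigroup on $\mathbb{S}^3$ with dissipation $+2H$ (or equivalently a weighted maximum principle) then yields $\|\widehat{n}\|_{W^{N-4,\infty}}\le C\varepsilon e^{-2Ht}$. Armed with this improved $\widehat{n}$, equation \eqref{k.hat.eq} becomes purely transport-type, $(\partial_t+3H)\widehat{k}_{IJ}=\mathfrak{R}_{IJ}$ with $\|\mathfrak{R}_{IJ}\|_{W^{N-4,\infty}}\le C\varepsilon e^{-2Ht}$, and a direct integration of the linear ODE produces \eqref{k.n.hat.imp.est}.

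\textbf{Extracting the asymptotic profiles.} For $\widehat{e}^i_I$, once $\widehat{n}$ and $\widehat{k}$ are improved, \eqref{eIi.hat.eq} reads $(\partial_t+H)\widehat{e}^i_I=O(\varepsilon e^{-3Ht})$ in $W^{N-4,\infty}$, so setting $\phi^i_I:=e^{Ht}\widehat{e}^i_I$ gives $\|\partial_t\phi^i_I\|_{W^{N-4,\infty}}\le C\varepsilon e^{-2Ht}$, which is integrable in $t$; hence $\phi^i_I$ is Cauchy as $t\to\infty$, its limit $(\widehat{e}^\infty)^i_I\in W^{N-4,\infty}(\mathbb{S}^3)$ exists, and the rate \eqref{e.hat.infty} follows. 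For $\widehat{\rho^{2r_s}}$, divide \eqref{rho.hat.eq} by the coefficient $f_{\frac{1-2r_s}{2r_s}}$ to bring it to the form $(\partial_t+\tfrac{4r_s}{1-2r_s}H)\widehat{\rho^{2r_s}}=\mathcal{Q}$, control $\|\mathcal{Q}\|_{W^{N-5,\infty}}$ by the combined decay $C\varepsilon(e^{-Ht}+e^{2Ht-\frac{4r_s}{1-2r_s}Ht})e^{-\frac{4r_s}{1-2r_s}Ht}$ via \eqref{WN-3.est}--\eqref{rho.hat.boots} and Lemma \ref{lem:Linfty.est}, and integrate the rescaling $e^{\frac{4r_s}{1-2r_s}Ht}\widehat{\rho^{2r_s}}$ to recover \eqref{rho.hat.infty}. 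The variable $\widehat{v}_I$ is the delicate one: I would recast \eqref{vI.hat.eq} as a transport equation $(\partial_t-n\tfrac{v_C}{v_0}e_C)(e^{Ht}\widehat{v}_I)=e^{Ht}R_I$ along the smooth, bounded, $\Sigma_t$-tangent vector field $n\tfrac{v_C}{v_0}e_C$ on $\mathbb{S}^3$, integrate along its characteristics, and use that $\|e^{Ht}R_I\|_{W^{N-5,\infty}}$ is integrable in $t$ --- where one crucially invokes the freshly-proved asymptotics for $\widehat{\rho^{2r_s}}$ to handle the contribution of $\tfrac{n}{v_0}e_I\widehat{\rho^{2r_s}}$ --- to produce the limit $\widehat{v}_I^\infty\in W^{N-5,\infty}(\mathbb{S}^3)$ and the rate \eqref{v.hat.infty}. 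Finally, $\widehat{v}_0$ is read off from the algebraic identity \eqref{v0.hat.id.diff}, inheriting the asymptotics of $\widehat{v}_C$ and $\widehat{\rho^{2r_s}}$.

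\textbf{Main obstacle.} The nontrivial step is the $\widehat{v}_I$ analysis: the transport coefficient $n\tfrac{v_C}{v_0}$ is only $O(1)$ --- a direct manifestation of the asymptotic nullity of $u$ that we are trying to propagate --- so it cannot be absorbed into the dissipation $+H$ and a pointwise Gronwall estimate along the characteristic flow on $\mathbb{S}^3$ is required instead of a naive time integration. Getting the stated rate in \eqref{v.hat.infty} also requires combining the lower-order decay $\|\widehat{\rho^{2r_s}}\|_{H^{N-2}}\lesssim e^{-\frac{4r_s}{1-2r_s}Ht}$ with the top-order decay $\|\widehat{\rho^{2r_s}}\|_{\dot H^{N-1}}\lesssim e^{-2Ht}$ at the appropriate spots, together with the algebraic relations in \eqref{key.param}. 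The restriction $c_s^2\in(\tfrac13,\tfrac37)$ reappears here for the same reason as in Proposition \ref{prop:fluid.est}: it is exactly what makes the exponents $1+2A_s$ and $\tfrac{4r_s}{1-2r_s}-2$ land on the right side of zero so that the integrals producing the limit profiles converge at the asserted rates.
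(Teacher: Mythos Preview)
Your overall scheme is sound, but it is considerably more elaborate than the paper's, and your ``main obstacle'' is in fact illusory. The point you are missing is that the spatial frame carries built-in decay: $e_C=e^i_CY_i$ with $e^i_C=\widetilde{e}^i_C+\widehat{e}^i_C=O(e^{-Ht})$ by \eqref{red.var.hom} and \eqref{WN-3.est}. Hence the full transport operator $n\tfrac{v_C}{v_0}e_C$ is $O(e^{-Ht})$ as an operator $W^{N-4,\infty}\to W^{N-5,\infty}$, not $O(1)$; the extreme tilt makes $v_C/v_0$ of order one, but the exponentially expanding geometry makes $e_C$ small. The same mechanism controls $e_Ce_C\widehat{n}$ in \eqref{n.hat.eq} (it is $O(e^{-2Ht}\|\widehat{n}\|_{W^{N-2,\infty}})$) and $\tfrac{n}{2v_0}e_I\widehat{\rho^{2r_s}}$ in \eqref{vI.hat.eq} (the $e^{Ht}$ from $v_0^{-1}$ is cancelled by the $e^{-Ht}$ from $e^i_I$).

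The paper therefore dispenses with both the heat semigroup for $\widehat{n}$ and the characteristic integration for $\widehat{v}_I$: for each variable one simply bounds the \emph{entire} right-hand side of the relevant equation in Lemma~\ref{lem:red.hat.eq} in the stated $W^{k,\infty}$ norm, using only Sobolev embedding of \eqref{tot.en.est2} together with \eqref{WN-3.est}--\eqref{rho.hat.boots}, to get $\|\partial_t(\text{rescaled variable})\|_{W^{k,\infty}}\le C\varepsilon\,e^{-\delta Ht}$ with $\delta>0$, and then integrates this as a pure ODE in $t$, pointwise in $\omega$. In particular, the three bounds for $\widehat{e}^i_I$, $\widehat{v}_I$, $\widehat{\rho^{2r_s}}$ are obtained simultaneously (cf.\ \eqref{e.v.rho.hat.ode.est}), so your sequential refinement --- first $\widehat{\rho^{2r_s}}$, then feed it into $\widehat{v}_I$ --- is not needed. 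Your treatment of $\widehat{v}_0$ via \eqref{v0.hat.id.diff} matches the paper. Finally, the restriction $c_s^2<\tfrac37$ enters here only through the standing hypothesis that Proposition~\ref{prop:cont.arg} holds; the time-ODE integrations in this proposition are themselves insensitive to where $c_s^2$ lies in $(\tfrac13,1)$.
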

\begin{proof}
The overall energy estimate \eqref{tot.en.est3}, together with the basic estimates \eqref{WN-3.est}, \eqref{rho.hat.boots}, applied to control the RHS of \eqref{k.hat.eq} yield the bound
\begin{align}\label{k.hat.ode.est}
\|\partial_t\widehat{k}_{IJ}+3H\widehat{k}_{IJ}\|_{W^{N-4,\infty}(\Sigma_t)}\leq C\varepsilon e^{-2Ht},
\end{align}
which in turn implies 
\begin{align}\label{k.hat.ode.est2}
\notag&\|e^{3Ht}\widehat{k}_{IJ}-e^{3HT}\widehat{k}_{IJ}(T,\omega)\|_{W^{N-4,\infty}(\Sigma_t)}\\
=&\,
\bigg\|\int^t_T\partial_t(e^{3H\tau}\widehat{k}_{IJ})d\tau\bigg\|_{W^{N-4,\infty}(\Sigma_t)}
\leq \int^t_T\|\partial_t(e^{3H\tau}\widehat{k}_{IJ}\|_{W^{N-4,\infty}(\Sigma_\tau)}d\tau
\leq C\varepsilon\int^t_Te^{H\tau}d\tau\leq C\varepsilon e^{Ht}.
\end{align}
Multiplying \eqref{k.hat.ode.est2} with $e^{-3Ht}$ and using the triangle inequality, we obtain
\begin{align}\label{k.hat.ode.est3}
\|\widehat{k}_{IJ}\|_{W^{N-4,\infty}(\Sigma_t)}\leq \|\widehat{k}_{IJ}\|_{W^{N-4,\infty}(\Sigma_T)}e^{-3Ht}+ C\varepsilon e^{-2Ht}\leq C\varepsilon e^{-2Ht}.
\end{align}
The argument for $\widehat{n}$ is similar. This completes the proof of \eqref{k.n.hat.imp.est}. 

Having derived the improved estimates for $\widehat{k}_{IJ},\widehat{n}$, we proceed to derive the precise asymptotic behaviors of $\widehat{e}^i_I,\widehat{v}_\mu,\widehat{\rho^{2r_s}}$. Treating the equations \eqref{eIi.hat.eq}, \eqref{vI.hat.eq}, \eqref{rho.hat.eq} as ODEs in $t$, using the energy estimate \eqref{tot.en.est2} to control the corresponding RHSs, we deduce the bounds:
\begin{align}\label{e.v.rho.hat.ode.est}
\notag \|\partial_t(e^{Ht}\widehat{e}_I^i)\|_{W^{N-5}(\Sigma_t)}\leq&\, C\varepsilon e^{-2Ht},\\
\|\partial_t(e^{Ht}\widehat{v}_I)\|_{W^{N-5}(\Sigma_t)}\leq&\, C\varepsilon e^{-Ht},\\
\notag \|\partial_t(e^{\frac{4r_s}{1-2r_s}Ht}\widehat{\rho^{2r_s}})\|_{W^{N-5}(\Sigma_t)}\leq &\,C\varepsilon(e^{-Ht}+e^{2Ht-\frac{4r_s}{1-2r_s}Ht}),
\end{align}
for all $t\in[T,+\infty)$. Since the latter RHSs are integrable in $[T,+\infty)$, it follows that the renormalized quantities $e^{Ht}\widehat{e}_I^i,e^{Ht}\widehat{v}_I,e^{\frac{4r_s}{1-2r_s}Ht}\widehat{\rho^{2r_s}}$ have limits as $t\rightarrow+\infty$, denoted by $(\widehat{e}^\infty)^i_I(\omega)\in W^{N-4,\infty}(\mathbb{S}^3)$, $\widehat{v}^\infty_I(\omega),\widehat{\rho^{2r_s}}^\infty(\omega)\in W^{N-5,\infty}(\Sigma_t)$. Integrating \eqref{e.v.rho.hat.ode.est} in $[t,+\infty)$ gives the desired asymptotic behaviors \eqref{e.hat.infty}-\eqref{rho.hat.infty} for $\widehat{e}_I^i,\widehat{v}_I,\widehat{\rho^{2r_s}}$. Indeed, the argument for $\widehat{e}_I^i$ is as follows:
\begin{align}\label{e.hat.infty2}
\notag\|\widehat{e}_I^i-(\widehat{e}^\infty)_I^i(\omega)e^{-Ht}\|_{W^{N-4,\infty}(\Sigma_t)}=&\,e^{-Ht}\bigg\|\int^{+\infty}_t\partial_\tau(e^{H\tau}\widehat{e}_I^i)d\tau\bigg\|_{W^{N-4,\infty}(\Sigma_\tau)}\\
\leq&\, e^{-Ht}\int^{+\infty}_t\|\partial_\tau(e^{H\tau}\widehat{e}_I^i)\|_{W^{N-4,\infty}(\Sigma_\tau)}d\tau\\
\notag\leq&\, e^{-Ht}\int_t^{+\infty}C\varepsilon e^{-2H\tau}d\tau\leq C\varepsilon e^{-3Ht}.
\end{align}
The derivations for $\widehat{v}_I,\widehat{\rho^{2r_s}}$ are similar. To conclude the asymptotic behavior for $\widehat{v}_0$, we use the identity \eqref{v0.hat.id.diff} and the already derived behaviors of $\widehat{v}_I,\widehat{\rho^{2r_s}}$.
\end{proof}

\appendix

\section{Derivation of the equations of motion}\label{app:divT}

\begin{lemma}\label{lem:eq.motion}
The equations of motion ${\bf D}^\mu T_{\mu\nu}=0$, for $T_{\mu\nu}$ given by \eqref{Tmunu} with $p=c_s^2\rho$, are equivalent to the system \eqref{divT.v}-\eqref{divT.rho}.
\end{lemma}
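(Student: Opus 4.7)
The plan is to expand ${\bf D}^\mu T_{\mu\nu}=0$ directly, split it into its components parallel and orthogonal to $u$, and then translate each component into the $v$-variable using only the chain rule and the algebraic identity $r_s(1+c_s^2)=c_s^2$, equivalently $1-r_s(1+c_s^2)=1-c_s^2$. The main obstacle is purely bookkeeping: making sure the weights $\rho^{r_s}$ distribute correctly.

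First I would substitute $p=c_s^2\rho$ into \eqref{Tmunu} and compute
\[
{\bf D}^\mu T_{\mu\nu}=(1+c_s^2)\,{\bf D}^\mu(\rho u_\mu)\,u_\nu+(1+c_s^2)\rho\,({\bf D}_u u)_\nu+c_s^2\,{\bf D}_\nu\rho=0.
\]
Contracting with $u^\nu$ and using $u^\nu u_\nu=-1$ and $u^\nu {\bf D}_\mu u_\nu=0$ yields the scalar equation
\[
u^\mu{\bf D}_\mu\log\rho+(1+c_s^2){\bf D}^\mu u_\mu=0,
\]
while re-inserting the scalar relation back into the full equation gives the ``spatial'' equation
\[
\rho({\bf D}_u u)_\nu+r_s\bigl({\bf D}_\nu\rho+u_\nu u^\lambda{\bf D}_\lambda\rho\bigr)=0.
\]

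Next I would translate to the renormalized variable $v_\mu=\rho^{r_s}u_\mu$. Differentiating, one gets
\[
{\bf D}^\mu v_\mu=\rho^{r_s}{\bf D}^\mu u_\mu+r_s\rho^{r_s}u^\mu{\bf D}_\mu\log\rho,\qquad {\bf D}_v\log\rho=\rho^{r_s}u^\mu{\bf D}_\mu\log\rho.
\]
Substituting into the scalar equation and multiplying through by $\rho^{r_s}$ produces
\[
\bigl[1-r_s(1+c_s^2)\bigr]{\bf D}_v\log\rho+(1+c_s^2){\bf D}^\mu v_\mu=0,
\]
and dividing by $1+c_s^2$ together with $1-r_s(1+c_s^2)=1-c_s^2$ and $\tfrac{1-c_s^2}{1+c_s^2}=1-2r_s$ gives exactly \eqref{divT.rho}.

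Finally, for \eqref{divT.v} I would compute
\[
({\bf D}_v v)_\nu=\rho^{2r_s}({\bf D}_u u)_\nu+r_s\rho^{2r_s-1}({\bf D}_u\rho)u_\nu,\qquad \tfrac{1}{2}{\bf D}_\nu(\rho^{2r_s})=r_s\rho^{2r_s-1}{\bf D}_\nu\rho,
\]
so that
\[
({\bf D}_v v)_\nu+\tfrac{1}{2}{\bf D}_\nu(\rho^{2r_s})=\rho^{2r_s-1}\Bigl[\rho({\bf D}_u u)_\nu+r_s\bigl({\bf D}_\nu\rho+u_\nu u^\lambda{\bf D}_\lambda\rho\bigr)\Bigr],
\]
which vanishes precisely by the spatial equation derived above. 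The two reductions are reversible (since $\rho>0$), so the system \eqref{divT.v}--\eqref{divT.rho} is equivalent to ${\bf D}^\mu T_{\mu\nu}=0$.
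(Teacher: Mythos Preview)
Your proof is correct and follows essentially the same approach as the paper: both split ${\bf D}^\mu T_{\mu\nu}=0$ into its $u$-parallel and $u$-orthogonal parts and then pass to the $v$-variable via the chain rule, using only the algebraic identity $r_s(1+c_s^2)=c_s^2$. The only organizational difference is that the paper first rewrites $T_{\mu\nu}$ in terms of $v$ (and uses the intermediate conservation law ${\bf D}^\mu(\rho^{1/(1+c_s^2)}u_\mu)=0$) before taking the divergence, whereas you project first in the $u$-variables and translate afterward; the computations are equivalent.
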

\begin{proof}
Taking the divergence of \eqref{Tmunu}, for $p=c_s^2\rho$, we have
\begin{align}
\label{divT}(1+c_s^2)u^\mu (e_\mu\rho) u_\nu+(1+c_s^2)\rho({\bf D}^\mu u_\mu)u_\nu+(1+c_s^2)\rho ({\bf D}_uu)_\nu+c_s^2{\bf D}_\nu\rho=0.
\end{align}
Contracting \eqref{divT} with $u^\nu$ gives
\begin{align}
\notag
u^\mu (e_\mu\rho)+(1+c_s^2)\rho({\bf D}^\mu u_\mu)=0,&\\
\frac{1}{1+c_s^2}\rho^{-\frac{c_s^2}{1+c_s^2}}u^\mu (e_\mu\rho)+\rho^{\frac{1}{1+c_s^2}}({\bf D}^\mu u_\mu)=0,&\label{divT.rho2}\\
\notag{\bf D}^\mu(\rho^\frac{1}{1+c^2_s} u_\mu)=0.&
\end{align}
Rewrite the energy momentum tensor \eqref{Tmunu} using $p=c_s^2\rho$ and \eqref{v.rs}: 
\begin{align}\label{Tmunu.2}
T_{\mu\nu}=(1+c_s^2)\rho^\frac{1-c_s^2}{1+c_s^2}v_\mu v_\nu+c_s^2\rho {\bf g}_{\mu\nu}.
\end{align}
Then the divergence of $T_{\mu\nu}$ becomes
\begin{align}\label{divT.2}
\notag{\bf D}^\mu T_{\mu\nu}=&\,(1+c_s^2){\bf D}^\mu(\rho^{\frac{1-c_s^2}{1+c_s^2}}v_\mu)v_\nu+(1+c_s^2)\rho^\frac{1-c_s^2}{1+c_s^2}({\bf D}_v v)_\nu+c_s^2{\bf D}_\nu\rho \\
=&\,(1+c_s^2){\bf D}^\mu(\rho^{\frac{1}{1+c_s^2}}u_\mu)v_\nu+(1+c_s^2)\rho^\frac{1-c_s^2}{1+c_s^2}({\bf D}_v v)_\nu+c_s^2{\bf D}_\nu\rho\\
\tag{by \eqref{divT.rho2}}=&\,(1+c_s^2)\rho^\frac{1-c_s^2}{1+c_s^2}({\bf D}_v v)_\nu+c_s^2{\bf D}_\nu\rho.
\end{align}
We arrive at \eqref{divT.v} after setting the last equation to zero and multiplying with $(1+c_s^2)^{-1}\rho^{\frac{c_s^2-1}{1+c_s^2}}$.

Going back to \eqref{divT.rho2}, we replace $u _\mu$ in favor of $v_\mu$ and expand the divergence:
\begin{align}\label{divT.rho3}
0={\bf D}^\mu(\rho^\frac{1}{1+c^2_s} u_\mu)={\bf D}^\mu(\rho^{\frac{1-c_s^2}{1+c_s^2}}v_\mu)=\frac{1-c_s^2}{1+c_s^2}\rho^{-\frac{2c_s^2}{1+c_s^2}}{\bf D}_v\rho+\rho^{\frac{1-c_s^2}{1+c_s^2}}{\bf D}^\mu v_\mu 
\end{align}
Multiplying the last RHS with $\rho^{\frac{c_s^2-1}{1+c_s^2}}$ gives \eqref{divT.rho}.
\end{proof}

\section{ODE analysis for the homogeneous tilted backgrounds}\label{app:ODE}

Here, we prove the existence of homogeneous solutions to the Einstein-Euler system, of the form \eqref{metric.hom}-\eqref{rho.hom}, and derive their precise asymptotic behaviors. For consistency, we express the homogeneous solutions in the gauge presented in Section \ref{sss:framework}. 

Consider the $\widetilde{g}$-orthonormal frame \eqref{frame.hom}. We compute the associated spatial connection coefficients using the Koszul formula:
\begin{align}\label{gamma.hom}
\widetilde{\gamma}_{IJB}=\frac{1}{2}\big\{\widetilde{g}([\widetilde{e}_I,\widetilde{e}_J],\widetilde{e}_B)-\widetilde{g}([\widetilde{e}_J,\widetilde{e}_B],\widetilde{e}_I)+\widetilde{g}([\widetilde{e}_B,\widetilde{e}_I],\widetilde{e}_J)\big\},
\end{align}
the choice of frame \eqref{frame.hom}, the commutation relation \eqref{[Yi,Yj]}, and the form of the metric \eqref{metric.hom}. They read:
\begin{align}\label{gamma.hom2}
\notag
\widetilde{\gamma}_{112}=\widetilde{\gamma}_{113}=\widetilde{\gamma}_{223}=\widetilde{\gamma}_{332}=0,\qquad\qquad\qquad\qquad\qquad\qquad\qquad \\
\notag \widetilde{\gamma}_{221}=-\widetilde{g}([\widetilde{e}_2,\widetilde{e}_1],\widetilde{e}_2)=2G^{-2}_2(t)G_1^{-1}(t)G^2(t),\qquad\widetilde{\gamma}_{331}=-\widetilde{g}([\widetilde{e}_3,\widetilde{e}_1],\widetilde{e}_3)=-2G^{-2}_2(t)G_1^{-1}(t)G^2(t),\\
 \widetilde{\gamma}_{123}=\frac{G^{-1}_1(t)G_2^{-1}(t)}{\sqrt{G_3^2(t)-G^4(t)G_2^{-2}(t)}}[G_3^2(t)-G_1^2(t)+G_2^2(t)],\\
\notag\widetilde{\gamma}_{231}=\frac{G^{-1}_1(t)G_2^{-1}(t)}{\sqrt{G_3^2(t)-G^4(t)G_2^{-2}(t)}}[G_3^2(t)-G_2^2(t)+G_1^2(t)-2G^4(t)G_2^{-2}(t)],\\
\notag\widetilde{\gamma}_{312}=\frac{G^{-1}_1(t)G_2^{-1}(t)}{\sqrt{G_3^2(t)-G^4(t)G_2^{-2}(t)}}[G_1^2(t)-G_3^2(t)+G_2^2(t)+2G^4(t)G_2^{-2}(t)].
\end{align}
Here, the frame and metric components are related via: 
\begin{align}\label{tilde:e=G}
\widetilde{e}_1^1=G^{-1}_1(t),\qquad \widetilde{e}_2^2=G^{-1}_2(t),\qquad\widetilde{e}_3^3=\frac{1}{\sqrt{G_3^2(t)-G^4(t)G_2^{-2}(t)}},\qquad\widetilde{e}_3^2=-\frac{G^2(t)G_2^{-2}(t)}{\sqrt{G_3^2(t)-G^4(t)G_2^{-2}(t)}}.
\end{align}
Notice that $\widetilde{\gamma}_{221}=-\widetilde{\gamma}_{331}$. Hence, we need only solve for $\gamma_{221}$ below.
Expand also the fluid speed relative to $\widetilde{e}_0,\widetilde{e}_1$
\begin{align}\label{fluid.hom.app}
\widetilde{u}=-\widetilde{u}_0\widetilde{e}_0+\widetilde{u}_1\widetilde{e}_1
\end{align}
and consider the renormalized fluid components $\widetilde{v}_0=\widetilde{\rho}^{r_s}\widetilde{u}_0,\widetilde{v}_1=\widetilde{\rho}^{r_s}\widetilde{u}_1$.

Then the identities in Lemma \ref{lem:red.eq} become:
\begin{align}
\label{k11.hom.eq}\partial_t\widetilde{k}_{11}-\mathrm{tr}\widetilde{k}\widetilde{k}_{11}=&-2\widetilde{\gamma}_{221}^2
+2\widetilde{\gamma}_{231}\widetilde{\gamma}_{312}
-\Lambda-(1+c_s^2)\widetilde{\rho}^{1-2r_s}\widetilde{v}_1^2-\frac{1}{2}(1-c_s^2)\widetilde{\rho},\\
\label{k22.hom.eq}\partial_t\widetilde{k}_{22}-\mathrm{tr}\widetilde{k}\widetilde{k}_{22}=&\,2\widetilde{\gamma}_{123}\widetilde{\gamma}_{312}-\Lambda-\frac{1}{2}(1-c_s^2)\widetilde{\rho},\\
\label{k33.hom.eq}\partial_t\widetilde{k}_{33}-\mathrm{tr}\widetilde{k}\widetilde{k}_{33}=&\,2\widetilde{\gamma}_{231}\widetilde{\gamma}_{123}-\Lambda-\frac{1}{2}(1-c_s^2)\widetilde{\rho},\\
\label{k23.hom.eq}\partial_t\widetilde{k}_{23}-\mathrm{tr}\widetilde{k}\widetilde{k}_{23}=&\,2\widetilde{\gamma}_{123}\widetilde{\gamma}_{221},\\
\label{gamma221.hom.eq}\partial_t\widetilde{\gamma}_{221}-\widetilde{k}_{11}\widetilde{\gamma}_{221}=&-2\widetilde{k}_{23}\widetilde{\gamma}_{123}-\widetilde{k}_{23}\widetilde{\gamma}_{231}-\widetilde{k}_{23}\widetilde{\gamma}_{312}\\
\label{gamma123.hom.eq}\partial_t\widetilde{\gamma}_{123}-\widetilde{k}_{11}\widetilde{\gamma}_{123}=&\,(\widetilde{k}_{11}-\widetilde{k}_{22})\widetilde{\gamma}_{312}+(\widetilde{k}_{11}-\widetilde{k}_{33})\widetilde{\gamma}_{231}-2\widetilde{k}_{23}\widetilde{\gamma}_{221}\\
\label{gamma231.hom.eq}\partial_t\widetilde{\gamma}_{231}-\widetilde{k}_{22}\widetilde{\gamma}_{231}=&\,(\widetilde{k}_{22}-\widetilde{k}_{33})\widetilde{\gamma}_{123}+(\widetilde{k}_{22}-\widetilde{k}_{11})\widetilde{\gamma}_{312}\\
\label{gamma312.hom.eq}\partial_t\widetilde{\gamma}_{312}-\widetilde{k}_{33}\widetilde{\gamma}_{312}=&\,(\widetilde{k}_{33}-\widetilde{k}_{11})\widetilde{\gamma}_{231}+(\widetilde{k}_{33}-\widetilde{k}_{22})\widetilde{\gamma}_{123}\\
\label{e1.hom.eq}\partial_t\widetilde{e}_1^1=&\,\widetilde{k}_{11}\widetilde{e}_1^1,\\
\label{e2.hom.eq}\partial_t\widetilde{e}_2^2=&\,\widetilde{k}_{22}\widetilde{e}_2^2+\widetilde{k}_{23}\widetilde{e}_3^2,\\
\label{e3.hom.eq}\partial_t\widetilde{e}_3^3=&\,\widetilde{k}_{33}\widetilde{e}_3^3,\\
\label{e3.hom.eq}\partial_t\widetilde{e}_3^2=&\,\widetilde{k}_{33}\widetilde{e}_3^2+\widetilde{k}_{23}\widetilde{e}_2^2,\\
\label{vI.hom.eq}
\partial_t \widetilde{v}_1=&\,\widetilde{k}_{11}\widetilde{v}_1,\\
\label{rho.hom.eq}-(1-2r_s)\partial_t\log\widetilde{\rho}+\mathrm{tr}\widetilde{k}=&\,\partial_t\log \widetilde{v}_0,
\end{align}
where $\widetilde{v}_0^2=\widetilde{v}_1^2+\widetilde{\rho}^{2r_s}$. Also, the constraints reduce to
\begin{align}
\label{Hamconst.hom}
&2\widetilde{\gamma}_{231}\widetilde{\gamma}_{123}+2\widetilde{\gamma}_{123}\widetilde{\gamma}_{312}+2\widetilde{\gamma}_{312}\widetilde{\gamma}_{231}-2\widetilde{\gamma}_{221}^2\\
\notag=&\,\widetilde{k}_{11}^2+\widetilde{k}_{22}^2+\widetilde{k}_{33}^2+2k_{23}^2-(\mathrm{tr}\widetilde{k})^2+2\Lambda+2(1+c_s^2)\widetilde{\rho}^{1-2r_s}\widetilde{v}_0^2-2c_s^2\widetilde{\rho},\\
\label{momconst.hom}&(\widetilde{k}_{22}-\widetilde{k}_{33})\widetilde{\gamma}_{221}-\widetilde{k}_{23}(\widetilde{\gamma}_{312}-\widetilde{\gamma}_{231})\\
\notag=&-(1+c_s^2)\widetilde{\rho}^{1-2r_s} \widetilde{v}_0\widetilde{v}_1.
\end{align}
Existence for the above system of equations leads to a homogeneous solution of the Einstein-Euler system, as long as the constraints are asymptotically satisfied to a certain order. 

The solutions we will be looking at admit the expansions
\begin{align}\label{hom.exp}
\notag\widetilde{k}_{\underline{I}\underline{I}}=-H+\sum_{m=2}^{6} \widetilde{k}_{IJ}^{\infty,m}e^{-mHt}+\mathcal{O}(e^{-7Ht}),\qquad \widetilde{k}_{23}=\widetilde{k}_{23}^{\infty,3}e^{-3Ht}+\mathcal{O}(e^{-5Ht}),\\
\widetilde{\gamma}_{221}=\mathcal{O}(e^{-4Ht}),\qquad
\widetilde{\gamma}_{IJB}=\widetilde{\gamma}_{IJB}^{\infty,1}e^{-Ht}+\sum_{m=3}^5\widetilde{\gamma}_{IJB}^{\infty,m}e^{-mHt}+\mathcal{O}(e^{-6Ht}),\quad I\neq J\neq B\neq I,\\
\notag\widetilde{e}_{I}^i=(\widetilde{e}_{I}^i)^{\infty,1}e^{-Ht}+\mathcal{O}(e^{-3Ht}),\quad I=i,\qquad\widetilde{e}_3^2=\mathcal{O}(e^{-4Ht}),\\
\notag\widetilde{v}_\mu=\sum_{m=1,3}\widetilde{v}_\mu^{\infty,m}e^{-mHt}+\mathcal{O}(e^{-4Ht}),\qquad
\widetilde{\rho}^{1-2r_s}=\sum_{m=2,4}(\widetilde{\rho}^{1-2r_s})^{\infty,m}e^{-mHt}+\mathcal{O}(e^{-5Ht}).
\end{align}
The initial data of the above variables at infinity consist of the constants $\widetilde{k}_{IJ}^{\infty,3},\widetilde{\gamma}_{IJB}^{\infty,1},(\widetilde{e}_I^i)^{\infty,1},\widetilde{v}_\mu^{\infty,1},(\widetilde{\rho}^{1-2r_s})^{\infty,2}$, though the values $\widetilde{\gamma}_{IJB}^{\infty,1}$ are determined from $(\widetilde{e}_I^i)^{\infty,1}$ and vice versa. Also, the extreme tilt of the fluid translates to the condition $|\widetilde{v}_0^{\infty,1}|=|\widetilde{v}_1^{\infty,1}|$. Finally, the remaining constants are further constrained by the identities \eqref{Hamconst.hom}-\eqref{momconst.hom}. More precisely, we have the following lemma.
\begin{lemma}\label{lem:app.hom}
Given constants $\widetilde{k}_{\underline{I}\underline{I}}^{\infty,3},\widetilde{k}^{\infty,3}_{23},G_I^{\infty,-1},\widetilde{v}^{\infty,1}_\mu,(\widetilde{\rho}^{1-2r_s})^{\infty,2}$, there exists a unique solution to the evolution equations \eqref{k11.hom.eq}-\eqref{rho.hom.eq} with the asymptotic behavior \eqref{hom.exp}, such that 
\begin{align}\label{e.gamma.ind.const}
\begin{split}
(\widetilde{e}^i_I)^{\infty,1}=&\,(G^{\infty,1}_I)^{-1},\qquad i=I,\\
\widetilde{\gamma}_{123}^{\infty,1}=&\,(G^{\infty,1}_1)^{-1}(G^{\infty,1}_2)^{-1}(G^{\infty,1}_3)^{-1}[(G^{\infty,1}_3)^2-(G^{\infty,1}_1)^2+(G^{\infty,1}_2)^2],\\
\widetilde{\gamma}_{231}^{\infty,1}=&\,(G^{\infty,1}_1)^{-1}(G^{\infty,1}_2)^{-1}(G^{\infty,1}_3)^{-1}[(G^{\infty,1}_3)^2-(G^{\infty,1}_2)^2+(G^{\infty,1}_1)^2],\\
\widetilde{\gamma}_{312}^{\infty,1}=&\,(G^{\infty,1}_1)^{-1}(G^{\infty,1}_2)^{-1}(G^{\infty,1}_3)^{-1}[(G^{\infty,1}_1)^2-(G^{\infty,1}_3)^2+(G^{\infty,2}_2)^2],
\end{split}
\end{align}
defined over an interval $[T,+\infty)$, for some $T>0$ sufficiently large. If in addition the following identities hold:
\begin{align}\label{hom.init.const}
\notag
&\sum_{I,J}(\widetilde{k}^{\infty,3}_{IJ})^2-(\mathrm{tr}k^{\infty,3})^2
+\sum_{I,J}(\widetilde{k}^{\infty,2}_{IJ}\widetilde{k}^{\infty,4}_{IJ}-2H\widetilde{k}_{IJ}^{\infty,6})-2\mathrm{tr}k^{\infty,2}\mathrm{tr}k^{\infty,4}+6H\mathrm{tr}k^{\infty,6}\\
\notag&2(1+c_s^2)\big\{(\widetilde{\rho}^{1-2r_s})^{\infty,4}(\widetilde{v}_0^{\infty,1})^2+2(\widetilde{\rho}^{1-2r_s})^{\infty,2}\widetilde{v}_0^{\infty,1}\widetilde{v}_0^{\infty,3}\big\}-2c_s^2\big[\widetilde{\rho}^{1-2r_s})^{\infty,2}\big]^{\frac{1}{1-2r_s}}\\
=&\,2\sum_{I\neq J\neq B\neq I}(\widetilde{\gamma}^{\infty,1}_{IJB}\widetilde{\gamma}^{\infty,5}_{BIJ}+\frac{1}{2}\widetilde{\gamma}^{\infty,3}_{IJB}\widetilde{\gamma}^{\infty,3}_{BIJ}),\\
\notag&\widetilde{k}_{23}^{\infty,3}(\widetilde{\gamma}_{312}^{\infty,1}-\widetilde{\gamma}_{231}^{\infty,1})\\
\notag=&\,(1+c_s^2)(\rho^{1-2r_s})^{\infty,2}\widetilde{v}_0^{\infty,1}\widetilde{v}_1^{\infty,1},
\end{align}
the metric $\widetilde{g}$ of the form \eqref{metric.hom}, defined from $\widetilde{e}_I^i$ via \eqref{tilde:e=G}, together with $\widetilde{\rho},\widetilde{u}=\widetilde{\rho}^{-r_s}\widetilde{v}$, constitute a homogeneous solution to the Einstein-Euler system of the form \eqref{metric.hom}-\eqref{rho.hom}, satisfying the estimates in Lemma \ref{lem:hom.est}. 
\end{lemma}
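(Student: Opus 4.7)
The plan is a backwards Fuchsian-type construction. First I would introduce the compactified time variable $\tau = e^{-Ht} \in (0, e^{-HT}]$ and renormalize each unknown by its expected leading-order decay prescribed in \eqref{hom.exp}, setting for instance
\[
\widetilde{k}_{IJ} = -\delta_{IJ}H + \tau^{2}\mathsf{K}_{IJ}(\tau),\qquad
\widetilde{\gamma}_{IJB} = \tau\,\mathsf{G}_{IJB}(\tau),\qquad
\widetilde{e}^{i}_{I} = \tau\,\mathsf{E}^{i}_{I}(\tau),
\]
and analogously renormalizing $\widetilde{v}_{\mu}$ by $\tau$ and $\widetilde{\rho}^{1-2r_s}$ by $\tau^{2}$, together with $\widetilde{\gamma}_{221}$ by $\tau^{4}$ and $\widetilde{e}_{3}^{2}$ by $\tau^{4}$ to account for the faster decay predicted by \eqref{hom.exp}. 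Under this rescaling the evolution system \eqref{k11.hom.eq}--\eqref{rho.hom.eq} becomes a first-order singular ODE in $\tau$ of Fuchsian form
\[
\tau\,\partial_{\tau}\mathbf{U} = \mathcal{A}\,\mathbf{U} + \tau\,\mathcal{F}(\tau,\mathbf{U}),
\]
where $\mathbf{U}$ collects all the rescaled unknowns, $\mathcal{A}$ is a constant matrix whose eigenvalues are read off from the rates in \eqref{hom.exp}, and $\mathcal{F}$ is smooth in both arguments near $\tau = 0$.

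Next I would run the formal power-series expansion to the finite order indicated in \eqref{hom.exp}, inserting the ansatz into the rescaled system and matching coefficients of powers of $\tau$. The order-$0$ equations determine nothing (they are solved by the leading constants which we are free to prescribe), the order-$\tau$ equations recover the algebraic identities \eqref{e.gamma.ind.const} relating $\widetilde{\gamma}^{\infty,1}_{IJB}$ to the frame constants $G_{I}^{\infty,1}$, and subsequent orders determine the coefficients $\widetilde{k}^{\infty,m}_{IJ},\widetilde{\gamma}^{\infty,m}_{IJB}$ with $m \geq 2$ in terms of the prescribed data $\widetilde{k}^{\infty,3}_{\underline{I}\underline{I}},\widetilde{k}^{\infty,3}_{23},G_{I}^{\infty,1},\widetilde{v}^{\infty,1}_{\mu},(\widetilde{\rho}^{1-2r_s})^{\infty,2}$. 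The extreme-tilt condition $|\widetilde{v}_{0}^{\infty,1}| = |\widetilde{v}_{1}^{\infty,1}|$ is automatic from the normalization $-\widetilde{v}_{0}^{2}+\widetilde{v}_{1}^{2} = -\widetilde{\rho}^{2r_{s}}$ combined with the rates in \eqref{hom.exp}. Having constructed a formal polynomial approximation $\mathbf{U}_{\mathrm{app}}(\tau)$ accurate to a sufficiently high order, I would substitute $\mathbf{U} = \mathbf{U}_{\mathrm{app}} + \tau^{M}\mathbf{R}$ for $M$ large and apply a standard Fuchsian existence theorem (or a direct contraction mapping in $C^{0}([0,\tau_{0}])$ equipped with a weighted norm) to produce a unique smooth remainder $\mathbf{R}$; this yields the solution on $\tau \in (0,\tau_{0}]$, equivalently on $t \in [T,+\infty)$ for $T = -H^{-1}\log\tau_{0}$.

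For the final claim, I would show that the Hamiltonian and momentum constraints \eqref{Hamconst.hom}--\eqref{momconst.hom} propagate as consequences of the reduced evolution equations together with the equations of motion for the fluid, which is a standard computation using the contracted second Bianchi identity applied to the Einstein tensor. Hence if \eqref{hom.init.const} holds, the constraints are satisfied at leading order at infinity; the propagation equations then force them to hold identically on $[T,+\infty)$, so that $\widetilde{\mathbf{g}},\widetilde{\rho},\widetilde{u}$ solves the full Einstein--Euler system. The asymptotic estimates of Lemma \ref{lem:hom.est}, including the sharper rates $|\partial_{t}^{N}G(t)| \leq C_{N}e^{-Ht/2}$ and the expansions for $\theta(t)$ and $\widetilde{\rho}(t)$, follow by differentiating the expansion \eqref{hom.exp} and applying the ODE to trade $\partial_{t}$ derivatives for algebraic bounds on the rescaled unknowns.

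The main obstacle I anticipate is bookkeeping at the algebraic level rather than analytic: verifying that the Fuchsian eigenvalue structure of $\mathcal{A}$ is compatible with the prescribed orders in \eqref{hom.exp} (no unexpected resonances spoiling the series), and confirming that the constraint identities \eqref{hom.init.const} are precisely the obstructions encountered at the corresponding order of the formal expansion, so that the parameter count of free asymptotic data matches the dimension of the constraint-satisfying solution space. The degenerate decay of $\widetilde{\gamma}_{221}$ and $\widetilde{e}_{3}^{2}$ (faster than the generic rate $e^{-Ht}$) requires separate attention, as it reflects a subtle cancellation in the system and must be imposed as part of the ansatz rather than being automatic.
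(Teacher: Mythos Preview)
Your proposal is correct in overall strategy and arrives at the same result, but it packages the argument differently from the paper and is vague at precisely the point where the paper is explicit.

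The paper works directly in $t$: it sets $\widetilde{k}_{IJ}=-H\delta_{IJ}+R_{IJ}$, rewrites the system schematically as a perturbation of decoupled linear ODEs, and runs a Picard iteration from $t=+\infty$ to obtain the solution with the rough asymptotics \eqref{hom.exp}. Your Fuchsian reformulation with $\tau=e^{-Ht}$ is an equivalent and arguably cleaner way to do the same thing; Rendall's paper \cite{Ren}, cited here, uses exactly this kind of machinery. So for the existence part the two approaches are interchangeable.

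Two places need tightening. First, the relations \eqref{e.gamma.ind.const} are not recovered from the order-$\tau$ equations of the evolution system; they are \emph{imposed} as initial data so that the abstract variable $\widetilde{\gamma}_{IJB}$ you evolve actually coincides with the Levi--Civita connection of the metric built from $\widetilde{e}^i_I$ via \eqref{tilde:e=G}. The paper closes this loop by a separate uniqueness argument at the end (both the true connection and the evolved $\widetilde{\gamma}$ satisfy the same ODE with the same data at infinity). Your proposal elides this step.

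Second, and more importantly, the constraint argument in the paper is not just ``Bianchi implies propagation, hence leading-order vanishing suffices.'' The paper computes the explicit homogeneous propagation ODEs
\[
\partial_t\mathcal{C}_0-2\,\mathrm{tr}\widetilde{k}\,\mathcal{C}_0=0,\qquad \partial_t\mathcal{C}_1-(\mathrm{tr}\widetilde{k}+\widetilde{k}_{11})\mathcal{C}_1=0,
\]
which fix the natural decay rates of $\mathcal{C}_0,\mathcal{C}_1$ as $e^{-6Ht}$ and $e^{-4Ht}$. These rates are what single out orders $6$ and $4$ as critical: the conditions \eqref{hom.init.const} are exactly $\mathcal{C}_0^{\infty,6}=\mathcal{C}_1^{\infty,4}=0$, and once those leading coefficients vanish the homogeneous ODEs force $\mathcal{C}_\mu\equiv 0$. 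Your appeal to the Bianchi identity would eventually produce these same equations, but until you extract the coefficients $-2\,\mathrm{tr}\widetilde{k}$ and $-(\mathrm{tr}\widetilde{k}+\widetilde{k}_{11})$ you cannot identify \emph{which} order of the formal expansion is obstructed, and hence cannot match \eqref{hom.init.const} to the right condition. You flag this as ``bookkeeping'' in your obstacles paragraph; in fact it is the substantive content of the constraint step.
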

\begin{proof}
Set
\begin{align}\label{RIJ}
\widetilde{k}_{II}=:-H\delta_{IJ}+R_{IJ}
\end{align}
and rewrite equations \eqref{k11.hom.eq}-\eqref{rho.hom.eq} schematically:
\begin{align}
\label{RII.eq}\partial_tR_{\underline{I}\underline{I}}+3HR_{\underline{I}\underline{I}}=&\,R_{\underline{I}\underline{I}}\text{tr}R-H\text{tr}R+\widetilde{\gamma}\star\widetilde{\gamma}-\delta_{1I}(1+c_s^2)\widetilde{\rho}^{1-2r_s}\widetilde{v}_1^2-\frac{1}{2}(1-c_s^2)\widetilde{\rho},\\
\label{R23.eq}\partial_tR_{23}+3HR_{23}=&\,R_{23}\text{tr}R+2\widetilde{\gamma}_{123}\widetilde{\gamma}_{221},\\
\label{gamma.hom.sch}\partial_t\widetilde{\gamma}_{221}+H\widetilde{\gamma}_{221}=&\,R_{11}\widetilde{\gamma}_{221}+R_{23}\star\widetilde{\gamma}\\
\label{gamma.hom.sch2}\partial_t\widetilde{\gamma}_{IJB}+H\widetilde{\gamma}_{IJB}=&\,R_{\underline{I}\underline{I}}\widetilde{\gamma}_{IJB}+R\star \widetilde{\gamma},\qquad I\neq J\neq B\neq I\\
\label{e.hom.sch}\partial_t\widetilde{e}^i_I+H\widetilde{e}_I^i=&\,R\star\widetilde{e},\\
\label{v.hom.sch}\partial_t\widetilde{v}_1+H\widetilde{v}_1=&-R_{11}\widetilde{v}_1,\\
\label{rho.hom.sch}\partial_t\log (e^{3Ht}\widetilde{\rho}^{1-2r_s}\widetilde{v}_0)=&\,\text{tr}R.
\end{align}
The constants $\widetilde{k}^{\infty,3}_{IJ},\widetilde{v}^{\infty,1}_,(\widetilde{\rho}^{1-2r_s})^{\infty,2}$ and $(\widetilde{e}_I^i)^{\infty,1},\widetilde{\gamma}_{IJB}^{\infty,1}$, as in \eqref{e.gamma.ind.const}, are the initial conditions for the variables $R_{IJ},\widetilde{\gamma}_{IJB},\widetilde{e}^i_I,\widetilde{v}_1,\widetilde{\rho}^{1-2r_s}$ at infinity, with trivial data for $\widetilde{\gamma}_{221},\widetilde{e}_3^2$.

A standard Picard iteration argument gives rise to a solution to \eqref{RII.eq}-\eqref{rho.hom.sch}, defined in an interval $[T,+\infty)$, for some $T$ sufficiently large, and satisfying
\begin{align}\label{hom.rough.exp}
R_{\underline{I}\underline{I}}=\mathcal{O}(e^{-2Ht}),\qquad R_{23}=\widetilde{k}^{\infty,3}e^{-3Ht}+\mathcal{O}(e^{-5Ht}),\qquad\widetilde{\gamma}_{221}=\mathcal{O}(e^{-5Ht}),\\
\widetilde{\gamma}_{IJB}=\widetilde{\gamma}^{\infty,1}_{IJB}e^{-Ht}+\mathcal{O}(e^{-3Ht}),\qquad I\neq J\neq B\neq I,\\
\widetilde{e}_{I}^i=(\widetilde{e}_{I}^i)^{\infty,1}e^{-Ht}+\mathcal{O}(e^{-3Ht}),\quad I=i,\qquad\widetilde{e}_3^2=\mathcal{O}(e^{-4Ht}),\\
\notag\widetilde{v}_\mu=\widetilde{v}_\mu^{\infty,1}e^{-Ht}+\mathcal{O}(e^{-3Ht}),\qquad
\widetilde{\rho}^{1-2r_s}=(\widetilde{\rho}^{1-2r_s})^{\infty,2}e^{-2Ht}+\mathcal{O}(e^{-4Ht}).
\end{align}
%
Computing the higher orders in the expansions of the previous variables yields the desired asymptotic behavior \eqref{hom.exp} in a straightforward manner. 

Let 
\begin{align}
\label{C0.cal}\mathcal{C}_0:
=&\,\widetilde{k}_{11}^2+\widetilde{k}_{22}^2+\widetilde{k}_{33}^2+2k_{23}^2-(\mathrm{tr}\widetilde{k})^2+2\Lambda+2(1+c_s^2)\widetilde{\rho}^{1-2r_s}\widetilde{v}_0^2-2c_s^2\widetilde{\rho}\\
\notag&-2\widetilde{\gamma}_{231}\widetilde{\gamma}_{123}-2\widetilde{\gamma}_{123}\widetilde{\gamma}_{312}-2\widetilde{\gamma}_{312}\widetilde{\gamma}_{231}+\widetilde{\gamma}_{221}^2+\widetilde{\gamma}_{331}^2,\\
\label{C1.cal}\mathcal{C}_1:=&\,(\widetilde{k}_{22}-\widetilde{k}_{11})\widetilde{\gamma}_{221}+(\widetilde{k}_{33}-\widetilde{k}_{11})\widetilde{\gamma}_{331}-\widetilde{k}_{23}(\widetilde{\gamma}_{312}-\widetilde{\gamma}_{231})
+(1+c_s^2)\widetilde{\rho}^{1-2r_s} \widetilde{v}_0\widetilde{v}_1.
\end{align}
Differentiating $\mathcal{C}_0,\mathcal{C}_1$ and plugging in \eqref{k11.hom.eq}-\eqref{rho.hom.eq}, after a tedious computation, we obtain the equations:
\begin{align}
\label{C0.cal.eq}\partial_t\mathcal{C}_0-2\text{tr}k\,\mathcal{C}_0=&\,0\qquad\Longleftrightarrow\qquad\partial_t\mathcal{C}_0+[6H+\mathcal{O}(e^{-2Ht})]\mathcal{C}_0=0,\\
\label{C1.cal.eq}\partial_t\mathcal{C}_1-(\text{tr}\widetilde{k}+\widetilde{k}_{11})\mathcal{C}_1=&\,0\qquad\Longleftrightarrow \qquad\partial_t\mathcal{C}_1+[4H+\mathcal{O}(e^{-2Ht})]\mathcal{C}_1=0.
\end{align}
If $\mathcal{C}_\mu^{\infty,m}e^{-mHt}$ are the terms in the expansions of $\mathcal{C}_\mu$, $\mu=0,1$, then it is evident from the homogeneous equations \eqref{C0.cal.eq}-\eqref{C1.cal.eq} that $\mathcal{C}_\mu=0$ everywhere, provided that $\mathcal{C}_0^{\infty,6}=\mathcal{C}_1^{\infty,4}=0$. The latter conditions are in fact equivalent to \eqref{hom.init.const}. This is indeed the case by expanding each term in the identities \eqref{C0.cal}-\eqref{C1.cal} using \eqref{hom.exp}. Hence, the constraints \eqref{Hamconst.hom}-\eqref{momconst.hom} are valid for all $t\in[T,+\infty)$.

The homogeneous solution $\widetilde{g},\widetilde{u},\widetilde{\rho}$ to the Einstein-Euler system of the form \eqref{metric.hom}-\eqref{rho.hom} is defined such that $G_I,G$ satisfy \eqref{tilde:e=G} and $\widetilde{u}=\widetilde{\rho}^{-r_s}\widetilde{v}$. Then one straightforwardly shows that the variables $\widetilde{k}_{IJ},\widetilde{\gamma}_{IJB}$ given by the solution to \eqref{k11.hom.eq}-\eqref{gamma312.hom.eq} are in fact the connection coefficients of $\widetilde{g}$ by observing that they satisfy the same equations with same initial conditions at infinity.
The expansions \eqref{hom.exp} imply the estimates in Lemma \ref{lem:hom.est}, since the remainders $\mathcal{O}(e^{Bt})$ satisfy $|\partial_t^N\mathcal{O}(e^{Bt})|\leq C_{N,B}e^{Bt}$, an immediate consequence of the equations \eqref{k11.hom.eq}-\eqref{rho.hom.eq}. 
\end{proof}

\section[The stability problem on a fixed $\mathbb{S}^3$--FLRW background]{Future stability of perfect fluids with extreme tilt and linear equation of state $p=c_s^2\rho$ for the relativistic Euler equations on a fixed $\mathbb{S}^3$--FLRW background: The full range $1/\sqrt{3}<c_s<1$}\label{app:Euler}

Here, we assume that the metric $g$ is non-dynamical, everywhere equal to the FLRW metric 
\begin{align}\label{FLRW.g}
g=-dt^2+a^2(t)g_{\mathbb{S}^3}=-dt^2+a^2(t)\sum_{i=1}^3\psi^i\otimes\psi^i,
\end{align}
where 
\begin{align}\label{FLRW.a}
a^2(t)=(a^\infty)^2e^{2Ht}+\mathcal{O}(1),\qquad\frac{a'}{a}=H+\mathcal{O}(e^{-2Ht}),\qquad\text{as $t\rightarrow+\infty$}.
\end{align}
The variables $\rho,v$ solve the relativistic Euler equations, ie. \eqref{divT.v}-\eqref{divT.rho} with $g$ as in \eqref{FLRW.g}. It is easy to see that a similar ODE analysis from infinity as in Lemma \ref{lem:app.hom}, only simpler since there is no coupling to Einstein, yields a homogeneous solution to the Euler equations with the same qualitative properties for the fluid. Indeed, the homogeneous Euler equations \eqref{vI.hom.eq}-\eqref{rho.hom.eq} reduce to
\begin{align}
\label{tilde.v1.rho.eq}\partial_t[a(t)\widetilde{v}_1]=0,\qquad
\partial_t[\widetilde{\rho}^{1-2r_s}a^3(t)\widetilde{v}_0]=0,
\end{align}
where $\widetilde{v}_0^2=\widetilde{v}_1^2+\widetilde{\rho}^{2r_s}$. Solving \eqref{tilde.v1.rho.eq} from $t=+\infty$ to some $t=T\gg1$, we construct solutions having the asymptotic behavior 
\begin{align}\label{tilde.v.rho.exp}
\begin{split}
\widetilde{v}_\mu=\widetilde{v}_\mu^{\infty}e^{-Ht}+\mathcal{O}(e^{-3Ht}),\;\mu=0,1,\qquad
\widetilde{\rho}^{1-2r_s}=(\widetilde{\rho}^\infty)^{1-2r_s}e^{-2Ht}+\mathcal{O}(e^{-4Ht}),\\
r_s=\frac{c_s^2}{1+c_s^2}\in(\frac{1}{2},\frac{1}{4}),\qquad c_s^2\in(\frac{1}{3},1),\qquad (\widetilde{v}_0^\infty)^2=(\widetilde{v}_1^\infty)^2,
\end{split}
\end{align}
where $\widetilde{v}_1^\infty,\widetilde{\rho}^\infty$ are constants that correspond to the initial data of $\widetilde{v}_1,\widetilde{\rho}$ at $t=+\infty$.

The perturbed problem now concerns only the fluid variables $\rho,v$ and the equations \eqref{v0.eq}-\eqref{rho.eq} take the following form.
\begin{lemma}\label{lem:red.Euler}
The fluid variables $v_0,v_I,\rho^{2r_s}$ satisfy the equations: 
\begin{align}
\label{v0.Euler.eq}\partial_t v_0+\frac{a'}{a}v_0-\frac{v_C}{v_0}e_Cv_0=&\,\frac{1}{2}\frac{1}{v_0}\partial_t(\rho^{2r_s})+\frac{a'}{a}\frac{\rho^{2r_s}}{v_0},\\
\label{vI.Euler.eq}\partial_t v_I+\frac{a'}{a}v_I-\frac{v_C}{v_0}e_Cv_I=&\,\frac{1}{2}\frac{1}{v_0}e_I(\rho^{2r_s}),\\
\label{rho.Euler.eq}(1-2r_s)\partial_t (\rho^{2r_s})+3\frac{a'}{a}\rho^{2r_s}
-\frac{v_C}{v_0}e_C(\rho^{2r_s})=&-\frac{\rho^{2r_s}}{v_0}\partial_tv_0+\frac{\rho^{2r_s}}{v_0}e_Cv_C,
\end{align}
where $e_0=\partial_t$, $e_I=a^{-1}Y_I$.
\end{lemma}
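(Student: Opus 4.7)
The lemma is a direct specialization of Lemma \ref{lem:red.eq} to the fixed FLRW background \eqref{FLRW.g}, so my plan is purely to identify the simplifications forced by the geometry and then substitute. First I would read off the background data. Since $g = -dt^2 + a^2(t) g_{\mathbb S^3}$ with $e_I = a^{-1} Y_I$ and the $t$-foliation is given by the constant-$t$ slices of this explicit metric, the lapse is $n \equiv 1$ (so that $e_I n = 0$ and $e_0 = \partial_t$), and the second fundamental form of $\Sigma_t$ is the pure trace $k_{IJ} = -\tfrac{a'}{a}\delta_{IJ}$, giving $\mathrm{tr}\, k = -3\tfrac{a'}{a}$. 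For the spatial connection, using $[Y_I, Y_J] = 2\epsilon_{IJ}{}^L Y_L$ and $g(e_I, e_J) = \delta_{IJ}$, the Koszul formula yields $\gamma_{IJB} = a^{-1}\epsilon_{IJB}$, which is totally antisymmetric in its three indices.

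Next I would feed these into the three fluid evolution equations \eqref{v0.eq}--\eqref{rho.eq}. For \eqref{v0.eq} all the lapse terms drop, the shear term $k_{CD} v_C v_D$ collapses to $-\tfrac{a'}{a} v_C v_C$, and substituting the mass-shell identity $v_C v_C = v_0^2 - \rho^{2r_s}$ and dividing by $-v_0$ produces \eqref{v0.Euler.eq}. For \eqref{vI.eq} the term $\gamma_{CDI} v_C v_D$ vanishes by the total antisymmetry of $\gamma$, the lapse term vanishes, $k_{CI} v_0 v_C = -\tfrac{a'}{a} v_0 v_I$, and dividing again by $-v_0$ gives \eqref{vI.Euler.eq}. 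For the density equation \eqref{rho.eq} one substitutes $\mathrm{tr}\,k = -3\tfrac{a'}{a}$, notes that $\gamma_{CDC} = 0$ by antisymmetry, uses $\partial_t \log \rho = (2r_s \rho^{2r_s})^{-1}\partial_t(\rho^{2r_s})$ to trade $\log\rho$ for $\rho^{2r_s}$, and multiplies through by $-\rho^{2r_s}/v_0$ to land on \eqref{rho.Euler.eq} (possibly up to the overall factor $(2r_s)^{-1}$, which I would track carefully in case a renormalization is being absorbed on the left).

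There is essentially no obstacle: every cancellation is forced by either $\widehat n = 0$, the isotropy of $k_{IJ}$, or the total antisymmetry of $\gamma_{IJB}$ on $\mathbb S^3$. The only step requiring any care is the rearrangement of the density equation, where one should verify that the coefficient of $\partial_t(\rho^{2r_s})$ has the stated form; this is a routine manipulation using $-v_0^2 + v_C v_C = -\rho^{2r_s}$ and the chain rule relating $\partial_t \log\rho$ to $\partial_t(\rho^{2r_s})$, after which no further identities are needed.
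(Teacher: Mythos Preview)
Your approach is essentially the same as the paper's: both specialize Lemma~\ref{lem:red.eq} by setting $n=1$, $k_{IJ}=-\tfrac{a'}{a}\delta_{IJ}$, and then dispose of the connection-coefficient terms. The only cosmetic difference is that the paper justifies $\gamma_{CDI}v_Cv_D=0$ and $\gamma_{CDC}=0$ by noting that $e_I=a^{-1}Y_I$ is Killing (hence $g(\nabla_{\bar v}e_I,\bar v)=0$), whereas you compute $\gamma_{IJB}=a^{-1}\epsilon_{IJB}$ via Koszul and invoke total antisymmetry---these are equivalent observations, and your flagged caution about the $(2r_s)^{-1}$ factor in the density equation is appropriate bookkeeping.
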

\begin{proof}
They follow from the equations \eqref{v0.eq}-\eqref{rho.eq} in Lemma \ref{lem:red.eq}, after setting $n=1$, $k_{IJ}=-\delta_{IJ}\frac{a'}{a}$, and realizing that 
\begin{align}\label{Euler.Killing.term}
\gamma_{CDI}v_Cv_D=-\gamma_{CID}v_Cv_D=-g(\nabla_{\overline{v}}e_I,\overline{v})=0,\qquad \overline{v}:=v_Ie_I,\qquad\gamma_{CDC}=0,
\end{align}
since $e_I=a^{-1}Y_I$, where $Y_I$ is $g_{\mathbb{S}^3}$-Killing. 
\end{proof}
By making the same bootstrap assumptions as in \eqref{Boots}, albeit only for $\widehat{v}_\mu,\widehat{\rho^{2r_s}}$ since the geometric variables are fixed to their FLRW values, we notice that all the estimates derived in Section \ref{sec:FSE} are also valid for the solution to \eqref{v0.Euler.eq}-\eqref{rho.Euler.eq}. In order to show that in the present case we can treat the full range of sound speeds beyond radiation, ie. $c_s^2\in(\frac{1}{3},1)$, we need only improve the top order energy estimate \eqref{fluid.top.est} as to eliminate the term with integrand coefficient $e^{-H\tau}e^{2A_sH\tau}$, which prevents us from applying Gronwall's inequality when $2A_s\ge1$, see Remark \ref{rem:top.fluid.est}. Indeed, we will show that this is possible due to the simplified form of the equations \eqref{v0.Euler.eq}-\eqref{rho.Euler.eq} relative to \eqref{v0.eq}-\eqref{rho.eq}, importantly, thanks to the fact that all terms in the above RHSs contain a factor of order $\rho^{2r_s}$. Notice that this is not the case for example in the equation \eqref{vI.eq}, e.g. due to the term $\gamma_{CDI}v_Cv_D$. No obvious cancellations in the top order energy estimates seem to be possible because of such terms, cf. \eqref{Euler.Killing.term}. 

We proceed by considering the differentiated analogues of \eqref{v0.Euler.eq}-\eqref{rho.Euler.eq}, commuting the equations with $Y^\iota$, $|\iota|\leq N$:
\begin{align}
\label{v0.Euler.diff.eq}\partial_t Y^\iota\widehat{v}_0+\frac{a'}{a}Y^\iota\widehat{v}_0-\frac{v_C}{v_0}e_CY^\iota\widehat{v}_0=&\,\frac{1}{2}\frac{1}{v_0}\partial_tY^\iota\widehat{\rho^{2r_s}}
-\frac{1}{2}\frac{\partial_t\rho^{2r_s}}{v_0^2}Y^\iota\widehat{v}_0+\frac{a'}{a}\frac{Y^\iota\widehat{\rho^{2r_s}}}{v_0}-\frac{a'}{a}\frac{\rho^{2r_s}}{v_0^2}Y^\iota\widehat{v}_0+\mathfrak{R}_0^\iota,\\
\label{vI.Euler.diff.eq}\partial_t Y^\iota\widehat{v}_I+\frac{a'}{a}Y^\iota\widehat{v}_I-\frac{v_C}{v_0}e_CY^\iota\widehat{v}_I=&\,\frac{1}{2}\frac{1}{v_0}e_IY^\iota\widehat{\rho^{2r_s}}+\mathfrak{R}_I^\iota,
\end{align}
and
\begin{align}
\label{rho.Euler.diff.eq}&(1-2r_s)\partial_t Y^\iota\widehat{\rho^{2r_s}}+(3\frac{a'}{a}+\frac{\partial_tv_0}{v_0})Y^\iota\widehat{\rho^{2r_s}}
-(1-2r_s)\frac{v_C}{v_0}e_CY^\iota\widehat{\rho^{2r_s}}\\
\notag=&\,\frac{\rho^{2r_s}}{v_0}e_CY^\iota\widehat{v}_C
-\frac{\rho^{2r_s}}{v_0}\partial_tY^\iota\widehat{v}_0
+\frac{\rho^{2r_s}}{v_0}\frac{\partial_tv_0}{v_0}Y^\iota\widehat{v}_0
+\mathfrak{R}^\iota.
\end{align}
All lower order terms from the previous commutations are included in $\mathfrak{R}_0^\iota,\mathfrak{R}_I^\iota,\mathfrak{R}$. Moreover, they do not contribute to the dangerous terms containing $e^{-Ht}e^{2A_sHt}$ coefficients in the final energy inequalities and can be treated as in the coupled to Einstein case. 

The analogous top order energy identity to \eqref{top.fluid.id}, for the system \eqref{v0.Euler.diff.eq}-\eqref{rho.Euler.diff.eq}, is the following: 
\begin{align}\label{top.Euler.id}
-e^{4Ht}\rho^{2r_s}Y^\iota \widehat{v}_0\times\eqref{v0.Euler.diff.eq}
+e^{4Ht}\rho^{2r_s}Y^\iota \widehat{v}_I\times\eqref{vI.Euler.diff.eq}
+\frac{1}{2}e^{4Ht}Y^\iota\widehat{\rho^{2r_s}}\times\eqref{rho.Euler.diff.eq},
\end{align}
which expands to
\begin{align}\label{top.Euler.id2}
\notag&\frac{1}{2}\partial_t\big\{
e^{4Ht}\rho^{2r_s}Y^\iota \widehat{v}_I Y^\iota \widehat{v}_I
-e^{4Ht}\rho^{2r_s}(Y^\iota \widehat{v}_0)^2
+\frac{1}{2}(1-2r_s)e^{4Ht}(Y^\iota\widehat{\rho^{2r_s}})^2\big\}\\
\notag&-\frac{1}{2}e^{2Ht}\partial_t(e^{2Ht}\rho^{2r_s})
Y^\iota \widehat{v}_I Y^\iota \widehat{v}_I
+\frac{1}{2}e^{2Ht}\partial_t(e^{2Ht}\rho^{2r_s})
(Y^\iota \widehat{v}_0)^2
+(\frac{a'}{a}-H)e^{4Ht}\rho^{2r_s}Y^\iota \widehat{v}_I Y^\iota \widehat{v}_I\\
\notag&-(\frac{a'}{a}-H)e^{4Ht}\rho^{2r_s}(Y^\iota \widehat{v}_0)^2
+[\frac{1}{2}(3\frac{a'}{a}+\frac{\partial_tv_0}{v_0})-(1-2r_s)H]e^{4Ht}(Y^\iota\widehat{\rho^{2r_s}})^2
\\
=&\,\frac{1}{2}e^{4Ht}\frac{\rho^{2r_s}}{v_0}e_I(Y^\iota\widehat{\rho^{2r_s}}Y^\iota\widehat{v}_I)
-\frac{1}{2}e^{4Ht}\frac{\rho^{2r_s}}{v_0}\partial_t(Y^\iota\widehat{\rho^{2r_s}}Y^\iota\widehat{v}_0)\\
\notag&+\frac{1}{2}e^{4Ht}\rho^{2r_s}\frac{v_C}{v_0}e_C(Y^\iota\widehat{v}_IY^\iota\widehat{v}_I)-\frac{1}{2}e^{4Ht}\rho^{2r_s}\frac{v_C}{v_0}e_C[(Y^\iota\widehat{v}_0)]^2
+\frac{1}{4}e^{4Ht}(1-2r_s)\frac{v_C}{v_0}e_C[(Y^\iota \widehat{\rho^{2r_s}})^2]\\
\notag&+e^{4Ht}\rho^{2r_s}Y^\iota \widehat{v}_0\big\{\frac{1}{2}\frac{\partial_t\rho^{2r_s}}{v_0^2}Y^\iota\widehat{v}_0-\frac{a'}{a}\frac{Y^\iota\widehat{\rho^{2r_s}}}{v_0}+\frac{a'}{a}\frac{\rho^{2r_s}}{v_0^2}Y^\iota\widehat{v}_0\big\}+\frac{1}{2}e^{4Ht}\rho^{2r_s}Y^\iota\widehat{\rho^{2r_s}}\frac{\rho^{2r_s}}{v_0}\frac{\partial_tv_0}{v_0}Y^\iota\widehat{v}_0\\
\notag&+e^{4Ht}\rho^{2r_s}Y^\iota\widehat{v}_I\mathfrak{R}_I^\iota
-e^{4Ht}\rho^{2r_s}Y^\iota\widehat{v}_0\mathfrak{R}_0^\iota
+\frac{1}{2}e^{4Ht}(Y^\iota\widehat{\rho^{2r_s}})\mathfrak{R}^\iota.
\end{align}
Differentiating by parts in $\partial_t$ in the term $-\frac{1}{2}e^{4Ht}\frac{\rho^{2r_s}}{v_0}\partial_t(Y^\iota\widehat{\rho^{2r_s}}Y^\iota\widehat{v}_0)$ and rearranging some terms in the RHS gives
\begin{align}\label{top.Euler.id3}
\notag&\frac{1}{2}\partial_t\big\{
e^{4Ht}\rho^{2r_s}\big[Y^\iota \widehat{v}_I Y^\iota \widehat{v}_I
+\frac{1}{v_0}Y^\iota\widehat{\rho^{2r_s}}Y^\iota\widehat{v}_0
-(Y^\iota \widehat{v}_0)^2\big]\big\}
+\frac{1}{4}(1-2r_s)\partial_t\big[e^{4Ht}(Y^\iota\widehat{\rho^{2r_s}})^2\big]\\
\notag&-\frac{1}{2}e^{2Ht}\partial_t(e^{2Ht}\rho^{2r_s})\big\{
Y^\iota \widehat{v}_I Y^\iota \widehat{v}_I
+\frac{1}{v_0}Y^\iota\widehat{\rho^{2r_s}}Y^\iota\widehat{v}_0
-(Y^\iota \widehat{v}_0)^2\big\}\\
\notag&+(\frac{a'}{a}-H)e^{4Ht}\rho^{2r_s}Y^\iota \widehat{v}_I Y^\iota \widehat{v}_I
-(\frac{a'}{a}-H)e^{4Ht}\rho^{2r_s}(Y^\iota \widehat{v}_0)^2\\
\notag&+[\frac{1}{2}(3\frac{a'}{a}+\frac{\partial_tv_0}{v_0})-(1-2r_s)H]e^{4Ht}(Y^\iota\widehat{\rho^{2r_s}})^2
-(\frac{1}{2}\frac{\partial_t\rho^{2r_s}}{v_0^2}+\frac{a'}{a}\frac{\rho^{2r_s}}{v_0^2})e^{4Ht}\rho^{2r_s}(Y^\iota \widehat{v}_0)^2
\\
=&\,\frac{1}{2}e^{4Ht}\frac{\rho^{2r_s}}{v_0}e_I(Y^\iota\widehat{\rho^{2r_s}}Y^\iota\widehat{v}_I)
+(H-\frac{a'}{a})e^{4Ht}\frac{\rho^{2r_s}}{v_0}Y^\iota\widehat{\rho^{2r_s}}Y^\iota\widehat{v}_0\\
\notag&+\frac{1}{2}e^{4Ht}\rho^{2r_s}\frac{v_C}{v_0}e_C(Y^\iota\widehat{v}_IY^\iota\widehat{v}_I)-\frac{1}{2}e^{4Ht}\rho^{2r_s}\frac{v_C}{v_0}e_C[(Y^\iota\widehat{v}_0)]^2
+\frac{1}{4}e^{4Ht}(1-2r_s)\frac{v_C}{v_0}e_C[(Y^\iota \widehat{\rho^{2r_s}})^2]\\
\notag&+e^{4Ht}\rho^{2r_s}Y^\iota\widehat{v}_I\mathfrak{R}_I^\iota
-e^{4Ht}\rho^{2r_s}Y^\iota\widehat{v}_0\mathfrak{R}_0^\iota
+\frac{1}{2}e^{4Ht}(Y^\iota\widehat{\rho^{2r_s}})\mathfrak{R}^\iota.
\end{align}
After the previous manipulations, it turns out that all terms in the RHS of \eqref{top.Euler.id3} can be bounded in $L^1(\Sigma_t)$ by the total energy times a uniformly (in $c_s^2$) decaying coefficient, after integrating by parts in $e_I,e_C$, and all terms in the LHS which are exactly at the level of the total energy have a favorable sign. Indeed, recall the asymptotic behaviors (Lemma \ref{lem:Linfty.est})
\begin{align*}
\|\frac{\partial_tv_0}{v_0}+H\|_{L^\infty(\Sigma_t)}\leq Ce^{-Ht},\qquad \|\frac{\partial_t(\rho^{2r_s})}{\rho^{2r_s}}+\frac{4r_s}{1-2r_s}H\|_{L^\infty(\Sigma_t)}\leq Ce^{2Ht-\frac{4r_s}{1-2r_s}Ht},
\end{align*}
integrate \eqref{top.Euler.id3} in $\Sigma_t$ and sum in $|\iota|\leq N$ to deduce the energy inequality 
\begin{align}\label{top.Euler.ineq}
\notag&\frac{1}{2}\partial_t\sum_{|\iota|\leq N}\bigg[\int_{\Sigma_t}
e^{4Ht}\rho^{2r_s}\big[Y^\iota \widehat{v}_I Y^\iota \widehat{v}_I
+\frac{1}{v_0}Y^\iota\widehat{\rho^{2r_s}}Y^\iota\widehat{v}_0
-(Y^\iota \widehat{v}_0)^2\big]d\omega
+\frac{1}{2}(1-2r_s)e^{4Ht}\|Y^\iota\widehat{\rho^{2r_s}}\|^2_{\dot{H}^N(\Sigma_t)}\bigg]\\
\notag&+\sum_{|\iota|\leq N}\int_{\Sigma_t}(\frac{2r_s}{1-2r_s}-1)He^{4Ht}\rho^{2r_s}\big\{
Y^\iota \widehat{v}_I Y^\iota \widehat{v}_I
+\frac{1}{v_0}Y^\iota\widehat{\rho^{2r_s}}Y^\iota\widehat{v}_0
-(Y^\iota \widehat{v}_0)^2\big\}d\omega\\
\notag&+2r_sHe^{4Ht}\|\widehat{\rho^{2r_s}}\|_{H^N(\Sigma_t)}^2
+\int_{\Sigma_t}(\frac{2r_s}{1-2r_s}-1)He^{4Ht}\frac{\rho^{4r_s}}{v_0^2}(Y^\iota \widehat{v}_0)^2d\omega
\\
\leq&\,Ce^{4Ht}e^{-\frac{4r_s}{1-2r_s}Ht}\|\widehat{\rho^{2r_s}}\|_{H^N(\Sigma_t)}\|\widehat{v}\|_{H^N(\Sigma_t)}\\
\notag&+\sum_{|\iota|\leq N}\int_{\Sigma_t}Ce^{-Ht}e^{4Ht}\rho^{2r_s}\big|Y^\iota\widehat{v}_IY^\iota\widehat{v}_I-(Y^\iota\widehat{v}_0)^2\big|d\omega
+Ce^{-Ht}e^{4Ht}\|\widehat{\rho^{2r_s}}\|_{H^N(\Sigma_t)}^2\\
\notag&+\sum_{|\iota|\leq N}\int_{\Sigma_t}\big\{e^{4Ht}\rho^{2r_s}Y^\iota\widehat{v}_I\mathfrak{R}_I^\iota
-e^{4Ht}\rho^{2r_s}Y^\iota\widehat{v}_0\mathfrak{R}_0^\iota
+\frac{1}{2}e^{4Ht}(Y^\iota\widehat{\rho^{2r_s}})\mathfrak{R}^\iota\big\} d\omega.
\end{align}
The terms in the last line, containing error terms from the commutation of equations \eqref{v0.Euler.eq}-\eqref{rho.Euler.eq} with $Y^\iota$, can be treated similarly to \eqref{error.fluid.top.est2}, since they all contain factors of order $\rho^{2r_s}$ in terms of decay in powers of $e^{Ht}$.

Moreover, we recall the algebraic relation
\begin{align}\label{Euler.mixed.error}
4-\frac{4r_s}{1-2r_s}=2+(1-2A_s)-1\quad\Rightarrow\quad e^{4Ht}e^{-\frac{4r_s}{1-2r_s}Ht}\|\widehat{\rho^{2r_s}}\|_{H^N(\Sigma_t)}\|\widehat{v}\|_{H^N(\Sigma_t)}\leq e^{-Ht}\mathcal{E}_{tot}(t)
\end{align}
to infer from \eqref{top.Euler.ineq}
\begin{align}\label{top.Euler.ineq2}
\notag&\frac{1}{2}\partial_t\sum_{|\iota|\leq N}\bigg[\int_{\Sigma_t}
e^{4Ht}\rho^{2r_s}\big[Y^\iota \widehat{v}_I Y^\iota \widehat{v}_I
+\frac{1}{v_0}Y^\iota\widehat{\rho^{2r_s}}Y^\iota\widehat{v}_0
-(Y^\iota \widehat{v}_0)^2\big]d\omega
+\frac{1}{2}(1-2r_s)e^{4Ht}\|Y^\iota\widehat{\rho^{2r_s}}\|^2_{\dot{H}^N(\Sigma_t)}\bigg]\\
&+\sum_{|\iota|\leq N}\int_{\Sigma_t}(\frac{2r_s}{1-2r_s}-1)He^{4Ht}\rho^{2r_s}\big\{
Y^\iota \widehat{v}_I Y^\iota \widehat{v}_I
+\frac{1}{v_0}Y^\iota\widehat{\rho^{2r_s}}Y^\iota\widehat{v}_0
-(Y^\iota \widehat{v}_0)^2\big\}d\omega\\
\notag&+2r_sHe^{4Ht}\|\widehat{\rho^{2r_s}}\|^2_{H^N(\Sigma_t)}
+(\frac{2r_s}{1-2r_s}-1)He^{4Ht}\frac{\rho^{4r_s}}{v_0^2}\|Y^\iota \widehat{v}_0\|^2_{H^N(\Sigma_t)}
\\
\notag\leq&\,C(e^{-Ht}+e^{2Ht-\frac{4r_s}{1-2r_s}Ht})\mathcal{E}_{tot}(t)+\sum_{|\iota|\leq N}\int_{\Sigma_t}Ce^{-Ht}e^{4Ht}\rho^{2r_s}\big|Y^\iota\widehat{v}_IY^\iota\widehat{v}_I-(Y^\iota\widehat{v}_0)^2\big|d\omega.
\end{align}
\begin{lemma}\label{lem:Euler.v.en}
The following inequality holds:
\begin{align}\label{Euler.v.en}
\begin{split}
&\int_{\Sigma_t}e^{4Ht}\rho^{2r_s}\big[Y^\iota \widehat{v}_I Y^\iota \widehat{v}_I
+\frac{1}{v_0}Y^\iota\widehat{\rho^{2r_s}}Y^\iota\widehat{v}_0
-(Y^\iota \widehat{v}_0)^2\big]d\omega\\
\ge&\, \int_{\Sigma_t}e^{4Ht}\frac{\rho^{4r_s}}{v_0^2}Y^\iota \widehat{v}_I Y^\iota \widehat{v}_Id\omega-C(\varepsilon+e^{2Ht-\frac{4r_s}{1-2r_s}Ht})\mathcal{E}_{tot}(t)
\end{split}
\end{align}
for all $t\in[T,T_{Boot})$.
\end{lemma}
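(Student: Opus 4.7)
The plan is to reduce the quadratic form inside the integrand to a manifestly non-negative expression by exploiting the constraint $-v_0^2+v_Cv_C=-\rho^{2r_s}$, which relates $\widehat{v}_0$ to $\widehat{v}_C$ and $\widehat{\rho^{2r_s}}$ through the identity \eqref{v0.hat.id.diff}. Differentiating this constraint once gives the pointwise relation
\[
\partial v_0 = \frac{v_C}{v_0}\partial v_C + \frac{1}{2v_0}\partial(\rho^{2r_s}),
\]
for any derivation. Applying $Y^\iota$ and using that $\widetilde{v}_0,\widetilde{v}_I,\widetilde{\rho}^{2r_s}$ are homogeneous (so $Y^\iota v_0 = Y^\iota\widehat{v}_0$, etc.), I obtain
\[
Y^\iota\widehat{v}_0 = \frac{v_C}{v_0}Y^\iota\widehat{v}_C + \frac{1}{2v_0}Y^\iota\widehat{\rho^{2r_s}} + \mathcal{R}^\iota,
\]
where $\mathcal{R}^\iota$ collects commutator terms involving at most $|\iota|-1$ derivatives on $v_C,\rho^{2r_s}$.

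Substituting this expression for $Y^\iota\widehat{v}_0$ into the integrand and expanding, the cross terms involving $Y^\iota\widehat{\rho^{2r_s}}\cdot Y^\iota\widehat{v}_C$ cancel exactly and one is left with
\[
Y^\iota\widehat{v}_IY^\iota\widehat{v}_I - \frac{v_Cv_D}{v_0^2}Y^\iota\widehat{v}_CY^\iota\widehat{v}_D + \frac{1}{4v_0^2}(Y^\iota\widehat{\rho^{2r_s}})^2 + \text{(lin. in $\mathcal{R}^\iota$)}.
\]
By Cauchy--Schwarz (as already used in \eqref{C-S.vIvE}) the first two terms dominate $(1-\tfrac{v_Cv_C}{v_0^2})Y^\iota\widehat{v}_IY^\iota\widehat{v}_I = \tfrac{\rho^{2r_s}}{v_0^2}Y^\iota\widehat{v}_IY^\iota\widehat{v}_I$, so multiplying through by $e^{4Ht}\rho^{2r_s}$ and integrating yields the desired leading term $\int_{\Sigma_t}e^{4Ht}\frac{\rho^{4r_s}}{v_0^2}Y^\iota\widehat{v}_IY^\iota\widehat{v}_I\,d\omega$ plus a manifestly non-negative contribution in $(Y^\iota\widehat{\rho^{2r_s}})^2$ that I simply discard.

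It remains to estimate the error terms coming from $\mathcal{R}^\iota$ and from replacing $Y^\iota\widehat{v}_0$ by its expression above. Each such term is a product $Y^{\iota_1}(\cdots)\cdot Y^{\iota_2}(\cdots)$ with $|\iota_2|\le|\iota|$ but one factor of the form $Y^{\iota_1}\widehat{v}$ or $Y^{\iota_1}\widehat{\rho^{2r_s}}$ with $|\iota_1|\le|\iota|-1$, which can be put in $L^\infty$ via Sobolev embedding (since $N\ge 7$) and controlled by the bootstrap \eqref{Boots} together with Lemma \ref{lem:Linfty.est}. Power-counting the weights $e^{4Ht}\rho^{2r_s}\sim e^{(2-2A_s)Ht}$, $v_0\sim e^{-Ht}$, $v_C\sim e^{-Ht}$ and comparing with the definitions \eqref{fluid.en} of $\mathcal{E}_{fluid.top},\mathcal{E}_{fluid.low}$, every such error contributes at most $C(\varepsilon + e^{2Ht-\frac{4r_s}{1-2r_s}Ht})\mathcal{E}_{tot}(t)$: the factor $\varepsilon$ arises from the $L^\infty$ bootstrap bound on the lower-order factor, while $e^{2Ht-\frac{4r_s}{1-2r_s}Ht}$ (equivalently $e^{-2A_sHt}$) arises whenever a mismatch between the weight $e^{4Ht}\rho^{2r_s}$ and the $e^{2Ht}$ weight in $\mathcal{E}_{fluid.low}$ has to be absorbed.

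The main obstacle is the careful bookkeeping of these error terms: some of them involve $\rho^{2r_s}$-free combinations such as $\frac{v_C}{v_0}$, which are $\mathcal{O}(1)$ in $L^\infty$ and therefore threaten to produce a term of the form $e^{-Ht}e^{2A_sHt}\mathcal{E}_{tot}(t)$ exactly of the type one is trying to avoid in the Euler-on-FLRW setting. I expect to resolve this by noting that those specific combinations only occur paired against $Y^\iota\widehat{\rho^{2r_s}}$ (which carries the favorable weight $e^{4Ht}$, not $e^{4Ht}\rho^{2r_s}$) or, alternatively, by absorbing them into the positive $\tfrac{1}{4v_0^2}(Y^\iota\widehat{\rho^{2r_s}})^2$ term via Young's inequality with a small parameter, ensuring that after pairing we only ever see the two admissible weights $\varepsilon$ and $e^{2Ht-\frac{4r_s}{1-2r_s}Ht}$.
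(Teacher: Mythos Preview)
Your approach is essentially the same as the paper's: differentiate the constraint $v_0^2=v_Cv_C+\rho^{2r_s}$ with $Y^\iota$, solve for $Y^\iota\widehat{v}_0$ modulo a commutator remainder, substitute, and apply Cauchy--Schwarz as in \eqref{C-S.vIvE}. The paper packages the remainder as $\mathfrak{I}^\iota$ satisfying the single bound $e^{2Ht}\|\mathfrak{I}^\iota\|_{L^2(\Sigma_t)}\le C\varepsilon\sqrt{\mathcal{E}_{tot}(t)}$, from which the error estimate follows directly; your final paragraph's worry about $\rho^{2r_s}$-free coefficients producing an $e^{-Ht}e^{2A_sHt}$ loss is unfounded, since every term in $\mathcal{R}^\iota$ already carries at least one spatial derivative of a hatted variable at below-top order (hence the factor $\varepsilon$), and no Young's-inequality absorption into the $(Y^\iota\widehat{\rho^{2r_s}})^2$ term is needed.
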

\begin{proof}
We differentiate the identity $v_0^2=v_Iv_I+\rho^{2r_s}$ in $Y^\iota$, $|\iota|\leq N$:
\begin{align}\label{v02.diff.id}
v_0Y^\iota\widehat{v}_0=v_IY^\iota\widehat{v}_I+\frac{1}{2}Y^\iota\widehat{\rho^{2r_s}}+\mathfrak{I}^\iota
\end{align}
where $\mathfrak{I}^\iota$ satisfies the bound
\begin{align}\label{frak.I.est}
e^{2Ht}\|\mathfrak{I}^\iota\|_{L^2(\Sigma_t)}\leq C\varepsilon\sqrt{\mathcal{E}_{tot}(t)}.
\end{align}
Taking $\frac{1}{2}Y^\iota\widehat{\rho^{2r_s}}$ to the LHS in \eqref{v02.diff.id}, diving by $v_0$ and squaring yields
\begin{align}\label{v02.diff.id2}
(Y^\iota\widehat{v}_0)^2-\frac{1}{v_0}Y^\iota\widehat{\rho^{2r_s}}Y^\iota\widehat{v}_0+\frac{1}{4}\frac{1}{v_0^2}(Y^\iota\widehat{\rho^{2r_s}})^2=\frac{v_Iv_C}{v_0^2}Y^\iota\widehat{v}_IY^\iota\widehat{v}_C+\frac{2}{v_0^2}v_IY^\iota v_I\mathfrak{I}^\iota+\frac{1}{v_0^2}(\mathfrak{I}^\iota)^2.
\end{align}
Hence, we have 
\begin{align}\label{Euler.v.en2}
\notag&\int_{\Sigma_t}e^{4Ht}\rho^{2r_s}\big[Y^\iota \widehat{v}_I Y^\iota \widehat{v}_I
+\frac{1}{v_0}Y^\iota\widehat{\rho^{2r_s}}Y^\iota\widehat{v}_0
-(Y^\iota \widehat{v}_0)^2\big]d\omega\\
=&\int_{\Sigma_t}\big\{e^{4Ht}\rho^{2r_s}\big[Y^\iota \widehat{v}_I Y^\iota \widehat{v}_I
-\frac{v_Iv_C}{v_0^2}Y^\iota\widehat{v}_IY^\iota\widehat{v}_C\big]
+\frac{1}{4}\frac{\rho^{2r_s}}{v_0^2}e^{4Ht}(Y^\iota\widehat{\rho^{2r_s}})^2\big\}d\omega\\
\notag&-\int_{\Sigma_t}e^{4Ht}\rho^{2r_s}\big[\frac{2}{v_0^2}v_IY^\iota v_I\mathfrak{I}^\iota+\frac{1}{v_0^2}(\mathfrak{I}^\iota)^2\big]d\omega\\
\tag{by Cauchy-Schwarz and \eqref{frak.I.est}}\geq&\,\int_{\Sigma_t}e^{4Ht}\frac{\rho^{4r_s}}{v_0^2}Y^\iota \widehat{v}_I Y^\iota \widehat{v}_Id\omega-C(\varepsilon+e^{2Ht-\frac{4r_s}{1-2r_s}Ht})\sqrt{\mathcal{E}_{tot}(t)}
\end{align}
as desired.
\end{proof}
Next, we apply Lemma \ref{lem:Euler.v.en} to \eqref{top.Euler.ineq2} and recall that $e^{4Ht}\frac{\rho^{4r_s}}{v_0^2}\sim e^{-4A_sHt}e^{2Ht}$ to deduce the energy inequality
\begin{align}\label{top.Euler.ineq3}
\notag&e^{-4A_sHt}e^{2Ht}\|\widehat{v}\|^2_{H^N(\Sigma_t)}+e^{4Ht}\|Y^\iota\widehat{\rho^{2r_s}}\|^2_{H^N(\Sigma_t)}\\
&+\int^t_Te^{-4A_sH\tau}e^{2H\tau}\|\widehat{v}\|^2_{H^N(\Sigma_\tau)}+e^{4H\tau}\|Y^\iota\widehat{\rho^{2r_s}}\|^2_{H^N(\Sigma_\tau)}d\tau
\\
\notag\leq&\,C\mathcal{E}_{tot}(T)+C(\varepsilon+e^{2Ht-\frac{4r_s}{1-2r_s}Ht})\mathcal{E}_{tot}(t)+\int^t_TC(\varepsilon+e^{-H\tau}+e^{2H\tau-\frac{4r_s}{1-2r_s}H\tau})\mathcal{E}_{tot}(\tau)d\tau
\end{align}
for all $t\in[T,T_{Boot})$. Taking $\varepsilon$ sufficiently small and $T$ sufficiently large allows us to absorb the last two terms in the LHS. Using as well the lower order estimate \eqref{fluid.low.est}, we obtain the inequality
\begin{align}\label{Euler.tot.en.est}
\mathcal{E}_{tot}(t)\leq C\mathcal{E}_{tot}(T)+C\int^t_T(e^{-H\tau}+e^{2H\tau-\frac{4r_s}{1-2r_s}H\tau})\mathcal{E}_{tot}(\tau)d\tau.
\end{align}
Thus, Gronwall's inequality applies for all $c_s^2\in(\frac{1}{3},1)$, giving a uniform bound of the total energy, $\mathcal{E}_{tot}(t)\leq C\mathcal{E}_{tot}(T)$, which can be used to close the bootstrap argument and conclude the future stability of the background homogeneous solutions to Euler's equations with extreme tilt, for all sound speeds beyond radiation.

\end{document}